\documentclass[12pt,reqno]{amsart}

\usepackage[english]{babel}
\usepackage[T1]{fontenc}
\usepackage[utf8]{inputenc}
\usepackage{amsmath,amsfonts,amssymb,amsthm}
\usepackage{cite}
\usepackage{url}
\usepackage{lmodern}
\usepackage{enumitem}
\usepackage[hidelinks]{hyperref}
\usepackage{geometry}

\numberwithin{equation}{section}

\DeclareMathOperator{\Der}{Der}
\DeclareMathOperator{\LocDer}{LocDer}

\newtheorem{theorem}{Theorem}[section]
\newtheorem{proposition}[theorem]{Proposition}
\newtheorem{lemma}[theorem]{Lemma}
\newtheorem{corollary}[theorem]{Corollary}

\theoremstyle{definition}
\newtheorem{definition}[theorem]{Definition}
\newtheorem{remark}[theorem]{Remark}

\newcommand{\C}{\mathbb C}

\newcommand{\Id}{\operatorname{id}}

\setlist[enumerate]{label=(\arabic*),topsep=3pt,itemsep=0pt}
\allowdisplaybreaks[1]
\DeclareMathOperator{\Aut}{Aut}
\DeclareMathOperator{\LocAut}{LocAut}
\DeclareMathOperator{\Inn}{Inn}

\title[Local maps on nilpotent Lie and Leibniz algebras]{Local Derivations and Local Automorphisms of Arbitrary
Finite-Dimensional Nilpotent Lie and Leibniz Algebras}

\author[Kh. Atajonov]{Khusainboy Atajonov$^1$}
\author[A. Khudoyberdiyev]{Abror Khudoyberdiyev$^{1,2}$}
\author[B. Yusupov]{Bakhtiyor Yusupov$^{2,3}$}

\address{$^1$ National University of Uzbekistan, 4 University Street,
Tashkent 100174, Uzbekistan}

\address{$^2$ V.I. Romanovskiy Institute of Mathematics, Academy of Sciences
of Uzbekistan, 9 University Street, Tashkent 100174, Uzbekistan}

\address{$^3$ Department of Applied Mathematics and Mathematical Physics, Urgench State University, Urgench, Uzbekistan}

\email{atajonovxusainboy@gmail.com}
\email{khabror@mail.ru}
\email{baxtiyor\_yusupov\_93@mail.ru}

\begin{document}
\begin{abstract}
We study local derivations and local automorphisms of
finite-dimensional nilpotent complex Lie and right Leibniz
algebras. We prove that every non-abelian Lie algebra in this
class admits both a local derivation which is not a derivation
and a local automorphism which is not an automorphism.
The constructions use a derivation $D$ satisfying
$0\ne D([L,L])\subseteq Z(L)$.
For non-abelian right Leibniz algebras, both existence statements
hold whenever the dimension is at most seven or the nilpotency
class is at most four. We construct an eight-dimensional
non-Lie algebra of class five satisfying
$\LocDer(A)=\Der(A)$ and $\LocAut(A)=\Aut(A)$.
Consequently, eight and five are the least possible dimension
and nilpotency class, respectively, for either equality.
\end{abstract}
\maketitle
\noindent\textbf{Keywords:}
nilpotent Lie algebra; nilpotent right Leibniz algebra;
local derivation; local automorphism.

\noindent\textbf{2020 Mathematics Subject Classification:}
Primary 17B40; Secondary 17B30, 17A32, 17A36.

\section{Introduction}\label{sec:introduction}

Local derivations and local automorphisms describe maps whose values
at individual elements agree with those of derivations and automorphisms,
respectively. More precisely, a linear map $\Delta:L\to L$ is a local
derivation if, for every $x\in L$, there exists a derivation $D_x$ such
that $\Delta(x)=D_x(x)$. Local automorphisms are defined similarly,
with automorphisms in place of derivations. The witnessing maps may
depend on $x$, and the local map need not itself be a derivation or
an automorphism. We call such a local map proper. These notions
originate in the work of Kadison~\cite{kadison1990} and Larson and
Sourour~\cite{larson1990} on operator algebras.

For Lie algebras, the relation between local and global maps depends
on the algebraic structure. Zhao and Wang~\cite{zhao2015triangular}
studied local derivations and local automorphisms of upper triangular
matrix Lie algebras over commutative rings. Wang and
Wang~\cite{wang2019antisymmetric} considered local derivations of
Lie algebras of antisymmetric matrices. Over algebraically closed
fields of characteristic zero, Ayupov and
Kudaybergenov~\cite{ayupov2016} proved that every local derivation
of a finite-dimensional semisimple Lie algebra is a derivation,
and exhibited nilpotent Lie algebras admitting proper local
derivations. Further rigidity results were obtained for standard
Borel subalgebras~\cite{yu2020} and solvable Lie algebras of maximal
rank~\cite{kudaybergenov2022}. For general solvable Lie algebras,
both rigidity and the existence of proper local derivations
occur~\cite{ayupov2021}. Related results for Witt algebras, Witt
algebras in prime characteristic, and conformal Galilei algebras
were established in~\cite{chen2022witt,yao2022witt,alauadinov2024galilei}.

The automorphism problem requires a separate analysis. Following
the investigation of local automorphisms on Lie and Leibniz
algebras in~\cite{ayupov2018}, Costantini~\cite{costantini2019}
proved that every local automorphism of a finite-dimensional
simple Lie algebra over an algebraically closed field of
characteristic zero is either an automorphism or an
anti-automorphism. By contrast, every local automorphism of
a complex solvable Lie algebra of maximal rank is an
automorphism~\cite{kudaybergenov2024}. Chen, Zhao and
Zhao~\cite{chen2021wittaut} obtained rigidity results for invertible
local automorphisms of simple generalized Witt algebras.

Local derivations have also been studied with values in modules.
Wang, Tang and Li~\cite{wang2025sl2modules} described local
derivations of $\mathfrak{sl}(2)$ with values in finite-dimensional
completely reducible modules. Zhou, Wang and
Liu~\cite{zhou2025modularsl2} considered local derivations of the
modular Lie algebra $\mathfrak{sl}(2)$ with values in simple
modules. These results exhibit the role of the target module
in the distinction between local derivations and derivations.

The corresponding questions extend to Lie superalgebras.
Local superderivations of basic classical Lie superalgebras and
of the strange Lie superalgebra $\mathfrak{q}(n)$ were studied
in~\cite{chen2017classicalsuper,chen2018qn}, while the Cartan type
case was treated in~\cite{yuan2021cartan}. Wu, Gao, Liu and
Ye~\cite{wu2024supervirasoro} investigated local derivations and
local automorphisms of super Virasoro algebras. Sheng, Liu and
Liu~\cite{sheng2024filiformsuper} described local automorphisms
and local superderivations of model filiform Lie superalgebras.
Beyond the Lie setting, local derivations and local automorphisms
of Cayley algebras were described
in~\cite{ayupov2023cayley}, and local derivations of filiform
associative algebras were studied
in~\cite{abdurasulov2025associative}.

The immediate starting point of the present paper is the work
of Khudoyberdiyev and Jumaniyozov~\cite{khudoyberdiyev2025}
on nilpotent Lie algebras. They established the existence of
proper local derivations and proper local automorphisms for
nilpotency indices three and four, and obtained a sufficient
condition for both existence statements for algebras of higher
nilpotency index. Their applications include two-generated nilpotent
Lie algebras. With the convention used here, their nilpotency
indices three and four correspond to nilpotency classes two
and three. This raises the question of whether the additional
hypotheses can be removed.

We prove that every finite-dimensional non-abelian nilpotent
complex Lie algebra admits both a proper local derivation and
a proper local automorphism. Our results therefore include the
complex existence results of~\cite{khudoyberdiyev2025}, without
restrictions on the nilpotency class or the number of generators.
Consequently, among finite-dimensional nilpotent complex Lie
algebras, each of the equalities
$\LocDer(L)=\Der(L)$ and $\LocAut(L)=\Aut(L)$ is equivalent
to abelianity.

The main structural step is the construction of a derivation
$D\in\Der(L)$ such that
\[
0\ne D(L^2)\subseteq Z(L).
\]
Here $L^2=[L,L]$. Composing $D$ with a projection onto $L^2$
produces a proper local derivation with central image.
For class two, the auxiliary derivation is constructed directly.
In higher nilpotency classes, we use the centraliser of $L^2$
and compositions of inner derivations. A composition property
preserves the derivation identity, and maximality of a composition
that remains nonzero on $L^2$ forces its image there to be central.
The same auxiliary derivation also yields proper local
automorphisms through a separate construction of invertible
bracket-preserving witnesses.

Leibniz algebras provide a natural setting in which to examine
the scope of these conclusions. They generalise Lie algebras by
allowing a non-skew-symmetric bracket. In the right Leibniz
convention used in this paper, every right multiplication is
a derivation. Local maps have been investigated for several
classes of Leibniz algebras. Ayupov, Kudaybergenov and
Omirov~\cite{ayupov2020simple} proved rigidity of local derivations
on complex simple Leibniz algebras and established the existence
of proper local derivations on filiform Leibniz algebras of
dimension at least three. The semisimple case was treated by
Kaygorodov, Kudaybergenov and Yuldashev~\cite{kaygorodov2022leibniz}.
For solvable Leibniz algebras, local derivations were studied
in~\cite{ayupov2020solvable}, and local automorphisms for several
nilradicals were investigated
in~\cite{arzikulov2022leibnizaut,karimjanov2024leibnizaut}.

In the nilpotent Leibniz setting, local derivations of
$p$-filiform algebras were described
in~\cite{ayupov2020pfiliform}, and local automorphisms of some
$p$-filiform families were studied
in~\cite{yusupov2025pfiliformaut}. Local derivations and local
automorphisms of direct sums of null-filiform Leibniz algebras
were investigated in~\cite{adashev2022}. These results for
particular nilpotent families, together with our general Lie
theorems, lead to the following question: does every
finite-dimensional non-abelian nilpotent right Leibniz algebra
admit proper local derivations and proper local automorphisms?
The absence of skew-symmetry prevents a direct application
of the Lie arguments; in particular, $[A^i,A^j]$ and
$[A^j,A^i]$ need not coincide.

We show that both existence statements hold for every
finite-dimensional non-abelian nilpotent complex right
Leibniz algebra whose dimension is at most seven or whose
nilpotency class is at most four. They fail in general:
we exhibit an eight-dimensional non-Lie right Leibniz
algebra $A_8$ of class five satisfying
\[
\LocDer(A_8)=\Der(A_8),\qquad
\LocAut(A_8)=\Aut(A_8).
\]
Thus eight is the least possible dimension and five is the
least possible nilpotency class for either equality among
non-abelian nilpotent complex right Leibniz algebras.
The same algebra attains both bounds. The two rigidity
equalities are proved separately, using explicit descriptions
of the derivations and automorphisms of $A_8$.

The paper is organised as follows. Section~2 introduces
the notation and preliminary facts. Sections~3 and~4 prove
the existence theorems for local derivations and local
automorphisms of nilpotent Lie algebras. Section~5 treats
local derivations of nilpotent right Leibniz algebras and
establishes the sharp dimension and nilpotency-class bounds.
Section~6 proves the corresponding results for local
automorphisms.

\section{Preliminaries}\label{sec:preliminaries}

Throughout, all algebras are finite-dimensional over $\C$.
A right Leibniz algebra is a vector space $L$ equipped with
a bilinear bracket satisfying
\[
[x,[y,z]]=[[x,y],z]-[[x,z],y]
\qquad(x,y,z\in L).
\]
A right Leibniz algebra is a Lie algebra if $[x,x]=0$
for every $x\in L$. In this case, the bracket is
skew-symmetric and satisfies the Jacobi identity.

For subspaces $U,V\subseteq L$, the notation $[U,V]$
means the linear span of their brackets. We use the
lower central series
\[
L^1=L,\qquad L^{j+1}=[L^j,L]\quad(j\ge1).
\]
In particular, $L^2=[L,L]$ is the derived algebra.
For Lie algebras, this agrees with the convention
$L^{j+1}=[L,L^j]$.

The algebra is nilpotent of class $s$ if $L^s\ne0$
and $L^{s+1}=0$. In this convention, the nilindex used
in~\cite{khudoyberdiyev2025} is $s+1$. In particular,
their nilindices three and four correspond to classes
two and three here. A nonzero abelian algebra has
class one. We do not assign a nilpotency class to
the zero algebra.

The two-sided centre of $L$ is
\[
Z(L)=\{z\in L:[z,L]=[L,z]=0\}.
\]
For a nilpotent algebra of class $s$, we have
$L^s\subseteq Z(L)$. Each term of the lower central
series is an ideal, and $L^{j+1}\subseteq L^j$.

When $L$ is a Lie algebra, the centralizer of
a subspace $U\subseteq L$ is
\[
C_L(U)=\{z\in L:[z,U]=0\}.
\]

\begin{definition}
A linear map $D:L\to L$ is a derivation if
\[
D([a,b])=[D(a),b]+[a,D(b)]
\qquad(a,b\in L).
\]
The space of derivations is denoted by $\Der(L)$.
For a right Leibniz algebra $L$, every right multiplication
$R_c:x\mapsto[x,c]$ is a derivation. These are called
inner derivations, and their space is denoted by
$\Inn(L)=\{R_c:c\in L\}$.
When $L$ is a Lie algebra, we also use the notation
$(\operatorname{ad}c)(x)=[c,x]$; thus
$\operatorname{ad}c=-R_c$.
A linear map $\Delta:L\to L$ is a local derivation if,
for every $x\in L$, there is $D_x\in\Der(L)$ such that
$\Delta(x)=D_x(x)$. We denote this space by $\LocDer(L)$.
A local derivation is proper if it is not a derivation.
\end{definition}

\begin{definition}
An automorphism of $L$ is a bijective linear map $A:L\to L$
satisfying
\[
A([a,b])=[A(a),A(b)]
\qquad(a,b\in L).
\]
Its group is denoted by $\Aut(L)$.
A linear map $\Phi:L\to L$ is a local automorphism if,
for every $x\in L$, there is $A_x\in\Aut(L)$ such that
$\Phi(x)=A_x(x)$. We denote their set by $\LocAut(L)$
and call $\Phi$ proper when it is not an automorphism.
\end{definition}

Every local automorphism in finite dimension is bijective:
if $\Phi(x)=0$, a witnessing automorphism gives $A_x(x)=0$,
hence $x=0$. Injectivity then implies surjectivity.
We always have
\[
\Der(L)\subseteq\LocDer(L),\qquad
\Aut(L)\subseteq\LocAut(L).
\]

\begin{lemma}\label{lem:central-maps}
Let $T:L\to Z(L)$ be a linear map. Then:
\begin{itemize}
\item $T$ is a derivation if and only if $T(L^2)=0$.
\item If $f\in L^*$ vanishes on $L^2$ and $z\in Z(L)$,
then the map $T$ defined by $T(x)=f(x)z$ is a derivation.
If, in addition, $f(z)=0$, then $T^2=0$ and $\Id+T$
is an automorphism with inverse $\Id-T$.
\end{itemize}
\end{lemma}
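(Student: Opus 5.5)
The plan is to verify both items directly from the definitions, using only that $T$ takes values in the two-sided centre $Z(L)$. That centrality makes every bracket with a value of $T$ vanish, which reduces the derivation identity to a single condition.

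For the first item, fix $a,b\in L$. Since $T(a),T(b)\in Z(L)$, we have $[T(a),b]=0$ and $[a,T(b)]=0$. The derivation identity for $T$ therefore reads $T([a,b])=0$ for all $a,b$. As $L^2$ is spanned by the brackets $[a,b]$ and $T$ is linear, this is equivalent to $T(L^2)=0$.

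For the second item, $T(x)=f(x)z$ is linear and takes values in $\C z\subseteq Z(L)$. It vanishes on $L^2$ because $f$ does, so it is a derivation by the first item. Now assume $f(z)=0$. Then
\[
T^2(x)=f(x)T(z)=f(x)f(z)z=0,
\]
so $(\Id+T)(\Id-T)=\Id-T^2=\Id$, and similarly in the other order. Hence $\Id+T$ is bijective with inverse $\Id-T$.

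It remains to check that $\Id+T$ preserves brackets. Expanding bilinearly gives
\[
[a+T(a),\,b+T(b)]=[a,b]+[T(a),b]+[a,T(b)]+[T(a),T(b)]=[a,b],
\]
since every term involving a value of $T$ is a bracket with a central element. On the other hand,
\[
(\Id+T)([a,b])=[a,b]+T([a,b])=[a,b],
\]
because $[a,b]\in L^2$ and $T(L^2)=0$. The two sides agree, so $\Id+T$ is an automorphism.

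No step is a real obstacle. The only care needed is to use the two-sided centre, so that both $[T(a),b]$ and $[a,T(b)]$ vanish in the right Leibniz setting, where the bracket need not be skew-symmetric.
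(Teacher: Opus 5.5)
Your proof is correct and matches the paper's argument step for step. In both, centrality of $T(L)$ kills $[T(a),b]+[a,T(b)]$, which reduces the derivation identity to $T(L^2)=0$. The automorphism claim then follows from $T^2=0$ together with the expansion $[a+T(a),b+T(b)]=[a,b]=(\Id+T)([a,b])$.
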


\begin{proof}
Since $T(L)\subseteq Z(L)$, for all $a,b\in L$ we have
\[
[T(a),b]+[a,T(b)]=0.
\]
Thus $T$ is a derivation if and only if
\[
T([a,b])=0\qquad\text{for all }a,b\in L.
\]
Since $L^2=[L,L]$ is spanned by brackets and $T$ is linear,
this is equivalent to $T(L^2)=0$.

Now let $f\in L^*$ satisfy $f(L^2)=0$, let $z\in Z(L)$,
and define $T(x)=f(x)z$. Then $T(L)\subseteq Z(L)$ and
$T(L^2)=0$, so the first assertion shows that $T$ is
a derivation.

If, in addition, $f(z)=0$, then for every $x\in L$,
\[
T^2(x)=T(f(x)z)=f(x)f(z)z=0.
\]
Hence $T^2=0$, and
\[
(\Id+T)(\Id-T)=(\Id-T)(\Id+T)=\Id.
\]
Therefore $\Id+T$ is bijective with inverse $\Id-T$.
Moreover, centrality of $T(L)$ and the equality $T(L^2)=0$
give
\[
[(\Id+T)(a),(\Id+T)(b)]
=[a+T(a),b+T(b)]
=[a,b]
=(\Id+T)([a,b]).
\]
Thus $\Id+T$ preserves the bracket and is an automorphism.
\end{proof}

The constructions of local derivations and local automorphisms
below use a projection onto $L^2$ and, for each $x\notin L^2$,
a linear functional vanishing on $L^2$ and taking the value
$1$ at $x$. We record these elementary facts in the following
remark.

\begin{remark}
For each vector-space complement $V$ of $L^2$, write
$L=V\oplus L^2$ and let $\pi:L\to L^2$ be the corresponding
projection, also viewed as an endomorphism of $L$.
Then $\pi^2=\pi$ and $\pi|_{L^2}=\Id_{L^2}$.
If $x\notin L^2$, a functional on $L/L^2$ supplies
$f\in L^*$ with $f(L^2)=0$ and $f(x)=1$.
\end{remark}

\section{Local derivations}\label{subsec:lie-local-derivations}

In this section, we prove that every finite-dimensional non-abelian
nilpotent complex Lie algebra admits a local derivation which is not
a derivation. Our aim is to construct such a map with central image
and nonzero restriction to the derived algebra. By
Lemma~\ref{lem:central-maps}, these two properties ensure that the
map is not a derivation; the remaining task is to verify locality.

The construction starts with a derivation
$D\in\Der(L)$ satisfying
\begin{equation}\label{eq:nilpotent-central-criterion}
0\ne D(L^2)\subseteq Z(L).
\end{equation}
Composing $D$ with a projection onto $L^2$ produces a linear map
with the required properties. We will show that this map is a local
derivation by constructing a suitable derivation at each element
of $L$. Thus, the essential step is to establish the existence
of $D$; merely requiring $D\ne0$ is insufficient, since a nonzero
derivation may vanish on $L^2$.

We prove this existence separately for nilpotency classes $s=2$
and $s\ge3$. The first case admits a direct construction, while
the second uses the centralizer of $L^2$ and compositions of inner
derivations.

\begin{lemma}\label{lem:class-two-central-derivation}
Every finite-dimensional complex nilpotent Lie algebra of class two
admits a derivation satisfying \eqref{eq:nilpotent-central-criterion}.
\end{lemma}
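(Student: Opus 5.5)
For class two we have $L^2\subseteq Z(L)$ and $L^2\ne0$, so any derivation with $D(L^2)\ne0$ and $D(L^2)\subseteq Z(L)$ will do. The natural candidate is a grading derivation. We fix a vector-space complement $V$ of $L^2$, so $L=V\oplus L^2$, and define $D$ by $D(v)=v$ for $v\in V$ and $D(w)=2w$ for $w\in L^2$.

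The first step is to check that $D$ is a derivation. Since $L^2$ is central, brackets involving $L^2$ vanish, and for $a=v_1+w_1$, $b=v_2+w_2$ we get $[a,b]=[v_1,v_2]\in L^2$. Hence $D([a,b])=2[v_1,v_2]$. On the other side,
\[
[D(a),b]+[a,D(b)]=[v_1+2w_1,v_2+w_2]+[v_1+w_1,v_2+2w_2]=2[v_1,v_2],
\]
again because the $w_i$ are central. So $D\in\Der(L)$.

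The second step is immediate: $D(L^2)=2L^2=L^2$, which is nonzero because the class is two, and $L^2\subseteq Z(L)$ holds in class two. Indeed, $L^3=[L^2,L]=0$, and $[L,L^2]=0$ as well since the bracket is skew-symmetric. Thus $D$ satisfies \eqref{eq:nilpotent-central-criterion}.

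I do not expect a real obstacle here. The only point needing care is that both $[L^2,L]$ and $[L,L^2]$ vanish, and this follows from skew-symmetry in the Lie case. One could equally take $D=\pi$, the projection onto $L^2$ along $V$; the check is the same, since $\pi([a,b])=[a,b]$ while $[\pi a,b]+[a,\pi b]=0$. That fails, however, because $[a,b]\ne0$ in general, which is why the grading derivation with weights $1$ and $2$ is the correct choice.
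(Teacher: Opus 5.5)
Your proof is correct and is the paper's argument: your map with weight $1$ on $V$ and weight $2$ on $L^2$ is exactly $D=\Id+\pi$, and you verify the derivation identity the same way, using centrality of $L^2$. Your closing aside about $D=\pi$ is muddled: it first says the check is the same and then correctly notes that $\pi$ is not a derivation, as Lemma~\ref{lem:central-maps} confirms since $\pi(L^2)=L^2\ne0$. It plays no role in the proof, so you can simply delete it.
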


\begin{proof}
Since $L$ has nilpotency class $s=2$, we have $L^2\ne0$
and $[L,L^2]=L^3=0$. Hence $L^2\subseteq Z(L)$. Choose a complement $V$ of $L^2$
and let $\pi:L\to L^2$ be the corresponding projection.
Set $D=\Id+\pi$. Since $\pi|_{L^2}=\Id_{L^2}$ and $\pi(L)$ is central,
\[
D([a,b])=2[a,b]
=[a+\pi(a),b]+[a,b+\pi(b)]
=[D(a),b]+[a,D(b)]
\]
for all $a,b\in L$. Thus $D$ is a derivation and $D(L^2)=L^2$. Since $L^2\ne0$ and
$L^2\subseteq Z(L)$, $D$ satisfies \eqref{eq:nilpotent-central-criterion}.
\end{proof}

For $s\ge3$, we use the following composition property.

\begin{lemma}\label{lem:centraliser-composition}
Let $L$ be a Lie algebra and set
$C=C_L(L^2)=\{u\in L:[u,L^2]=0\}.$
Then $C$ is an ideal of $L$. Moreover, if $D\in\Der(L)$
satisfies $D(L)\subseteq C$, then $\operatorname{ad}c\circ D$
is a derivation with image in $C$ for every $c\in L$.
\end{lemma}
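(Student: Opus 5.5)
The plan is to prove the two assertions separately, using only the Jacobi identity and the derivation rule.

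\emph{$C$ is an ideal.} Let $u\in C$, $x\in L$ and $w\in L^2$; we must show $[[x,u],w]=0$. By the Jacobi identity,
\[
[[x,u],w]=[x,[u,w]]-[u,[x,w]].
\]
The first term vanishes because $[u,w]=0$. The second vanishes because $[x,w]\in L^2$ (as $L^2$ is an ideal) and $u$ centralises $L^2$. Hence $[L,C]\subseteq C$, and skew-symmetry gives $[C,L]\subseteq C$.

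\emph{The composition property.} Fix $c\in L$ and put $E=\operatorname{ad}c\circ D$. The image of $E$ lies in $[c,C]\subseteq C$ because $C$ is an ideal. For the derivation identity, we expand, for $a,b\in L$,
\[
E([a,b])=[c,[D(a),b]]+[c,[a,D(b)]].
\]
Applying Jacobi to each term gives
\[
E([a,b])=[E(a),b]+[a,E(b)]+[D(a),[c,b]]+[[c,a],D(b)].
\]
It therefore suffices to show that the two error terms cancel. Both are brackets of an element of $D(L)\subseteq C$ with an element of $L^2$: indeed $[c,b]\in L^2$ and $[c,a]\in L^2$. Since $C$ centralises $L^2$, each error term is zero on its own, not merely their sum. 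This is exactly where the hypothesis $D(L)\subseteq C$ enters.

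The only step needing care is getting the signs right in the two Jacobi expansions. Even that is harmless here, since each correction term vanishes separately, so I expect no real obstacle.
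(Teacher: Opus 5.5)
Your proof is correct and follows the paper's argument essentially line for line. The same Jacobi expansion $[[x,u],w]=[x,[u,w]]-[u,[x,w]]$ shows that $C$ is an ideal, and the same expansions of $[c,[D(a),b]]$ and $[c,[a,D(b)]]$ produce the terms $[D(a),[c,b]]$ and $[[c,a],D(b)]$, each of which vanishes separately because $D(L)\subseteq C$ and $[c,a],[c,b]\in L^2$; the image containment $[c,D(L)]\subseteq C$ then follows from $C$ being an ideal.
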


\begin{proof}
For $x\in L$, $y\in C$, and $w\in L^2$, the Jacobi identity gives
\[
[[x,y],w]=[x,[y,w]]-[y,[x,w]]=0,
\]
since $[y,w]=0$ and $[x,w]\in L^2$. Thus $[x,y]\in C$,
so $C$ is an ideal.

Fix $c\in L$ and set $F=\operatorname{ad}c\circ D$.
For all $a,b\in L$, we have
\begin{align*}
F([a,b])
&=[c,[D(a),b]]+[c,[a,D(b)]]\\
&=[[c,D(a)],b]+[D(a),[c,b]]+[[c,a],D(b)]+[a,[c,D(b)]]\\
&=[F(a),b]+[a,F(b)].
\end{align*}
The middle terms vanish because $D(a),D(b)\in C$ and
$[c,a],[c,b]\in L^2$. Hence $F$ is a derivation, and
\[
F(L)=[c,D(L)]\subseteq[L,C]\subseteq C.
\]
\end{proof}

\begin{lemma}\label{lem:class-three-plus-central-derivation}
Every finite-dimensional complex nilpotent Lie algebra of class
$s\ge3$ admits a derivation satisfying
\eqref{eq:nilpotent-central-criterion}. This derivation can be chosen
as a composition of inner derivations with image in $C_L(L^2)$.
\end{lemma}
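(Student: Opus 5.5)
The plan is to build the required derivation as $F=\operatorname{ad}c_k\circ\cdots\circ\operatorname{ad}c_1\circ\operatorname{ad}c_0$. The starting map $\operatorname{ad}c_0$ is already a derivation with image in $C=C_L(L^2)$ and is nonzero on $L^2$. We then keep composing with inner derivations for as long as the result stays nonzero on $L^2$. Lemma~\ref{lem:centraliser-composition} guarantees that each composite is again a derivation with image in $C$, and nilpotency forces the process to stop. Maximality then gives centrality of $F(L^2)$.

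\emph{Step 1: choice of $c_0$.} Observe that $L^{s-1}\subseteq C$, because $[L^{s-1},L^2]\subseteq L^{s+1}=0$ by the standard inclusion $[L^i,L^j]\subseteq L^{i+j}$. Let $m$ be the least index with $L^m\subseteq C$, so $m\le s-1$.

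If $m=1$, then $[L,L^2]=0$, i.e.\ $L^3=0$, which contradicts $s\ge3$. Hence $m\ge2$, and $L^{m-1}\not\subseteq C$. Choose $c_0\in L^{m-1}$ with $[c_0,L^2]\ne0$ and put $D_0=\operatorname{ad}c_0$. This is an inner derivation, with
\[
D_0(L)=[c_0,L]\subseteq L^m\subseteq C,
\qquad D_0(L^2)\ne0 .
\]

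\emph{Step 2: iteration.} Suppose $D$ is a composition of inner derivations, $D\in\Der(L)$, $D(L)\subseteq C$ and $D(L^2)\ne0$. If $D(L^2)\not\subseteq Z(L)$, pick $y\in L$ with $[y,D(L^2)]\ne0$. By Lemma~\ref{lem:centraliser-composition}, $\operatorname{ad}y\circ D$ is again a derivation with image in $C$, and by the choice of $y$ it is still nonzero on $L^2$.

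\emph{Step 3: termination.} Any composition of $k$ inner derivations maps $L$ into $L^{k+1}$. Therefore a composition of more than $s$ inner derivations vanishes, so the iteration cannot continue forever. Take a composition $F$ of maximal length among those produced with $F(L^2)\ne0$. Maximality means $[y,F(L^2)]=0$ for every $y\in L$, and hence $F(L^2)\subseteq Z(L)$. Together with $F(L^2)\ne0$, this is exactly \eqref{eq:nilpotent-central-criterion}, and $F$ is a composition of inner derivations with image in $C_L(L^2)$.

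The main obstacle is Step~1, because it needs a starting derivation that is both nonzero on $L^2$ and has image in $C$. The naive choices fail. Taking $c_0\in L^{s-1}$ kills $L^2$, and taking $c_0\in L^{s-2}$ may also kill $L^2$, since $[L^{s-2},L^2]$ can vanish when $s\ge4$. Choosing $c_0$ via the minimal index $m$ resolves this.
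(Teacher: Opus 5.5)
Your proposal is correct and follows the paper's proof essentially step for step. Your least index $m$ with $L^m\subseteq C_L(L^2)$ is exactly the paper's $r+1$, where $r$ is the largest index with $L^r\not\subseteq C_L(L^2)$, and your nilpotency-termination plus maximal-length argument forcing $F(L^2)\subseteq Z(L)$ is the same as the paper's; the only cosmetic difference is that you bound $m\le s-1$ via $[L^{s-1},L^2]\subseteq L^{s+1}=0$, whereas the paper only uses $L^s\subseteq Z(L)\subseteq C_L(L^2)$.
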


\begin{proof}
Set $C=C_L(L^2)$. Since $s\ge3$, we have $[L,L^2]=L^3\ne0$,
so $C\ne L$. On the other hand, $L^s\subseteq Z(L)\subseteq C$.
Thus there is a largest integer $r\ge1$ such that
$L^r\not\subseteq C$, and its maximality gives
$L^{r+1}\subseteq C$.

Choose $y\in L^r\setminus C$ and set $D_0=\operatorname{ad}y$.
Then $D_0$ is an inner derivation and
\[
D_0(L)=[y,L]\subseteq L^{r+1}\subseteq C.
\]
Since $y\notin C$, there exists $w\in L^2$ with $[y,w]\ne0$.
Hence $D_0(L^2)\ne0$. Since $D_0\in\Der(L)$ and $D_0(L)\subseteq C$,
$D_0$ satisfies the hypotheses of
Lemma~\ref{lem:centraliser-composition}.

For $k\ge0$ and $c_1,\ldots,c_k\in L$, consider
\[
D_k=\operatorname{ad}c_k\circ\cdots\circ
\operatorname{ad}c_1\circ D_0,
\]
where $k=0$ means $D_k=D_0$. By repeated application of
Lemma~\ref{lem:centraliser-composition}, every such map is
a derivation with image in $C$. Each inner derivation maps
$L^j$ into $L^{j+1}$; starting from
$D_0(L)\subseteq L^{r+1}$, we therefore obtain
\[
D_k(L)\subseteq L^{r+k+1}.
\]
In particular, $D_k=0$ whenever $k\ge s-r$, since $L^{s+1}=0$.

Consequently, there is a largest integer $k\ge0$ for which
some choice of $c_1,\ldots,c_k$ yields $D_k(L^2)\ne0$.
Such an integer exists because $D_0(L^2)\ne0$.
Fix a corresponding choice and put $D=D_k$.
Then $D\in\Der(L)$, $D(L)\subseteq C$, and $D(L^2)\ne0$.

It remains to show that $D(L^2)\subseteq Z(L)$.
Suppose that some $w\in L^2$ satisfies $D(w)\notin Z(L)$.
Then there exists $c\in L$ such that $[c,D(w)]\ne0$.

Set $\widetilde D=\operatorname{ad}c\circ D$.
Since $D\in\Der(L)$ and $D(L)\subseteq C$,
Lemma~\ref{lem:centraliser-composition} implies that
$\widetilde D$ is a derivation with image in $C$. Moreover,
\[
\widetilde D(w)=[c,D(w)]\ne0,
\]
so $\widetilde D(L^2)\ne0$. However,
\[
\widetilde D
=\operatorname{ad}c\circ\operatorname{ad}c_k\circ\cdots\circ
\operatorname{ad}c_1\circ D_0.
\]
Taking $c_{k+1}=c$, we obtain a composition with $k+1$
inner derivations after $D_0$ whose restriction to $L^2$ is nonzero.
This contradicts the choice of $k$ as the largest integer
for which such a composition exists.

Therefore $D(L^2)\subseteq Z(L)$. Together with $D(L^2)\ne0$,
this shows that $D$ satisfies
\eqref{eq:nilpotent-central-criterion}.
\end{proof}

We next use this derivation to construct a proper local derivation
and establish the main theorem of this section.

\begin{lemma}\label{lem:central-criterion-local}
Let $L$ be a finite-dimensional complex Lie algebra admitting
a derivation satisfying \eqref{eq:nilpotent-central-criterion}.
Then $L$ admits a local derivation $\Delta$ such that
\[
\Delta(L)\subseteq Z(L),\qquad \Delta(L^2)\ne0.
\]
In particular, $\Delta$ is not a derivation.
\end{lemma}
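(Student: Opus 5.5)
The plan is to take $\Delta=D\circ\pi$. Here $D$ is the derivation satisfying \eqref{eq:nilpotent-central-criterion}, and $\pi:L\to L^2$ is the projection attached to a fixed complement $V$ of $L^2$, as in the Remark of Section~\ref{sec:preliminaries}. Concretely, writing $x=v+w$ with $v\in V$ and $w\in L^2$, we set $\Delta(x)=D(w)$. This map is linear.

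The global properties come first. Since $\pi(L)=L^2$, we get $\Delta(L)=D(L^2)\subseteq Z(L)$. Since $\pi|_{L^2}=\Id_{L^2}$, we get $\Delta(L^2)=D(L^2)\ne0$. Because $\Delta$ takes values in $Z(L)$ and does not vanish on $L^2$, the first part of Lemma~\ref{lem:central-maps} shows that $\Delta$ is not a derivation. The only remaining task is locality.

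For locality, fix $x=v+w$ and split into two cases according to whether $x$ lies in $L^2$.
\begin{itemize}
\item If $x\in L^2$, then $v=0$. The derivation $D_x=D$ works, since $D_x(x)=D(w)=\Delta(x)$.
\item If $x\notin L^2$, the Remark provides $f\in L^*$ with $f(L^2)=0$ and $f(x)=1$. Put $z=D(w)$; this lies in $Z(L)$ because $w\in L^2$. The second part of Lemma~\ref{lem:central-maps} shows that $D_x(y)=f(y)z$ is a derivation, and $D_x(x)=z=\Delta(x)$.
\end{itemize}
Thus every value of $\Delta$ is realised by a derivation, so $\Delta\in\LocDer(L)$.

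I do not expect a genuine obstacle. The only point needing care is the case split, since the rank-one central derivation is available only when $x\notin L^2$, and for $x\in L^2$ one must fall back on $D$ itself. The hypothesis $D(L^2)\subseteq Z(L)$ is used only in the second case, to ensure that $z$ is central. The hypothesis $D(L^2)\ne0$ is what makes $\Delta$ proper.
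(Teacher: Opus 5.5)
Your proposal is correct and follows the paper's route. Both take $\Delta=D\circ\pi$, use $D$ itself as the witness on $L^2$, use the rank-one central derivation $y\mapsto f(y)\Delta(x)$ off $L^2$, and get properness from the first part of Lemma~\ref{lem:central-maps}. One small correction to your closing commentary: the centrality of $D(L^2)$ is also what gives $\Delta(L)\subseteq Z(L)$, and hence properness, not only the second locality case.
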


\begin{proof}
Choose a vector-space complement $V$ of $L^2$ in $L$, and let
$\pi:L\to L^2$ be the projection associated with $L=V\oplus L^2$.
Define $\Delta=D\circ\pi.$
Since $D$ and $\pi$ are linear, $\Delta$ is linear. Moreover,
$\pi(L)\subseteq L^2$ and $D(L^2)\subseteq Z(L)$ imply that
$\Delta(L)\subseteq Z(L)$. Since $\pi(w)=w$ for every $w\in L^2$,
we also have $\Delta|_{L^2}=D|_{L^2}$.

To verify locality, fix $x\in L$. If $x\in L^2$, then
$\Delta(x)=D(x)$, so $D$ is a suitable derivation at $x$.
If $x\notin L^2$, choose $f\in L^*$ satisfying
$f(L^2)=0$ and $f(x)=1$, and define
\[
D_x(z)=f(z)\Delta(x)\qquad(z\in L).
\]
The map $D_x$ has central image and vanishes on $L^2$.
Hence it is a derivation by Lemma~\ref{lem:central-maps}, and
\[
D_x(x)=f(x)\Delta(x)=\Delta(x).
\]
Thus, for every $x\in L$, there exists a derivation agreeing
with $\Delta$ at $x$, proving that $\Delta$ is a local derivation.

Finally, $\Delta|_{L^2}=D|_{L^2}$ gives $\Delta(L^2)=D(L^2)\ne0$.
Since $\Delta(L)\subseteq Z(L)$, Lemma~\ref{lem:central-maps}
shows that $\Delta$ is not a derivation.
\end{proof}

\begin{theorem}\label{thm:nilpotent-proper-local}
Every finite-dimensional non-abelian nilpotent complex Lie algebra
$L$ admits a local derivation $\Delta$ satisfying
\[
\Delta(L)\subseteq Z(L),\qquad \Delta([L,L])\ne0.
\]
In particular, $\Delta$ is not a derivation.
\end{theorem}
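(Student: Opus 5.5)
The plan is to combine the three preceding lemmas, splitting according to the nilpotency class $s$ of $L$. Since $L$ is non-abelian and nilpotent, it has a well-defined class $s$, and $L^2=[L,L]\ne0$ forces $s\ge2$. The cases are therefore $s=2$ and $s\ge3$.

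\textbf{Step 1: the auxiliary derivation.} In the case $s=2$, Lemma~\ref{lem:class-two-central-derivation} supplies a derivation $D$ satisfying \eqref{eq:nilpotent-central-criterion}, namely $D=\Id+\pi$. In the case $s\ge3$, Lemma~\ref{lem:class-three-plus-central-derivation} supplies such a derivation, obtained as a composition of inner derivations with image in $C_L(L^2)$. In either case $L$ admits $D\in\Der(L)$ with $0\ne D(L^2)\subseteq Z(L)$.

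\textbf{Step 2: the local derivation.} I would then apply Lemma~\ref{lem:central-criterion-local} to this $D$. It gives a local derivation $\Delta=D\circ\pi$ with $\Delta(L)\subseteq Z(L)$ and $\Delta(L^2)\ne0$. Since $L^2=[L,L]$, this is exactly the displayed condition in the theorem.

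\textbf{Step 3: properness.} The map $\Delta$ is central-valued and does not vanish on $L^2$. By the first item of Lemma~\ref{lem:central-maps}, it is therefore not a derivation.

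There is no genuine obstacle, since all the substantive work is in the earlier lemmas. The only point needing care is the case split: non-abelianity is what guarantees $L^2\ne0$ and hence $s\ge2$, so the two lemmas together cover every case. Lemma~\ref{lem:class-three-plus-central-derivation} genuinely needs $s\ge3$, because it requires $C_L(L^2)\ne L$.
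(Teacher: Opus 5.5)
Your proposal is correct and matches the paper's proof: the paper also splits into class $s=2$ and $s\ge3$, takes the auxiliary derivation from Lemmas~\ref{lem:class-two-central-derivation} and~\ref{lem:class-three-plus-central-derivation} respectively, and then applies Lemma~\ref{lem:central-criterion-local}. That lemma already contains your Step~3 (non-derivation via Lemma~\ref{lem:central-maps}), so your separate properness step is redundant but harmless.
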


\begin{proof}
Since $L$ is non-abelian, its nilpotency class is at least two.
Lemmas~\ref{lem:class-two-central-derivation} and
\ref{lem:class-three-plus-central-derivation} therefore provide
a derivation satisfying \eqref{eq:nilpotent-central-criterion}.
The conclusion follows from Lemma~\ref{lem:central-criterion-local}.
\end{proof}

\section{Local automorphisms}\label{subsec:lie-local-automorphisms}

In this section, we prove that every finite-dimensional non-abelian
nilpotent complex Lie algebra admits a local automorphism which
is not an automorphism. As in the preceding section, we construct
a linear map differing from the identity by a central-image map
which acts nontrivially on the derived algebra. We then verify
bijectivity and construct an automorphism agreeing with this map
at each element of $L$.

We treat nilpotency classes $s=2$ and $s\ge3$
separately. For $s\ge3$, we use the derivation constructed in
Lemma~\ref{lem:class-three-plus-central-derivation}, whose form
as a composition of inner derivations provides the additional
properties needed below.

\begin{lemma}\label{lem:class-two-local-aut}
Let $L$ be a finite-dimensional complex nilpotent Lie algebra
of class two. Then $L$ admits a proper local automorphism $\Phi$
such that
\[
(\Phi-\Id)(L)\subseteq Z(L).
\]
\end{lemma}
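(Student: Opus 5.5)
We take a complement $V$ of $L^2$, let $\pi:L\to L^2$ be the corresponding projection, and define
\[
\Phi=\Id+\pi .
\]
Since $L$ has class two, $[L,L^2]=L^3=0$, so $L^2\subseteq Z(L)$. Hence $(\Phi-\Id)(L)=\pi(L)\subseteq Z(L)$, which is the required property. The proof then has three steps: $\Phi$ is not an automorphism, it is a local automorphism on $L^2$, and it is a local automorphism off $L^2$.

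\emph{$\Phi$ is not an automorphism.} Since $\pi$ has central image, $[\Phi(a),\Phi(b)]=[a,b]$. On the other hand, $\pi$ is the identity on $L^2$, so $\Phi([a,b])=2[a,b]$. Because $L^2\ne0$, some bracket $[a,b]$ is nonzero, and for it $\Phi([a,b])=2[a,b]\ne[a,b]=[\Phi(a),\Phi(b)]$.

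\emph{Locality at $x\in L^2$.} Here $\Phi(x)=2x$. Fix $t\in\C$ with $t^2=2$, and let $A_t$ act as $t\,\Id$ on $V$ and as $t^2\Id$ on $L^2$. Then $A_t$ is bijective. Because $L^2$ is central, the bracket of $a=v_a+w_a$ and $b=v_b+w_b$ depends only on the $V$-components: $[a,b]=[v_a,v_b]\in L^2$. Therefore
\[
[A_ta,A_tb]=t^2[v_a,v_b]=t^2[a,b]=A_t([a,b]),
\]
so $A_t\in\Aut(L)$, and $A_t(x)=2x=\Phi(x)$.

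\emph{Locality at $x\notin L^2$.} Write $x=v+w$ with $v\in V$, $v\ne0$, and $w\in L^2$; then $\Phi(x)=x+w$. Using the Remark, choose $f\in L^*$ with $f(L^2)=0$ and $f(x)=1$. Put $z=w\in L^2\subseteq Z(L)$, and note that $f(z)=0$ because $f$ vanishes on $L^2$. By Lemma~\ref{lem:central-maps}, the map $A_x=\Id+T$ with $T(y)=f(y)w$ is an automorphism. It satisfies $A_x(x)=x+f(x)w=x+w=\Phi(x)$.

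Every local automorphism in finite dimension is bijective, as noted in Section~\ref{sec:preliminaries}; directly, $\Phi$ is bijective with inverse $\Id-\tfrac12\pi$. So $\Phi$ is a proper local automorphism with $(\Phi-\Id)(L)\subseteq Z(L)$.

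The only point needing care is locality on $L^2$ itself. There the elementary maps $\Id+T$ from Lemma~\ref{lem:central-maps} fix $x$, since $T(L^2)=0$, so they cannot produce $2x$. This is why the scaling automorphism $A_t$ is needed, and it is available because the class is exactly two. Over $\C$ the required square root $t$ always exists.
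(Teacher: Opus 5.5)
Your proposal is correct and follows the paper's proof essentially verbatim. It uses the same map $\Phi=\Id+\pi$ and the same properness check. At points of $L^2$ it uses the same scaling automorphism, acting by $\sqrt2$ on $V$ and by $2$ on $L^2$, which the paper writes as $\sqrt2\,\Id+(2-\sqrt2)\pi$. Off $L^2$ it uses the same central shear $\Id+f(\cdot)\pi(x)$ from Lemma~\ref{lem:central-maps}.
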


\begin{proof}
Since $L$ has nilpotency class two, $L^2\ne0$ and
$[L,L^2]=L^3=0$, so $L^2\subseteq Z(L)$.
Choose a vector-space complement $V$ of $L^2$ in $L$,
and let $\pi:L\to L^2$ be the corresponding projection. Set
\[
\Phi=\Id+\pi.
\]
Since $\pi^2=\pi$, we have
\[
(\Id+\pi)\bigl(\Id-\tfrac12\pi\bigr)
=\bigl(\Id-\tfrac12\pi\bigr)(\Id+\pi)=\Id.
\]
Thus $\Phi$ is bijective with inverse $\Id-\tfrac12\pi$.
Moreover, $(\Phi-\Id)(L)=\pi(L)=L^2\subseteq Z(L)$.

To verify locality, fix $x\in L$. If $x\in L^2$, take
\[
A_x=\sqrt2\,\Id+(2-\sqrt2)\pi.
\]
For $v\in V$ and $w\in L^2$, we have $\pi(v)=0$ and $\pi(w)=w$,
so
\[
A_x(v+w)=\sqrt2(v+w)+(2-\sqrt2)w=\sqrt2v+2w.
\]
Thus $A_x$ is bijective, with inverse given by
\[
A_x^{-1}(v+w)=\frac1{\sqrt2}v+\frac12w.
\]
Since $\pi(L)\subseteq Z(L)$ and
$\pi|_{L^2}=\Id_{L^2}$, for all $a,b\in L$ we obtain
\[
[A_x(a),A_x(b)]=2[a,b]=A_x([a,b]).
\]
Hence $A_x$ is an automorphism, and
$A_x(x)=2x=\Phi(x)$.

If $x\notin L^2$, choose $f\in L^*$ with $f(L^2)=0$ and $f(x)=1$,
and define
\[
E_x(z)=f(z)\pi(x)\qquad(z\in L).
\]
Since $\pi(x)\in L^2 \subseteq Z(L)$ and $f(\pi(x))=0$,
Lemma~\ref{lem:central-maps} shows that
$A_x=\Id+E_x$ is an automorphism with inverse $\Id-E_x$.
Moreover,
\[
A_x(x)=x+f(x)\pi(x)=x+\pi(x)=\Phi(x).
\]
Thus $\Phi$ is a local automorphism.

Finally, choose $a,b\in L$ with $[a,b]\ne0$.
Since $\pi(L)$ is central and $\pi$ fixes $L^2$,
\[
[\Phi(a),\Phi(b)]=[a,b],
\qquad
\Phi([a,b])=2[a,b]\ne[a,b].
\]
Therefore $\Phi$ is not an automorphism.
\end{proof}

For the case $s\ge3$, we first record a criterion for obtaining
an automorphism from a nilpotent derivation.

\begin{lemma}\label{lem:nilpotent-derivation-automorphism}
Let $L$ be a Lie algebra and let $D\in\Der(L)$ satisfy
\[
[D(L),D(L)]=0,\qquad D^q=0
\]
for some integer $q\ge1$. Then $\Id+D$ is an automorphism, with
\begin{equation}\label{eq:nilpotent-geometric-inverse}
(\Id+D)^{-1}=\sum_{i=0}^{q-1}(-D)^i.
\end{equation}
\end{lemma}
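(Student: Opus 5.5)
The proof splits into two parts: the inverse formula, which is purely linear algebra, and the bracket identity, which uses the derivation rule together with $[D(L),D(L)]=0$.

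\textbf{Invertibility.} Since $D^q=0$, the telescoping identity
\[
(\Id+D)\sum_{i=0}^{q-1}(-D)^i=\sum_{i=0}^{q-1}(-D)^i-\sum_{i=1}^{q}(-D)^{i}=\Id-(-D)^q=\Id
\]
holds. The two factors are polynomials in $D$, so they commute, and the same computation gives the product in the other order. This establishes \eqref{eq:nilpotent-geometric-inverse} and shows that $\Id+D$ is bijective.

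\textbf{Bracket preservation.} For $a,b\in L$, expand bilinearly:
\[
[(\Id+D)(a),(\Id+D)(b)]=[a,b]+[D(a),b]+[a,D(b)]+[D(a),D(b)].
\]
The middle two terms sum to $D([a,b])$ because $D$ is a derivation. The last term vanishes because $D(a),D(b)\in D(L)$ and $[D(L),D(L)]=0$. Hence the right-hand side equals $(\Id+D)([a,b])$, so $\Id+D$ is an automorphism.

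Neither step is a genuine obstacle. The only point needing attention is that the quadratic term $[D(a),D(b)]$ is precisely what the hypothesis $[D(L),D(L)]=0$ removes. Without that hypothesis one would need the full exponential $\exp D$ rather than $\Id+D$. No other earlier result is required, and the argument does not use that the ground field is $\C$.
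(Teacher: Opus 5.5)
Your proposal is correct and follows essentially the same route as the paper: you expand $[(\Id+D)(a),(\Id+D)(b)]$, use the derivation rule for the cross terms and $[D(L),D(L)]=0$ for the quadratic term, and obtain the inverse from the telescoping identity $\Id-(-D)^q=\Id$. The only difference is that you treat invertibility before bracket preservation, which is immaterial.
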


\begin{proof}
For all $a,b\in L$, the derivation identity gives
\begin{align*}
[(\Id+D)(a),(\Id+D)(b)]
&=[a,b]+[D(a),b]+[a,D(b)]+[D(a),D(b)]\\
&=[a,b]+D([a,b])\\
&=(\Id+D)([a,b]).
\end{align*}
Thus $\Id+D$ preserves the bracket. Moreover,
\[
(\Id+D)\sum_{i=0}^{q-1}(-D)^i
=\left(\sum_{i=0}^{q-1}(-D)^i\right)(\Id+D)
=\Id-(-D)^q=\Id.
\]
Hence $\Id+D$ is bijective with inverse
\eqref{eq:nilpotent-geometric-inverse}.
\end{proof}

\begin{lemma}\label{lem:class-three-plus-local-aut}
Let $L$ be a finite-dimensional complex nilpotent Lie algebra
of class $s\ge3$. Then $L$ admits a proper local automorphism
$\Phi$ such that
\[
(\Phi-\Id)(L)\subseteq Z(L),\qquad (\Phi-\Id)^2=0.
\]
\end{lemma}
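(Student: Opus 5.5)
The plan is to mirror the construction of Lemma~\ref{lem:central-criterion-local}, now at the level of automorphisms. Let $D$ be the derivation supplied by Lemma~\ref{lem:class-three-plus-central-derivation}. Thus
\[
D=\operatorname{ad}c_k\circ\cdots\circ\operatorname{ad}c_1\circ\operatorname{ad}y,
\]
with $y\in L^r$, $r\ge1$, $D(L)\subseteq C=C_L(L^2)$ and $0\ne D(L^2)\subseteq Z(L)$. Choose a complement $V$ of $L^2$ with projection $\pi$, and set
\[
\Delta=D\circ\pi,\qquad \Phi=\Id+\Delta.
\]
Exactly as in Lemma~\ref{lem:central-criterion-local}, $\Delta(L)\subseteq Z(L)$ and $\Delta|_{L^2}=D|_{L^2}\ne0$.

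\textbf{Step 1: the required properties of $\Phi$.} We need $\Delta^2=0$. Since $D$ begins with $\operatorname{ad}y$, it kills every central element. Also $\Delta(L)\subseteq Z(L)$. Hence $\Delta^2=D\pi\Delta=0$ regardless of how $\pi$ acts on $\Delta(L)$. It follows that $\Phi$ is bijective with inverse $\Id-\Delta$, and $(\Phi-\Id)(L)\subseteq Z(L)$.

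\textbf{Step 2: properness.} Suppose $\Phi$ preserved the bracket. Centrality of $\Delta(L)$ gives $[\Phi(a),\Phi(b)]=[a,b]$, so we would need $\Delta([a,b])=0$ for all $a,b$. This contradicts $\Delta(L^2)\ne0$.

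\textbf{Step 3: locality.} Fix $x\in L$ and split into two cases.

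If $x\in L^2$, then $\Phi(x)=x+D(x)$, and we use $A_x=\Id+D$ via Lemma~\ref{lem:nilpotent-derivation-automorphism}. Its hypotheses hold. First, each inner derivation raises the lower-central degree, so $D(L^j)\subseteq L^{j+1}$ and $D^s=0$. Second, $D(L)\subseteq L^{r+1}\subseteq L^2$ and $D(L)\subseteq C$, so
\[
[D(L),D(L)]\subseteq[C,L^2]=0.
\]

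If $x\notin L^2$, take $f\in L^*$ with $f(L^2)=0$ and $f(x)=1$, and set $E_x(z)=f(z)\Delta(x)$. Then $\Delta(x)=D(\pi(x))\in D(L)\subseteq L^2$, so $f(\Delta(x))=0$. Moreover $\Delta(x)\in Z(L)$. Lemma~\ref{lem:central-maps} then makes $A_x=\Id+E_x$ an automorphism, and $A_x(x)=x+\Delta(x)=\Phi(x)$.

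The main point needing care is the first case of Step 3. It uses the specific form of $D$ as a composition of inner derivations: this is what puts $D(L)$ inside $L^2$, makes $D$ nilpotent, and yields $[D(L),D(L)]=0$. The same form gives $D(Z(L))=0$, which Step 1 uses to obtain $\Delta^2=0$. This is why the class-$\ge3$ case relies on Lemma~\ref{lem:class-three-plus-central-derivation} and not merely on the existence of some derivation satisfying \eqref{eq:nilpotent-central-criterion}.
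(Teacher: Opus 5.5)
Your proposal is the paper's proof essentially step for step. It uses the same $D$ from Lemma~\ref{lem:class-three-plus-central-derivation} and the same $\Phi=\Id+D\pi$. On $L^2$ the witness is $\Id+D$ via Lemma~\ref{lem:nilpotent-derivation-automorphism}, off $L^2$ it is the central shear $\Id+E_x$, and properness is argued the same way.

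The one flaw is the justification of $\Delta^2=0$ in Step~1. It is not true that $D\pi\Delta=0$ ``regardless of how $\pi$ acts on $\Delta(L)$''. The map $D$ kills $Z(L)$, but $D\pi$ need not: for $z\in Z(L)\setminus L^2$, the component $\pi(z)$ need not be central. For example, take $L=\langle e_1,e_2,e_3,e_4\rangle\oplus\C f$ with $[e_1,e_2]=e_3$ and $[e_1,e_3]=e_4$. The lemma may output $D=\operatorname{ad}e_1$. If the complement is $V=\langle e_1,e_2,f+e_3\rangle$, then $D\pi(f)=-e_4\neq0$ although $f\in Z(L)$.

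What actually makes the step work is $\Delta(L)\subseteq D(L)\subseteq L^2$, which you only invoke in Step~3. With it, $\pi$ fixes $\Delta(L)$, so $\Delta^2=D^2\pi$. This vanishes because $D\pi(L)\subseteq Z(L)$ and $D(Z(L))=0$, which is exactly how the paper argues.
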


\begin{proof}
Let $D$ be the derivation constructed in
Lemma~\ref{lem:class-three-plus-central-derivation}.
It satisfies \eqref{eq:nilpotent-central-criterion},
has image in $C_L(L^2)$, and is a composition of inner derivations.

Every inner derivation maps $L$ into $L^2$, annihilates $Z(L)$,
and maps $L^j$ into $L^{j+1}$. Consequently,
\[
D(L)\subseteq L^2\cap C_L(L^2),\qquad D(Z(L))=0,
\qquad D(L^j)\subseteq L^{j+1}.
\]
The last inclusion implies $D^s=0$. Also,
$[D(L),D(L)]=0$, since $D(L)\subseteq L^2$ and every element
of $D(L)$ centralizes $L^2$. By
Lemma~\ref{lem:nilpotent-derivation-automorphism},
$\Id+D$ is therefore an automorphism.

Choose $L=V\oplus L^2$, let $\pi:L\to L^2$ be the corresponding
projection, and set
\[
\Delta=D\circ\pi,\qquad \Phi=\Id+\Delta.
\]
Since $D(L^2)\subseteq L^2\cap Z(L)$, we have
$\Delta(L)\subseteq L^2\cap Z(L)$. As $\pi$ fixes $L^2$ and
$D$ annihilates $Z(L)$,
\[
\Delta^2=D^2\circ\pi=0.
\]
Hence $\Phi$ is bijective with inverse $\Id-\Delta$, and
the two asserted properties of $\Phi-\Id$ hold.

To verify locality, fix $x\in L$. If $x\in L^2$, take
$A_x=\Id+D$. Since $\pi(x)=x$,
\[
A_x(x)=x+D(x)=x+\Delta(x)=\Phi(x).
\]
If $x\notin L^2$, choose $f\in L^*$ with $f(L^2)=0$ and $f(x)=1$,
and define
\[
E_x(z)=f(z)\Delta(x)\qquad(z\in L).
\]
Here $\Delta(x)\in L^2\cap Z(L)$, so $f(\Delta(x))=0$.
Lemma~\ref{lem:central-maps} implies that
$A_x=\Id+E_x$ is an automorphism with inverse $\Id-E_x$.
Moreover,
\[
A_x(x)=x+f(x)\Delta(x)=\Phi(x).
\]
Thus $\Phi$ is a local automorphism.

Finally, since $D(L^2)\ne0$ and $L^2$ is spanned by brackets,
there exist $a,b\in L$ with $D([a,b])\ne0$.
Centrality of $\Delta(L)$ and the equality $\Delta|_{L^2}=D|_{L^2}$ give
\[
[\Phi(a),\Phi(b)]=[a,b],
\qquad
\Phi([a,b])=[a,b]+D([a,b])\ne[a,b].
\]
Therefore $\Phi$ is not an automorphism.
\end{proof}

The preceding lemmas yield the main theorem of this section.

\begin{theorem}\label{thm:nilpotent-proper-local-aut}
Every finite-dimensional non-abelian nilpotent complex Lie algebra
$L$ admits a proper local automorphism $\Phi$ satisfying
\[
(\Phi-\Id)(L)\subseteq Z(L).
\]
If the nilpotency class is at least three, $\Phi$ can additionally
be chosen so that $(\Phi-\Id)^2=0$. In particular,
\[
\operatorname{Aut}(L)\subsetneq\operatorname{LocAut}(L).
\]
\end{theorem}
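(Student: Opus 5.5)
The theorem assembles Lemmas~\ref{lem:class-two-local-aut} and~\ref{lem:class-three-plus-local-aut}. We split on the nilpotency class $s$.

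Since $L$ is non-abelian, $L^2=[L,L]\ne0$, so $s\ge2$. Here $s$ is finite because $L$ is nilpotent, and $L\ne0$.

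If $s=2$, Lemma~\ref{lem:class-two-local-aut} supplies a proper local automorphism $\Phi=\Id+\pi$ with $(\Phi-\Id)(L)=L^2\subseteq Z(L)$. This is the first assertion.

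If $s\ge3$, Lemma~\ref{lem:class-three-plus-local-aut} supplies a proper local automorphism $\Phi=\Id+D\circ\pi$. It satisfies both $(\Phi-\Id)(L)\subseteq Z(L)$ and $(\Phi-\Id)^2=0$, which gives the first assertion together with the additional clause for class at least three.

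For the final strict inclusion, recall from Section~\ref{sec:preliminaries} that $\Aut(L)\subseteq\LocAut(L)$ always holds. The map $\Phi$ found above lies in $\LocAut(L)$ but not in $\Aut(L)$, since it is proper. Hence the inclusion is strict.

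There is no real obstacle at this stage: all the substantive work is in the two lemmas. In the class-two case this is the choice of the witness $\sqrt2\,\Id+(2-\sqrt2)\pi$ on $L^2$. In the higher-class case it is the nilpotent derivation $D$ from Lemma~\ref{lem:class-three-plus-central-derivation} with $D(Z(L))=0$. The only point to check is that the case split is exhaustive, which follows from $s\ge2$.
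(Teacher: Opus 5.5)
Your proposal is correct and matches the paper's proof: you split on class two versus class at least three, invoke Lemmas~\ref{lem:class-two-local-aut} and~\ref{lem:class-three-plus-local-aut}, and get the strict inclusion from $\Aut(L)\subseteq\LocAut(L)$ together with properness of $\Phi$. You also correctly claim $(\Phi-\Id)^2=0$ only for class at least three, since in class two $(\Phi-\Id)^2=\pi^2=\pi\ne0$.
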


\begin{proof}
The existence and the asserted properties follow from
Lemmas~\ref{lem:class-two-local-aut} and
\ref{lem:class-three-plus-local-aut}.
Since every automorphism is a local automorphism, the existence
of a proper local automorphism gives the strict inclusion.
\end{proof}

\begin{corollary}\label{cor:nilpotent-local-rigidity}
Let $L$ be a finite-dimensional nilpotent complex Lie algebra.
The following conditions are equivalent:
\begin{enumerate}
\item $L$ is abelian;
\item every local derivation of $L$ is a derivation;
\item every local automorphism of $L$ is an automorphism.
\end{enumerate}
\end{corollary}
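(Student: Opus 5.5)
We prove the equivalences by showing (1)$\Rightarrow$(2), (1)$\Rightarrow$(3), and then the contrapositives (2)$\Rightarrow$(1) and (3)$\Rightarrow$(1) using the two main theorems.

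Suppose first that $L$ is abelian. The bracket is identically zero, so every linear map $L\to L$ satisfies the derivation identity trivially. Hence $\Der(L)=\operatorname{End}(L)$, and therefore $\LocDer(L)=\Der(L)$, which is (2). Likewise every bijective linear map preserves the zero bracket, so $\Aut(L)=GL(L)$. A local automorphism is bijective, as observed in Section~\ref{sec:preliminaries}: $\Phi(x)=0$ forces $A_x(x)=0$ and so $x=0$, and finite dimension turns injectivity into surjectivity. Every local automorphism is therefore an automorphism, which is (3).

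Conversely, suppose $L$ is not abelian. Since $L$ is nilpotent, its class is at least two. Theorem~\ref{thm:nilpotent-proper-local} supplies a local derivation $\Delta$ that is not a derivation, so (2) fails. Theorem~\ref{thm:nilpotent-proper-local-aut} supplies a proper local automorphism, so (3) fails. Thus both (2)$\Rightarrow$(1) and (3)$\Rightarrow$(1) hold by contraposition.

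Because all the substantive work sits in the two theorems, no step here is a real obstacle. The only point needing care is the zero algebra, to which the paper assigns no nilpotency class. It is abelian, and (2) and (3) hold for it trivially, so it is consistent with the statement. The argument for non-abelian $L$ uses only that its class is at least two, which is automatic once $L\neq 0$ is nilpotent and non-abelian.
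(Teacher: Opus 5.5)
Your proposal is correct and follows essentially the same route as the paper: when $L$ is abelian, every linear map is a derivation and every bijective linear map is an automorphism, with local automorphisms bijective in finite dimension. When $L$ is non-abelian, Theorems~\ref{thm:nilpotent-proper-local} and~\ref{thm:nilpotent-proper-local-aut} give proper local maps, which yields the converse implications by contraposition.
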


\begin{proof}
If $L$ is abelian, every linear map $L\to L$ is a derivation,
and every bijective linear map $L\to L$ is an automorphism.
Since every local automorphism is bijective in finite dimension,
the first condition implies the other two.

If $L$ is non-abelian,
Theorems~\ref{thm:nilpotent-proper-local} and
\ref{thm:nilpotent-proper-local-aut} provide a local derivation
which is not a derivation and a local automorphism which is not
an automorphism, respectively. Hence either of the last two
conditions implies the first.
\end{proof}

\section{Local derivations of nilpotent Leibniz algebras}
\label{sec:leibniz-local-maps}
In this section, we prove that every finite-dimensional non-abelian
nilpotent complex right Leibniz algebra admits a proper local
derivation whenever its dimension is at most seven or its nilpotency
class is at most four. Both bounds are sharp: we exhibit an
eight-dimensional non-Lie algebra $A_8$ of class five satisfying
$\LocDer(A_8)=\Der(A_8)$. Consequently, eight and five are the
least possible dimension and nilpotency class, respectively, for
this equality among non-abelian nilpotent complex right Leibniz
algebras.

\subsection{Notation and auxiliary constructions}
We recall the notation and record the auxiliary derivation
constructions used below. Throughout this section, $A$ denotes
a finite-dimensional complex right Leibniz algebra, so
\[
[x,[y,z]]=[[x,y],z]-[[x,z],y].
\]
We use the right lower central series
\[
A^1=A,\qquad A^{j+1}=[A^j,A].
\]
This agrees with the lower central series used above when $A$ is Lie. Write
\[
d_j=\dim(A^j/A^{j+1}),\qquad\mathcal N=\{z\in A:[A,z]=0\},
\]
and let $Z(A)$ be the two-sided centre. The definitions of derivations and local derivations are
unchanged. A local derivation is called proper if it is not a derivation.

Induction in the right Leibniz identity gives
\begin{equation}\label{eq:min-filtration}
[A^i,A^j]\subseteq A^{i+j}.
\end{equation}
Every $R_z:x\mapsto[x,z]$ is a derivation. Squares and symmetric brackets lie in $\mathcal N$, and $\mathcal N$ is stable
under right multiplications: for $n\in\mathcal N$,
\[
[x,[n,z]]=[[x,n],z]-[[x,z],n]=0.
\]
If $A$ has class $c$, then $A^c\subseteq Z(A)$. Its nonzero lower central terms decrease strictly: equality of two
consecutive terms would persist indefinitely and contradict nilpotency. Thus $d_j\ge1$ for $1\le j\le c$.

\begin{lemma}\label{lem:min-projection}
Suppose $D\in\Der(A)$ satisfies
\begin{equation}\label{eq:min-K}
0\ne D(A^2)\subseteq Z(A).
\end{equation}
For a complement $A=V\oplus A^2$, let $\pi$ be the
projection onto $A^2$. Then $\Delta=D\pi$ is a proper
local derivation.
\end{lemma}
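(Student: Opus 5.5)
This is the Leibniz analogue of Lemma~\ref{lem:central-criterion-local}, and I would follow that proof, checking that each step survives without skew-symmetry. The first task is to check that Lemma~\ref{lem:central-maps} holds verbatim for right Leibniz algebras. It does, because its proof uses only the two-sided centrality of $Z(A)$. For $T:A\to Z(A)$ we have $[T(a),b]=[a,T(b)]=0$, so $T$ is a derivation exactly when $T(A^2)=0$. In particular, for $f\in A^*$ with $f(A^2)=0$ and $z\in Z(A)$, the map $x\mapsto f(x)z$ is a derivation.

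Set $\Delta=D\pi$. This map is linear, and since $\pi(A)\subseteq A^2$ and $D(A^2)\subseteq Z(A)$, we get $\Delta(A)\subseteq Z(A)$. Because $\pi|_{A^2}=\Id_{A^2}$, we also have $\Delta|_{A^2}=D|_{A^2}$. Locality then splits into two cases.
\begin{itemize}
\item If $x\in A^2$, then $\Delta(x)=D(x)$, so $D$ itself witnesses locality at $x$.
\item If $x\notin A^2$, choose $f\in A^*$ with $f(A^2)=0$ and $f(x)=1$, and set $D_x(z)=f(z)\Delta(x)$. Since $\Delta(x)\in Z(A)$, the Leibniz version of Lemma~\ref{lem:central-maps} shows that $D_x$ is a derivation, and $D_x(x)=\Delta(x)$.
\end{itemize}

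For properness, note that $\Delta$ has central image while $\Delta(A^2)=D(A^2)\ne0$. By the first part of Lemma~\ref{lem:central-maps}, $\Delta$ is therefore not a derivation.

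No step here is a real obstacle. The only point needing care is that the centre must be two-sided for Lemma~\ref{lem:central-maps} to apply. This is exactly why hypothesis \eqref{eq:min-K} asks for $Z(A)$ rather than the one-sided annihilator $\mathcal N$.
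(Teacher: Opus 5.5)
Your proposal is correct and matches the paper's proof, which simply states that the argument of Lemma~\ref{lem:central-criterion-local} carries over verbatim to right Leibniz algebras. The paper's justification is that this argument uses only the derivation identity, two-sided centrality of $D(A^2)$, and linear algebra; you have written that argument out explicitly, including the check that Lemma~\ref{lem:central-maps} survives without skew-symmetry.
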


\begin{proof}
The proof of Lemma~\ref{lem:central-criterion-local}
applies verbatim to right Leibniz algebras, since it uses
only the derivation identity, the two-sided centrality
of $D(A^2)$, and elementary linear algebra.
\end{proof}
\begin{lemma}\label{lem:min-ann}
If $A$ is nilpotent and $A^2\subseteq\mathcal N+A^3$, then $[A,A^2]=0$. Every non-abelian nilpotent
algebra satisfying $[A,A^2]=0$ admits a proper local derivation. In particular, this holds if $d_1=1$,
or if $d_2=1$ and some square does not belong to $A^3$.
\end{lemma}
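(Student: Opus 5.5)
The statement has three parts, and I will prove them in order.

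\emph{First part.} Suppose $A^2\subseteq\mathcal N+A^3$. I first claim that $A^j\subseteq\mathcal N+A^{j+1}$ for all $j\ge2$, by induction on $j$. Since $A^{j+1}=[A^j,A]$, the inclusion $A^j\subseteq\mathcal N+A^{j+1}$ gives
\[
A^{j+1}\subseteq[\mathcal N,A]+[A^{j+1},A]\subseteq\mathcal N+A^{j+2},
\]
because $\mathcal N$ is stable under right multiplications. Iterating, $A^2\subseteq\mathcal N+A^{m}$ for every $m$. Nilpotency then gives $A^2\subseteq\mathcal N$, which is exactly $[A,A^2]=0$.

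\emph{Second part.} Assume $[A,A^2]=0$ and $A$ is non-abelian. By Lemma~\ref{lem:min-projection}, it suffices to find $D\in\Der(A)$ with $0\ne D(A^2)\subseteq Z(A)$. I try to mimic Lemma~\ref{lem:class-two-central-derivation}. If $A^2\subseteq Z(A)$, i.e.\ $A$ has class two, then $D=\Id+\pi$ works with the same computation: $D([a,b])=2[a,b]$, $[\pi(a),b]=0$, and $[a,\pi(b)]=0$.

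In general, $[A,A^2]=0$ gives $[a,[b,c]]=0$. The Leibniz identity then shows that $R_c R_b=R_{[b,c]}+R_bR_c$ on $A$; I do not need this relation directly. Instead, take $D=R_y$ for suitable $y$, and then compose further right multiplications, as in Lemma~\ref{lem:class-three-plus-central-derivation}. Since $[A,A^2]=0$, any composition $R_{c_k}\cdots R_{c_1}$ of length at least two is a derivation. Indeed,
\[
F([a,b])=[[\cdots],b]\text{-terms}+[a,F(b)],
\]
and the cross terms land in $[A,A^2]=0$ or can be rearranged. I would verify this directly: for $F=R_c\circ G$ with $G$ a derivation and $G(A)\subseteq A^2$,
\[
F([a,b])=[[G(a),b],c]+[[a,G(b)],c].
\]
Here $[[a,G(b)],c]=[a,[G(b),c]]+[[a,c],G(b)]=[a,F(b)]$ modulo $[A,A^2]=0$. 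Also $[[G(a),b],c]=[G(a),[b,c]]+[[G(a),c],b]=[F(a),b]$ modulo the same ideal. Hence $F$ is a derivation.

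Then I use the maximal-length argument. Starting from some $R_y$ with $R_y(A^2)\ne0$, extend by right multiplications while the restriction to $A^2$ stays nonzero. The maximal composition $D$ satisfies $[D(A^2),A]=0$ by maximality, and $[A,D(A^2)]\subseteq[A,A^2]=0$. Hence $D(A^2)\subseteq Z(A)$.

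\textbf{Main obstacle.} The hard step is the start: some $R_y$ must be nonzero on $A^2$. If $[A^2,A]=0$ as well, then $A^2\subseteq Z(A)$ and we are in the class-two case handled above. Otherwise $A^3\ne0$, and choosing $w\in A^2$, $y$ with $[w,y]\ne0$ gives a start.

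\emph{Third part.} If $d_1=1$, then $A$ is generated by a single $x$ modulo $A^2$. Hence $A^2$ is spanned modulo $A^3$ by $[x,x]\in\mathcal N$. So $A^2\subseteq\mathcal N+A^3$, and the first two parts apply. If $d_2=1$ and some square $[u,u]\notin A^3$, then $A^2=\C[u,u]+A^3$ with $[u,u]\in\mathcal N$, and the first two parts apply again.
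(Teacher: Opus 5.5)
Your proof is correct and follows the paper's argument closely: the same induction $A^j\subseteq\mathcal N+A^{j+1}\Rightarrow A^{j+1}\subseteq\mathcal N+A^{j+2}$, the same $\Id+\pi$ in class two, the same Leibniz-identity check that words in right multiplications are derivations once $[A,A^2]=0$, and the same expansion modulo $A^3$ for $d_1=1$ or $d_2=1$. The only difference is how you choose the word: you extend greedily until every further $R_c$ kills $D(A^2)$, which terminates by nilpotency (you should say this explicitly), whereas the paper takes a word of length $c-2$ that is nonzero on $A^2$, so its image on $A^2$ lies in $A^c\subseteq Z(A)$ automatically.
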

\begin{proof}
Stability of $\mathcal N$ under right multiplication implies
\[
A^j\subseteq\mathcal N+A^{j+1}\ \Longrightarrow\ A^{j+1}\subseteq\mathcal N+A^{j+2}.
\]
Starting with $j=2$ gives $A^2\subseteq\mathcal N+A^j$ for every $j\ge3$. Nilpotency gives $A^2\subseteq\mathcal N$.

Assume $[A,A^2]=0$ and let $c$ be the nilpotency class. If $c=2$, choose $A=V\oplus A^2$ and take $D$
equal to the identity on $V$ and twice the identity on $A^2$. It satisfies~\eqref{eq:min-K}. If $c\ge3$, every nonempty word in right multiplications is a derivation. Indeed, for one right multiplication the derivation rule
reduces to
\[
R_z[a,b]=[R_za,b],
\]
because $R_zb\in A^2\subseteq\mathcal N$. Iterating gives the same identity for a word $D$, while $D(b)\in A^2$ makes
$[a,D(b)]=0$. Choose a word of length $c-2$ whose restriction to $A^2$ is nonzero; such a word exists
because $A^c$ is spanned by iterated right multiplications of brackets. It maps $A^2$ into $A^c\subseteq Z(A)$,
so Lemma~\ref{lem:min-projection} applies.

If $d_1=1$, write $A=\C e+A^2$. Expansion modulo $A^3$ gives $A^2\subseteq\C[e,e]+A^3\subseteq\mathcal N+A^3$. If $d_2=1$
and $[x,x]\notin A^3$, then $A^2=\C[x,x]+A^3$, and the same argument applies.
\end{proof}
\begin{lemma}\label{lem:min-split}
Let $m\ge1$, $A^{m+3}=0\ne A^{m+2}$, and suppose
\[
[A^{i+1},A^{j+1}]=0\qquad(i,j\ge1,\ i+j=m).
\]
Then every word of $m$ right multiplications is a derivation, and $A$ admits a proper local derivation.
\end{lemma}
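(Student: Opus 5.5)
The plan is to verify the derivation property of words by the general Leibniz rule for compositions of derivations, then take a word that is nonzero on $A^2$ and apply Lemma~\ref{lem:min-projection}.

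\textbf{Step 1: words of length $m$ are derivations.} Let $W=R_{z_m}\circ\cdots\circ R_{z_1}$. Each $R_{z_k}$ is a derivation, so expanding $W([a,b])$ by the derivation rule step by step gives
\[
W([a,b])=\sum_{S\subseteq\{1,\dots,m\}}[W_S(a),W_{S^c}(b)].
\]
Here $W_S$ denotes the ordered subword of $W$ consisting of the factors indexed by $S$, and the empty word is the identity.

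Each right multiplication maps $A^j$ into $A^{j+1}$. Hence $W_S(a)\in A^{|S|+1}$ and $W_{S^c}(b)\in A^{m-|S|+1}$.

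Whenever $S$ and $S^c$ are both nonempty, put $i=|S|\ge1$ and $j=m-|S|\ge1$, so $i+j=m$. The corresponding term lies in $[A^{i+1},A^{j+1}]$, which is $0$ by hypothesis. Only $S=\{1,\dots,m\}$ and $S=\emptyset$ survive, and these give
\[
W([a,b])=[W(a),b]+[a,W(b)].
\]
So $W$ is a derivation. This uses only bilinearity and the fact that each factor is a derivation, not skew-symmetry.

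\textbf{Step 2: some word has nonzero central image on $A^2$.} Since $W$ has length $m$, we get $W(A^2)\subseteq A^{m+2}$. The class is $m+2$, so $A^{m+2}\subseteq Z(A)$.

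By induction from $A^{j+1}=[A^j,A]$, the space $A^{j+1}$ is spanned by the elements $R_{x_{j}}\cdots R_{x_2}[x_0,x_1]$. Thus $A^{m+2}$ is spanned by images of elements $[x_0,x_1]\in A^2$ under words of $m$ right multiplications. Since $A^{m+2}\ne0$, some such word $W$ satisfies $W(A^2)\ne0$. Here $m\ge1$ guarantees the word is nonempty, so Step~1 applies to it.

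\textbf{Step 3: conclude.} The word $W$ is a derivation with $0\ne W(A^2)\subseteq Z(A)$, which is condition~\eqref{eq:min-K}. Lemma~\ref{lem:min-projection} then shows that $W\pi$ is a proper local derivation.

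The only delicate point is the bookkeeping in the expansion of Step~1: the filtration~\eqref{eq:min-filtration} must be applied to the correct ordered subwords. It is routine once each partial word is placed in the right term of the lower central series.
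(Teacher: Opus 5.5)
Your proof is correct and takes essentially the same approach as the paper. The paper proves by induction that the defect of a word of length $k$ lies in $C_k=\sum_{i+j=k,\ i,j\ge1}[A^{i+1},A^{j+1}]$, which is what your closed-form expansion $W([a,b])=\sum_{S}[W_S(a),W_{S^c}(b)]$ shows directly, and your concluding step (a word of length $m$ nonzero on a bracket because $A^{m+2}\ne0$, then Lemma~\ref{lem:min-projection}) is the same as the paper's.
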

\begin{proof}
Let $C_k$ be the sum of $[A^{i+1},A^{j+1}]$ over $i,j\ge1$ with $i+j=k$; set $C_1=0$. The derivation
rule for $R_y$ gives $R_y(C_k)\subseteq C_{k+1}$. Inductively, the defect of a word $D$ of length $k$ lies in $C_k$: the
assertion is true for $k=1$, and the defect of $R_yD$ is
\[
[Da,R_yb]+[R_ya,Db]+R_y\bigl(D[a,b]-[Da,b]-[a,Db]\bigr),
\]
which belongs to $C_{k+1}$. Since $C_m=0$, a word of length $m$ is a derivation. Such words send $A^2$ into
$A^{m+2}\subseteq Z(A)$. Some word is nonzero on a bracket because $A^{m+2}\ne0$. Apply Lemma~\ref{lem:min-projection}.
\end{proof}
\begin{proposition}\label{prop:min-smallclass}
Every non-abelian nilpotent right Leibniz algebra of class at most four admits a
proper local derivation. An algebra of class five does also, unless
\begin{equation}\label{eq:min-Y}
[A^2,A^3]=0,\qquad[A^3,A^2]\ne0.
\end{equation}
\end{proposition}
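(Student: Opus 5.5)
The plan is to split by the nilpotency class $c$ and reduce each case to Lemma~\ref{lem:min-ann} or Lemma~\ref{lem:min-split}. Both lemmas produce a derivation satisfying \eqref{eq:min-K}, and then Lemma~\ref{lem:min-projection} gives a proper local derivation.

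For $c=2$ we have $[A,A^2]=A^3=0$, so Lemma~\ref{lem:min-ann} applies directly.

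For $c=3$ we use Lemma~\ref{lem:min-split} with $m=1$. Here $A^{4}=0\ne A^{3}$, and the index condition $i,j\ge1$, $i+j=1$ is empty. Hence every single right multiplication $R_y$ is a derivation. This is true anyway in a right Leibniz algebra. Each $R_y$ sends $A^2$ into $A^3\subseteq Z(A)$, and some $R_y$ is nonzero on $A^2$ because $A^3=[A^2,A]\ne0$.

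For $c=4$ we take $m=2$, so that $A^5=0\ne A^4$. The only pair is $i=j=1$, and the hypothesis becomes $[A^2,A^2]=0$. By \eqref{eq:min-filtration} we have $[A^2,A^2]\subseteq A^4\subseteq Z(A)$, but this bracket need not vanish. So the case splits:
\begin{itemize}
\item If $[A^2,A^2]=0$, Lemma~\ref{lem:min-split} gives the result.
\item If $[A^2,A^2]\ne0$, we build the central derivation directly, and this is where the main work lies.
\end{itemize}

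In the second subcase, first consider the defect of a word $R_zR_y$. By the computation in the proof of Lemma~\ref{lem:min-split}, this defect equals $[R_ya,R_zb]+[R_za,R_yb]$, which lies in $[A^2,A^2]$. This is a central bilinear correction. I would try to cancel it with a map $T:A\to Z(A)$ that is chosen with care on $A^2$. Alternatively, I would use an element $y\in A^2$ with $R_y\ne0$ on $A^2$, which exists since $[A^2,A^2]\ne0$. Then $R_y$ is itself a derivation, it maps $A$ into $[A,A^2]\subseteq A^3$, and it maps $A^2$ into $[A^2,A^2]\subseteq A^4\subseteq Z(A)$. This already satisfies \eqref{eq:min-K}, because $R_y(A^2)=[A^2,y]\ne0$ for a suitable choice of $y$. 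So the obstacle dissolves: one picks $y\in A^2$ with $[A^2,y]\ne0$.

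The same trick should handle class five. Take $m=3$, so the hypothesis of Lemma~\ref{lem:min-split} is $[A^2,A^3]=[A^3,A^2]=0$. If $[A^3,A^2]=0$, I would distinguish two situations:
\begin{itemize}
\item If $[A^2,A^3]=0$ as well, Lemma~\ref{lem:min-split} applies.
\item If $[A^2,A^3]\ne0$, choose $y\in A^3$ with $[A^2,y]\ne0$. Then $R_y(A^2)\subseteq[A^2,A^3]\subseteq A^5\subseteq Z(A)$ by \eqref{eq:min-filtration}, so $R_y$ satisfies \eqref{eq:min-K}.
\end{itemize}
The remaining configuration is $[A^2,A^3]=0$ and $[A^3,A^2]\ne0$, which is exactly the exception \eqref{eq:min-Y}.

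One step still needs checking, namely whether a nonzero $[A^2,A^2]$ in class five could obstruct the argument. If $[A^2,A^2]\ne0$ then some $[A^2,y]\ne0$ with $y\in A^2$, but $R_y(A^2)$ need only lie in $A^4$, which is not necessarily central. In that case I would pass to $R_{y'}$ with $y'\in A^3$, or compose $R_zR_y$ and use the defect formula. The defect of $R_zR_y$ lies in $[A^2,A^2]$, restricted to the graded pieces relevant to $A^3$-level terms. I expect this to reduce to the two conditions above, and this reduction is the main obstacle.
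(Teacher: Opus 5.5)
Your argument is correct and is the paper's proof: Lemma~\ref{lem:min-ann} in class two; a single $R_y$ in class three; in class four, $R_y$ with $y\in A^2$ when $[A^2,A^2]\ne0$ and Lemma~\ref{lem:min-split} with $m=2$ otherwise; in class five, $R_y$ with $y\in A^3$ when $[A^2,A^3]\ne0$ and Lemma~\ref{lem:min-split} with $m=3$ when both $[A^2,A^3]$ and $[A^3,A^2]$ vanish. The obstacle in your final paragraph is not real, so the proof is already complete: the $y\in A^3$ construction never uses $[A^3,A^2]=0$ (so it also covers the case where both brackets are nonzero, which your nesting left implicit), and Lemma~\ref{lem:min-split} with $m=3$ needs only $C_3=[A^2,A^3]+[A^3,A^2]=0$, because a length-three word has defect in $C_3$ and $[A^2,A^2]$ imposes no condition (a minor point: in class two you have only $[A,A^2]\subseteq A^3$, not equality, but the inclusion suffices).
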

\begin{proof}
The class-two case was proved in Lemma~\ref{lem:min-ann}. In class three, some $R_y$ is nonzero on $A^2$, and
its image there lies in $A^3\subseteq Z(A)$. In class four, if $[A^2,A^2]\ne0$, choose $w\in A^2$ with $R_w(A^2)\ne0$.
Otherwise apply Lemma~\ref{lem:min-split} with $m=2$. In class five, if $[A^2,A^3]\ne0$, choose $w\in A^3$ with
$R_w(A^2)\ne0$. Its image lies in $A^5\subseteq Z(A)$. If both $[A^2,A^3]$ and $[A^3,A^2]$ vanish, apply Lemma~\ref{lem:min-split}
with $m=3$.
\end{proof}

\subsection{Excluding the small class-five cases}
We now handle the only possible obstruction in
class five. The first lemma uses one-dimensional third and fourth lower central factors, with no
restriction on the number of generators.
\begin{lemma}\label{lem:min-T}
Suppose $A^6=0\ne A^5$ and $d_3=d_4=1$. If $[A^2,A^3]=0$, then $[A^3,A^2]=0$.
\end{lemma}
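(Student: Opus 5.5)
The plan is to reduce $[A^3,A^2]$ to brackets involving a single generator of $A^3$ modulo $A^4$, and then kill those brackets using $[A^2,A^3]=0$ together with the right Leibniz identity.

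\emph{Step 1: reduce to one generator.} By \eqref{eq:min-filtration} and $A^6=0$, we have $[A^4,A^2]\subseteq A^6=0$ and $[A^3,A^3]\subseteq A^6=0$. Since $d_3=1$, write $A^3=\C t+A^4$. Then
\[
[A^3,A^2]=[t,A^2],
\]
and it suffices to show $[t,a]=0$ for every $a\in A^2$. Since $[t,A^3]=0$, we may moreover take $a=[x,y]$ with $x,y\in A$.

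\emph{Step 2: rewrite $t$ as a bracket from $A^2$.} Because $A^3=[A^2,A]$ and $d_3=1$, we can choose $s\in A^2$ and $h\in A$ with $t\equiv[s,h]\pmod{A^4}$. Replacing $t$ by $[s,h]$ changes $[t,a]$ only by an element of $[A^4,A^2]=0$, so we may take $t=[s,h]$. The right Leibniz identity then gives
\[
[t,a]=[[s,h],a]=[s,[h,a]]+[[s,a],h].
\]
The first term lies in $[A^2,A^3]=0$, since $[h,a]\in[A,A^2]\subseteq A^3$. Hence $[t,a]=[[s,a],h]$, with $[s,a]\in[A^2,A^2]\subseteq A^4$.

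\emph{Step 3: use $d_4=1$.} Write $A^4=\C u+A^5$. Because $A^5\subseteq Z(A)$, only the $u$-component of $[s,a]$ matters. It therefore remains to show that the $u$-coefficient of $[A^2,A^2]$ is zero modulo $A^5$, or at least that it is annihilated after bracketing with $h$.

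To treat this, expand $[s,[x,y]]=[[s,x],y]-[[s,y],x]$. Here $[s,x]\in A^3=\C t+A^4$, so modulo $A^5$ this is a combination $\lambda(x)[t,y]-\lambda(y)[t,x]$, where $[t,\cdot]$ maps $A$ into $A^4$. In parallel, apply the hypothesis $[A^2,A^3]=0$ to brackets of the form $[[x,y],t]$ and to the symmetric combinations $[t,a]+[a,t]\in\mathcal N$. One can also write $u=[t,g]$ and compute
\[
[x,u]=[[x,t],g]-[[x,g],t]=[[x,t],g],
\]
which yields linear relations among the functionals $y\mapsto$ ($u$-coefficient of $[t,y]$) and $y\mapsto[u,y]\in A^5$. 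These relations, combined with the antisymmetry $[t,[x,y]]=-[t,[y,x]]$ (both $[x,y]+[y,x]$ and $[x,x]$ lie in $\mathcal N$), should force
\[
[t,[x,y]]=\lambda(x)\mu(y)-\lambda(y)\mu(x)
\]
to vanish, where $\mu(y)=[u,y]$.

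I expect this last bookkeeping step to be the main obstacle: showing that the rank-one data carried by $d_3=d_4=1$ leaves no room for a nonzero skew form $\lambda\wedge\mu$ once $[A^2,A^3]=0$ is imposed. My first attempt would be to pick a basis of $A$ modulo $A^2$ adapted to the kernel of $\lambda$, and to check the generators pairwise.
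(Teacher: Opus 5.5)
Your Steps 1 and 2 are correct. The formula you reach in Step 3, $[t,[x,y]]=\lambda(x)\mu(y)-\lambda(y)\mu(x)$, with $\lambda(x)$ the $u$-coefficient of $[t,x]$ and $\mu(x)=[u,x]\in A^5$, is exactly the paper's identity \eqref{eq:min-n}; your $t,u$ are the paper's $u,w$. But the real content of the lemma starts at that formula, and you stop there. The vanishing of this skew form is left as an ``expected obstacle,'' so the proposal does not prove the statement.

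The relations you list will not close this gap:
\begin{itemize}
\item antisymmetry in $x,y$ is already built into the formula;
\item $[t,a]\in\mathcal N$ is automatic, since $[t,a]\in A^5\subseteq Z(A)$;
\item $[x,u]=[[x,t],g]$ concerns the left products $[A,A^4]$, which never enter.
\end{itemize}
None of them relates $\mu$ to the functionals $z\mapsto[s,z]$ with $s\in A^2$. Without such a relation, checking $\lambda\wedge\mu$ on a basis adapted to $\ker\lambda$ has nothing to work with. You also use the letter $\lambda$ for two different functionals: the $t$-coefficient of $[s,\cdot]$ and the $u$-coefficient of $[t,\cdot]$. The missing step is precisely an interplay between these two.

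The missing idea is to use the identity behind your Step 2 for every $z\in A$, not only for $z=h$.
\begin{itemize}
\item For $s\in A^2$ and $a=[x_0,y_0]$, the hypothesis gives $[s,[a,z]]=0$, hence $[[s,a],z]=[[s,z],a]$.
\item Write $[s,z]\equiv\alpha(z)t\pmod{A^4}$ and $[s,a]\equiv\kappa u\pmod{A^5}$. Using $[A^4,A^2]=0$ and centrality of $A^5$, the identity becomes $\kappa\,\mu(z)=\alpha(z)\,n$ for all $z\in A$, where $n=[t,a]$.
\item Expanding $[s,[x_0,y_0]]$ gives $\kappa=\alpha(x_0)\lambda(y_0)-\alpha(y_0)\lambda(x_0)$.
\item Substitute $z=x_0$ and $z=y_0$ into $n=\lambda(x_0)\mu(y_0)-\lambda(y_0)\mu(x_0)$. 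This yields $\kappa n=-\kappa n$, so $n\neq0$ forces $\kappa=0$. This is exactly your desired vanishing of the $u$-coefficient of $[s,a]$.
\item With your normalization $[s,h]=t$, i.e.\ $\alpha(h)=1$, the relation at $z=h$ then gives $n=\kappa\mu(h)=0$, a contradiction.
\end{itemize}
The paper performs the same computation for an arbitrary $s\in A^2$. It concludes instead that $\alpha\equiv0$, i.e.\ $A^3=[A^2,A]\subseteq A^4$, contradicting $d_3=1$.
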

\begin{proof}
Choose $u\in A^3$, $w\in A^4$ with $A^3=\C u+A^4$ and $A^4=\C w+A^5$. Since $[A^4,A^2]=0$, it
suffices to show $[u,A^2]=0$. Suppose instead that $n=[u,p]\ne0$ for some bracket $p=[a,b]$; such a
bracket exists by linearity. Write
\[
[u,a]=c_aw+r_a,\qquad[u,b]=c_bw+r_b,\qquad r_a,r_b\in A^5.
\]
The Leibniz identity gives
\begin{equation}\label{eq:min-n}
n=c_a[w,b]-c_b[w,a].
\end{equation}
Fix $y\in A^2$, and write $[y,a]=\alpha_au+\ell_a$, $[y,b]=\alpha_bu+\ell_b$, where $\ell_a,\ell_b\in A^4$. Put $k=\alpha_ac_b-\alpha_bc_a$.

For arbitrary $x\in A$, write $[y,x]=\alpha u+\ell$, with $\ell\in A^4$. The hypothesis gives $[y,[p,x]]=0$, hence
\[
[[y,p],x]=[[y,x],p]=\alpha n.
\]
On the other hand, expand $[y,p]=[[y,a],b]-[[y,b],a]$. Modulo $A^5$ it is $kw$, so
\begin{equation}\label{eq:min-k}
k[w,x]=\alpha n.
\end{equation}

Applying this at $x=a,b$ and using~\eqref{eq:min-n} yields
\[
kn=(c_a\alpha_b-c_b\alpha_a)n=-kn.
\]
Since $n\ne0$, we obtain $k=0$. Equation~\eqref{eq:min-k} then forces $\alpha=0$ for every $x$. As $y\in A^2$ was
arbitrary, $A^3=[A^2,A]\subseteq A^4$, contradicting nilpotency and $A^5\ne0$.
\end{proof}
The remaining lemmas concern two generators. Suppose
\begin{equation}\label{eq:min-alt}
A=\C e_1+\C e_2+A^2,\qquad[x,x]\in A^3\quad(x\in A).
\end{equation}
Set $p=[e_1,e_2]$, $s_i=[e_i,e_i]$, and $t=[e_1,e_2]+[e_2,e_1]$. Polarization gives $s_1,s_2,t\in A^3$. Expanding
brackets modulo the next lower central term gives
\begin{equation}\label{eq:min-decomp}
A^2=\C p+A^3,\qquad A^3=\C[p,e_1]+\C[p,e_2]+A^4.
\end{equation}
For example, if $x=ae_1+be_2+r$ and $y=a'e_1+b'e_2+r'$ with $r,r'\in A^2$, then $[x,y]\equiv(ab'-ba')p$
$(\bmod\ A^3)$, proving the first equality. The second follows by expanding $[cp+r_3,ae_1+be_2+r_2]$
modulo $A^4$.
\begin{lemma}\label{lem:min-V}
Assume~\eqref{eq:min-alt}, $A^6=0\ne A^5$ and $d_4=1$. If $[A^2,A^3]=0$, then $[A^3,A^2]=0$.
\end{lemma}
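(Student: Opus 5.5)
We reduce the claim to finitely many brackets and then use the hypothesis $[A^2,A^3]=0$ through the right Leibniz identity. Since $A^6=0$, \eqref{eq:min-filtration} gives $[A^4,A^2]\subseteq A^6=0$ and $[A^3,A^3]\subseteq A^6=0$. By \eqref{eq:min-decomp}, $A^3=\C u_1+\C u_2+A^4$ with $u_i=[p,e_i]$, and $A^2=\C p+A^3$. Hence $[A^3,A^2]$ is spanned by $[u_1,p]$ and $[u_2,p]$, and it suffices to prove $[u_i,p]=0$ for $i=1,2$.

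Next I would express $[u_i,p]$ in two ways. First, $u_i\in A^3$ and $p\in A^2$, so the hypothesis gives $[p,u_i]=0$. Expanding $[p,[p,e_i]]$ by the Leibniz identity then yields
\[
[u_i,p]=[[p,p],e_i].
\]
Now $[p,p]\in A^4$. Using $d_4=1$, write $A^4=\C w+A^5$ and $[p,p]\equiv\lambda w \pmod{A^5}$. Since $A^5$ is central, this gives $[u_i,p]=\lambda[w,e_i]$. Second, write $[u_i,e_j]\equiv c_{ij}w\pmod{A^5}$. Expanding $[u_i,[e_1,e_2]]$ directly, exactly as in the proof of Lemma~\ref{lem:min-T}, gives
\[
[u_i,p]=c_{i1}[w,e_2]-c_{i2}[w,e_1].
\]
Expanding $[p,p]=[[p,e_1],e_2]-[[p,e_2],e_1]$ also gives $\lambda=c_{12}-c_{21}$. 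Comparing the two expressions produces two linear relations between $[w,e_1]$ and $[w,e_2]$. These two vectors span $A^5=[A^4,A]\ne0$, since $A^5$ is central and $A^4=\C w+A^5$.

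The remaining step is to show $\lambda=0$, and I expect this to be the main obstacle; it would immediately give $[u_i,p]=0$. I would first imitate Lemma~\ref{lem:min-T}: apply $[A^2,A^3]=0$ in the form $[y,[q,x]]=0$ for $y,q\in A^2$ and $x\in\{e_1,e_2\}$. Here $q$ may be $p$ or a bracket $[e_i,e_j]$, whose symmetric parts $s_i,t$ lie in $A^3\cap\mathcal N$. This gives identities $[[y,q],x]=[[y,x],q]$ modulo $A^6=0$. Substituting $y=p$ and the decompositions above, every quantity becomes a combination of $[w,e_1]$ and $[w,e_2]$ with coefficients polynomial in the $c_{ij}$ and $\lambda$.

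From the resulting system I aim to derive $\lambda[w,e_i]=-\lambda[w,e_i]$ for each $i$, by a sign argument like $kn=-kn$ in Lemma~\ref{lem:min-T}. Since some $[w,e_i]\ne0$, this would force $\lambda=0$. If the system instead leaves a nonzero determinant in the $c_{ij}$, I would show that it forces $A^4=[A^3,A]\subseteq A^5$, contradicting $A^5\ne0$, as at the end of the proof of Lemma~\ref{lem:min-T}. In either case $[u_i,p]=0$ for $i=1,2$, hence $[A^3,A^2]=0$.
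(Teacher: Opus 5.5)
Your reduction is correct and coincides with the paper's opening: $[A^3,A^2]$ is spanned by $[u_1,p],[u_2,p]$, and the hypothesis gives $[u_i,p]=[[p,p],e_i]$. Everything then hinges on $[p,p]\in A^5$, i.e.\ $\lambda=0$. That is exactly the step you leave as a hedged plan, and the plan as stated cannot succeed.

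Modulo $A^6=0$, every instance $[y,[q,x]]=0$ with $y,q\in A^2$ collapses to the one relation $[[p,p],x]=[[p,x],p]$ that you already used. Taking $q=[e_2,e_1]=t-p$ only reproduces it with a sign, using $[p,t]=0$ and $[A^3,A^3]=0$. All other choices are trivial by the hypothesis and $A^6=0$. Together with your expansion of $[u_i,p]$ and $\lambda=c_{12}-c_{21}$, you obtain only
\[
(2c_{12}-c_{21})[w,e_1]-c_{11}[w,e_2]=0,\qquad c_{22}[w,e_1]+(c_{12}-2c_{21})[w,e_2]=0.
\]
This system does not exclude $\lambda\ne0$. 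For $c_{11}=c_{12}=1$, $c_{21}=0$, $c_{22}=-2$ one has $\lambda=1$, and both equations reduce to $[w,e_2]=2[w,e_1]$, which allows $[w,e_1]\ne0$. So neither a sign argument nor a determinant argument can be extracted from these relations; genuinely new relations are needed.

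The missing idea is a vanishing that becomes available only once you assume $\lambda\ne0$ and aim for a contradiction.

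\begin{itemize}
\item Then $d_4=1$ gives $A^4=\C[p,p]+A^5$. As a square, $[p,p]\in\mathcal N$, and $A^5$ is central, so $[A,A^4]=0$.
\item For $v\in A^3$, the identity $[e_i,[v,e_j]]=[[e_i,v],e_j]-[[e_i,e_j],v]$ now gives $[[e_i,v],e_j]=0$. This controls \emph{left} products, which your plan never touches.
\item Compute $[e_i,u_j]=[[e_i,p],e_j]-[[e_i,e_j],p]$ modulo $A^5$, using $[e_1,p]=[s_1,e_2]-u_1$, $[e_2,p]=[t,e_2]-u_2-[s_2,e_1]$ and $[p,p]=[u_1,e_2]-[u_2,e_1]$. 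This gives $[e_1,u_1]\equiv-c_{11}w$, $[e_2,u_2]\equiv-c_{22}w$, $[e_1,u_2]\equiv(c_{21}-2c_{12})w$ and $[e_2,u_1]\equiv(c_{12}-2c_{21})w$.
\item Right multiplication by $e_k$ kills $A^5$, so $c_{11}[w,e_k]=c_{22}[w,e_k]=(c_{21}-2c_{12})[w,e_k]=(c_{12}-2c_{21})[w,e_k]=0$.
\item The last two relations have determinant $3$, so all $c_{ij}[w,e_k]=0$. Hence $[[u_i,e_j],e_k]=0$ for all $i,j,k$, which forces $A^5=0$, a contradiction.
\end{itemize}

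This is how the paper closes the case $[p,p]\notin A^5$, and it is the argument your proposal would need to supply.
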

\begin{proof}
Put $u_i=[p,e_i]$. From $[p,[p,e_i]]=0$ we get
\begin{equation}\label{eq:min-ui}
[u_i,p]=[[p,p],e_i].
\end{equation}
If $[p,p]\in A^5$, these brackets vanish. The decompositions~\eqref{eq:min-decomp} and~\eqref{eq:min-filtration} then imply $[A^3,A^2]=0$.

Suppose $[p,p]\notin A^5$. Since $d_4=1$, $A^4=\C[p,p]+A^5$. A square belongs to $\mathcal N$, and $A^5\subseteq Z(A)$;
hence $[A,A^4]=0$. For $v\in A^3$ the Leibniz identity consequently gives
\[
[[e_i,v],e_j]=[[e_i,e_j],v]=0\qquad(i,j=1,2).
\]
The last equality uses $[A^2,A^3]=0$. Write $c=[u_1,e_2]$, $d=[u_2,e_1]$. The exact identities
\[
[e_1,p]=[s_1,e_2]-u_1,\quad[e_2,p]=[t,e_2]-u_2-[s_2,e_1],\quad[p,p]=c-d
\]
give, modulo $A^5$,
\[
\begin{aligned}
{}[e_1,u_1]&\equiv-[u_1,e_1],&[e_2,u_2]&\equiv-[u_2,e_2],\\
[e_1,u_2]&\equiv-2c+d,&[e_2,u_1]&\equiv c-2d.
\end{aligned}
\]
Indeed $[e_i,u_j]=[[e_i,p],e_j]-[[e_i,e_j],p]$, and the omitted terms belong to $A^5$. Right multiplication
annihilates these error terms. Thus, for $k=1,2$,
\[
[[u_1,e_1],e_k]=[[u_2,e_2],e_k]=[-2c+d,e_k]=[c-2d,e_k]=0.
\]
The coefficient matrix with rows $(-2,1)$ and $(1,-2)$ has determinant three. Therefore $[c,e_k]=[d,e_k]=0$.
We have proved $[[u_i,e_j],e_k]=0$ for all $i,j,k$. Replacing $e_j,e_k$ by arbitrary elements
introduces only terms in $A^6=0$. Using~\eqref{eq:min-decomp} gives $[[A^3,A],A]=0$, or $A^5=0$, a contradiction.
\end{proof}
\begin{lemma}\label{lem:min-d3}
Assume~\eqref{eq:min-alt} and $d_3=1$. Generators can be chosen so that, with $p=[e_1,e_2]$ and
$u=[p,e_1]$,
\[
[p,e_2]\in A^4,\qquad A^3=\C u+A^4.
\]
For these generators $[u,e_2]\in A^5$ and $A^4=\C[u,e_1]+A^5$. In particular $d_4\le1$.
\end{lemma}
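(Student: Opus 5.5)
The plan is to first normalise the generators by a linear change in $\C e_1+\C e_2$, and then extract the remaining claims from the right Leibniz identity modulo $A^5$.

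\textbf{Step 1: choosing the generators.} By \eqref{eq:min-decomp}, $A^3=\C[p,e_1]+\C[p,e_2]+A^4$. The map $x\mapsto[p,x]+A^4$ from $\C e_1+\C e_2$ to $A^3/A^4$ is linear, and it is onto because of this decomposition. Since $d_3=1$, its kernel is one-dimensional. Pick $e_2'$ spanning that kernel and any $e_1'$ outside it. Together they span $\C e_1+\C e_2$.

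Hypothesis \eqref{eq:min-alt} does not depend on the choice of generators. The new bracket satisfies $p'=[e_1',e_2']\equiv\delta p\pmod{A^3}$, where $\delta\neq0$ is the determinant of the change of basis. For $r\in A^3$ we have $[r,e_2']\in A^4$ by \eqref{eq:min-filtration}, so
\[
[p',e_2']\equiv\delta[p,e_2']\equiv0\pmod{A^4}.
\]
Since $A^3/A^4$ is spanned by $[p',e_1'],[p',e_2']$, the element $u=[p',e_1']$ spans $A^3$ modulo $A^4$. From now on we drop the primes.

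\textbf{Step 2: showing $[u,e_2]\in A^5$.} This is the main point. The right Leibniz identity with $x=p$ gives
\[
[p,p]=[p,[e_1,e_2]]=[u,e_2]-[[p,e_2],e_1]\equiv[u,e_2]\pmod{A^5},
\]
because $[p,e_2]\in A^4$. Next, rearranging the identity as $[[x,y],z]=[x,[y,z]]+[[x,z],y]$ gives
\[
[p,p]=[[e_1,e_2],p]=[e_1,[e_2,p]]+[[e_1,p],e_2].
\]
We evaluate both terms modulo the appropriate lower central terms.

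For the first term, compute $[e_2,p]=[[e_2,e_1],e_2]-[[e_2,e_2],e_1]$. Here $[e_2,e_1]=t-p$, and $s_2,t\in A^3$. Hence
\[
[e_2,p]\equiv-[p,e_2]\equiv0\pmod{A^4},
\]
so $[e_1,[e_2,p]]\in A^5$ by \eqref{eq:min-filtration}.

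For the second term, compute $[e_1,p]=[s_1,e_2]-u$, which is congruent to $-u$ modulo $A^4$. Therefore
\[
[[e_1,p],e_2]\equiv-[u,e_2]\pmod{A^5}.
\]

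Comparing the two expressions for $[p,p]$ gives $[u,e_2]\equiv-[u,e_2]$, that is, $2[u,e_2]\in A^5$. This proves the claim. The delicate part is tracking which error terms land in $A^4$ versus $A^5$. The key fact is that $[e_2,p]$ vanishes modulo $A^4$, which comes from \eqref{eq:min-alt} combined with the normalisation $[p,e_2]\in A^4$.

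\textbf{Step 3: the description of $A^4$.} We have $A^4=[A^3,A]$, with $A^3=\C u+A^4$ and $A=\C e_1+\C e_2+A^2$. Expanding brackets modulo $A^5$ using \eqref{eq:min-filtration} gives
\[
A^4=\C[u,e_1]+\C[u,e_2]+A^5=\C[u,e_1]+A^5.
\]
Hence $d_4\le1$.
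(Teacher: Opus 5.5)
Your proof is correct and follows the paper's argument. The normalisation places $e_2$ in the kernel of $x\mapsto[p,x]+A^4$; the paper does this with the explicit substitution $f_2=\alpha e_2-\beta e_1$ and a case split, which your kernel argument streamlines. Then two Leibniz expansions of $[p,p]$ modulo $A^5$ give $2[u,e_2]\in A^5$, followed by the same expansion of $[A^3,A]$. The only cosmetic difference is that you expand $[[e_1,e_2],p]$ through $[e_1,[e_2,p]]$. This needs the extra reduction $[e_2,p]\equiv-[p,e_2]\pmod{A^4}$, whereas the paper uses $[e_1,[p,e_2]]\in A^5$ directly.
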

\begin{proof}
Choose $u_0\in A^3\setminus A^4$, and write $[p,e_1]=\alpha u_0+r_1$, $[p,e_2]=\beta u_0+r_2$, where $r_i\in A^4$. At least
one of $\alpha,\beta$ is nonzero by~\eqref{eq:min-decomp}. If $\alpha\ne0$, replace the generators by
\[
f_1=e_1,\qquad f_2=\alpha e_2-\beta e_1.
\]
This is an invertible change modulo $A^2$. Since $[f_1,f_2]=\alpha p-\beta s_1$, one obtains
\[
[[f_1,f_2],f_2]\in A^4,\qquad [[f_1,f_2],f_1]\equiv\alpha^2u_0\pmod{A^4}.
\]

If $\alpha=0$, then $\beta\ne0$; interchange $e_1,e_2$ and use $[e_2,e_1]=t-p$. The corresponding two brackets are
in $A^4$ and congruent to $-\beta u_0$ modulo $A^4$, respectively. Thus this case gives the same normalization.
Rename the new generators $e_1,e_2$.

Now $[e_1,[p,e_2]]\in A^5$. The Leibniz identity and $[e_1,p]=[s_1,e_2]-u$ give
\[
[e_1,[p,e_2]]=[[e_1,p],e_2]-[p,p]\equiv-[u,e_2]-[p,p]\pmod{A^5}.
\]
Also $[p,p]=[u,e_2]-[[p,e_2],e_1]\equiv[u,e_2]\pmod{A^5}$. Hence $2[u,e_2]\in A^5$. Finally expand the
generators of $[A^3,A]$ using $A^3=\C u+A^4$ and the two generators modulo $A^2$. All terms except
multiples of $[u,e_1]$ lie in $A^5$, proving the last assertion.
\end{proof}

\subsection{The remaining class-six case}
The dimension bound will leave one class-six dimension
vector. The following calculation handles it inside the original algebra and constructs the required
derivation there.
\begin{lemma}\label{lem:min-six}
Suppose~\eqref{eq:min-alt}, $A^7=0\ne A^6$ and
\[
(d_1,d_2,d_3,d_4,d_5,d_6)=(2,1,1,1,1,1).
\]
Then $A$ admits a proper local derivation.
\end{lemma}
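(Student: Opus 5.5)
The plan is to produce a derivation $D$ with $0\ne D(A^2)\subseteq A^6\subseteq Z(A)$ and then apply Lemma~\ref{lem:min-projection}. Every lower central factor from the second one onward is one-dimensional and the algebra is two-generated. So I first fix a normalized basis and reduce to a short list of structure constants.

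\textbf{Normalization.} By Lemma~\ref{lem:min-d3} I choose generators with $p=[e_1,e_2]$, $u=[p,e_1]$, $[p,e_2]\in A^4$ and $[u,e_2]\in A^5$. I then set $v=[u,e_1]$, which spans $A^4$ modulo $A^5$. The same argument as at the end of Lemma~\ref{lem:min-d3}, pushed two steps further, should give $A^5=\C w+A^6$ with $w=[v,e_1]$, and $A^6=\C z$ with $z=[w,e_1]$. This gives the basis
\[
e_1,\ e_2,\ p,\ u,\ v,\ w,\ z.
\]
Using \eqref{eq:min-filtration}, every bracket of weight at least seven vanishes. I record the remaining brackets modulo higher terms with unknown coefficients, for example $[p,e_2]=\beta v+\dots$, $[u,e_2]=\gamma w+\dots$ and $[p,p]$. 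The squares $s_1,s_2,t$ lie in $A^3$, and by stability of $\mathcal N$ their right multiples lie in $\mathcal N$. I will impose the right Leibniz identity on triples of basis elements. Only triples of total weight at most six matter, so this is a finite computation.

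\textbf{Case split.} Lemma~\ref{lem:min-split} with $m=4$ applies as soon as $[A^2,A^4]$, $[A^3,A^3]$ and $[A^4,A^2]$ all vanish. Each of these lies in $A^6=\C z$, and by weight each is spanned by a single bracket: $[p,v]$, $[u,u]$ and $[v,p]$ respectively. If $[p,v]\ne0$, then $R_v$ is a derivation with $R_v(A^2)\subseteq A^6$ and $R_v(p)\ne0$, and Lemma~\ref{lem:min-projection} finishes. So the real work is to show that $[p,v]=0$ forces $[u,u]=0$ and $[v,p]=0$, or else to handle the exceptional parameter values directly. I would derive these implications from Leibniz identities such as
\[
[p,[u,e_1]]=[[p,u],e_1]-[[p,e_1],u],
\]
together with its variants for $[u,[p,e_1]]$ and $[v,[e_1,e_2]]$, in the spirit of Lemma~\ref{lem:min-T}. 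The idea is to express $[u,u]$ and $[v,p]$ as multiples of $[p,v]$ plus terms that vanish by the one-dimensionality of the layers.

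\textbf{Main obstacle and fallback.} The main obstacle is the residual case where $[p,v]=0$ but one of $[u,u]$ or $[v,p]$ is nonzero. In that case no right multiplication sends $A^2$ into the centre, and Lemma~\ref{lem:min-split} does not apply. For this case I would construct $D$ directly on generators. I would set $D(e_1)=\lambda_1 w+\mu_1 z$, $D(e_2)=\lambda_2 w+\mu_2 z$ and extend $D$ by the derivation rule. The conditions $D(A^2)\subseteq\C z$ and $D(p)\ne0$ then become a small linear system in $\lambda_i,\mu_i$, whose consistency conditions are precisely the Leibniz relations already computed. If that ansatz fails, I would replace it by a composition $R_{e_1}R_u$ corrected by such a generator-defined map, whose derivation defect lies in $[A^3,A^3]$. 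I expect this residual computation to be the longest step of the argument.
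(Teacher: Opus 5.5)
\textbf{There is a genuine gap: the decisive step is missing.} Your case split is the same as the paper's. If $[p,v]\ne0$, then $R_v$ does the job. Otherwise Lemma~\ref{lem:min-split} with $m=4$ applies once $[u,u]=[v,p]=0$. You never prove this last implication, and the mechanism you propose does not prove it.

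Put $P=[[v,e_2],e_1]$ and $Q=[[v,e_1],e_2]$. Expand the three identities you list, together with the auxiliary expansions $[p,u]=[[p,p],e_1]-[u,p]$ and $[u,p]=[v,e_2]-[[u,e_2],e_1]$. Using $[u,e_2]\in A^5$, $[p,p]\in A^5$ and $A^7=0$, one gets
\[
[v,p]=Q-P,\qquad [u,u]=2P-Q,\qquad [p,v]=Q-3P.
\]
Hence $[p,v]=0$ only gives $Q=3P$. This leaves $[u,u]=-P$ and $[v,p]=2P$, which are multiples of $P$, not of $[p,v]$. These three identities alone do not determine $P$.

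The missing idea is a fourth relation combined with the hypothesis $A^6\ne0$. Since $[p,e_2]\in A^4$, the bracket $[u,[p,e_2]]$ is zero. Expanding it with the formula for $[u,p]$ gives $[[v,e_2],e_2]=0$. Now write $[v,e_2]=\kappa z+r$ with $A^5=\C z+A^6$ and $r\in A^6$.
\begin{itemize}
\item If $\kappa=0$, then $P=0$, and so $Q=0$.
\item If $\kappa\ne0$, then $[z,e_2]=0$. Hence $Q=0$, because $[v,e_1]\in\C z+A^6$. Hence $P=0$, hence $[z,e_1]=0$. Then $[A^5,A]=0$, contradicting $A^6\ne0$.
\end{itemize}
Thus your residual case is empty, and all three brackets vanish.

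\textbf{The fallback is not an argument.} Because this step is missing, you treat the residual case as possibly nonempty.
\begin{itemize}
\item With $D(e_i)\in A^5$, the derivation rule forces $D(A^3)=0$ and $D(p)=\lambda_1[w,e_2]+\lambda_2[e_1,w]$. The relations $s_1,s_2,t\in A^3$ impose three linear conditions, in the one-dimensional space $A^6$, on the two unknowns $\lambda_1,\lambda_2$. Nothing in your sketch shows that these admit a solution with $D(p)\ne0$.
\item Your alternative is misdescribed. The defect of $R_{e_1}R_u$ is $[R_ua,R_{e_1}b]+[R_{e_1}a,R_ub]$, which lies in $[A^4,A^2]+[A^2,A^4]$, not in $[A^3,A^3]$.
\end{itemize}

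\textbf{The normalization is unjustified, and unnecessary.} Your normalization $A^5=\C[v,e_1]+A^6$, $A^6=\C[[v,e_1],e_1]$ does not follow by pushing Lemma~\ref{lem:min-d3} further. The analogue of the computation giving $2[u,e_2]\in A^5$ is to expand $[e_1,[u,e_2]]\in A^6$. That expansion collapses to $-[v,e_2]+[v,e_2]\equiv0\pmod{A^6}$ and gives no information. In the case $[p,v]=0$ the normalization does hold, but as an output of the argument above rather than an input. It is not needed anyway: any $z$ spanning $A^5$ modulo $A^6$ works.
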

\begin{proof}
Normalize the generators by Lemma~\ref{lem:min-d3}. Put $p=[e_1,e_2]$, $u=[p,e_1]$, $w=[u,e_1]$, and choose
$z\in A^5$ with $A^5=\C z+A^6$. We have
\[
A^2=\C p+A^3,\quad A^3=\C u+A^4,\quad A^4=\C w+A^5,\quad[u,e_2]\in A^5.
\]
Define
\[
P=[[w,e_2],e_1],\qquad Q=[[w,e_1],e_2].
\]
All terms of total lower central degree at least seven vanish. From
\[
[u,[p,e_2]]=[[u,p],e_2]-[[u,e_2],p]
\]
and $[u,p]=[w,e_2]-[[u,e_2],e_1]$, we obtain
\begin{equation}\label{eq:min-six-zero}
[[w,e_2],e_2]=0.
\end{equation}
Moreover,
\begin{equation}\label{eq:min-six-PQ}
[w,p]=Q-P,\qquad[u,u]=2P-Q,\qquad[p,w]=Q-3P.
\end{equation}
The first equality is the Leibniz identity. For the second, expand $[u,[p,e_1]]$ and note $[[u,p],e_1]=P$.
For the third, expand $[p,[u,e_1]]$. Here $[p,u]=[[p,p],e_1]-[u,p]$ and $[p,p]\in A^5$ by Lemma~\ref{lem:min-d3};
consequently $[[p,u],e_1]=-P$.

If $[p,w]\ne0$, the derivation $R_w$ maps $A^2$ into $A^6\subseteq Z(A)$ and is nonzero there. Apply Lemma~\ref{lem:min-projection}.

Suppose instead that $[p,w]=0$, so $Q=3P$. Write $[w,e_2]=\beta z+r$ with $r\in A^6$. If $\beta=0$,
then $P=0$ and $Q=0$. If $\beta\ne0$, \eqref{eq:min-six-zero} gives $[z,e_2]=0$. Since $[w,e_1]\in\C z+A^6$, this implies
$Q=0$, hence $P=0$. Now $P=\beta[z,e_1]$ gives $[z,e_1]=0$. Products of $z\in A^5$ with $A^2$ vanish, and
$[A^6,A]=0$. Thus $[A^5,A]=0$, contradicting $A^6\ne0$. This excludes $\beta\ne0$, and in all remaining
cases $P=Q=0$.

By~\eqref{eq:min-six-PQ}, $[p,w]=[u,u]=[w,p]=0$. The displayed decompositions then imply
\[
[A^2,A^4]=[A^3,A^3]=[A^4,A^2]=0;
\]
all other contributions have degree at least seven. Lemma~\ref{lem:min-split} with $m=4$ completes the proof.
\end{proof}

\subsection{The lower bound on the dimension}
The preceding structural lemmas now yield an exhaustive
argument in dimensions at most seven.
\begin{theorem}\label{thm:min-small}
Every finite-dimensional non-abelian nilpotent complex right Leibniz algebra $A$ with
$\dim A\le7$ admits a proper local derivation.
\end{theorem}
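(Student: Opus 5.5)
By Proposition~\ref{prop:min-smallclass}, only classes $c\ge5$ need attention, and within class five only algebras satisfying \eqref{eq:min-Y} need attention. The plan is a case analysis on the dimension vector $(d_1,\dots,d_c)$. Since $d_j\ge1$ for $1\le j\le c$, $\dim A\le7$ forces $c\le7$ and $\sum d_j\le7$. We first remove the cases covered by Lemma~\ref{lem:min-ann}. If $d_1=1$, that lemma applies directly. If some square lies outside $A^3$ and $d_2=1$, it also applies. Otherwise we need to show $d_1\ge2$, and then reduce to the two-generator alternating situation \eqref{eq:min-alt} wherever possible.

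\textbf{Class five.} Here $\sum_{j\le5}d_j\le7$ with every $d_j\ge1$, and $d_1\ge2$ after the first reduction. So either $d_1=2$ with exactly one extra unit among $d_2,\dots,d_5$, or $d_1=3$ and all other $d_j=1$.

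If $d_3=d_4=1$, Lemma~\ref{lem:min-T} shows that $[A^2,A^3]=0$ forces $[A^3,A^2]=0$. This contradicts \eqref{eq:min-Y}, so Proposition~\ref{prop:min-smallclass} applies. The only vectors with $d_3$ or $d_4$ larger than one are $(2,1,2,1,1)$ and $(2,1,1,2,1)$.

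For $(2,1,1,2,1)$ we have $d_3=1$. If every square lies in $A^3$, then \eqref{eq:min-alt} holds, and Lemma~\ref{lem:min-d3} gives $d_4\le1$, a contradiction. If some square lies outside $A^3$, Lemma~\ref{lem:min-ann} applies since $d_2=1$.

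For $(2,1,2,1,1)$ we have $d_4=1$. With two generators, the same dichotomy on squares gives either Lemma~\ref{lem:min-ann} or \eqref{eq:min-alt}. In the latter case Lemma~\ref{lem:min-V} contradicts \eqref{eq:min-Y}.

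\textbf{Classes six and seven.} Class seven forces $d_1=1$, which is handled by Lemma~\ref{lem:min-ann}. Class six forces $(2,1,1,1,1,1)$ once $d_1\ge2$. Then either some square lies outside $A^3$, so Lemma~\ref{lem:min-ann} applies because $d_2=1$, or \eqref{eq:min-alt} holds and Lemma~\ref{lem:min-six} applies.

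\textbf{Main obstacle.} The delicate point is the bookkeeping when $d_2=1$ but every square lies in $A^3$ and $d_1$ is not two. Two situations need care.

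First, when $d_1=3$, we must ensure that \eqref{eq:min-alt}, which is stated for two generators, is not needed. In class five this case has $d_3=d_4=1$, so Lemma~\ref{lem:min-T} already covers it without any restriction on generators. This is the point to verify carefully.

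Second, we must confirm that $d_2=1$ together with all squares lying in $A^3$ forces $d_1=2$. If $d_1\ge3$, the alternating form $A/A^2\times A/A^2\to A^2/A^3$ has rank even and at most one-dimensional image, so it still fits. But all non-handled class-$\ge5$ vectors with $d_1=3$ have $d_3=d_4=1$, so Lemma~\ref{lem:min-T} resolves them.

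I expect the main effort to be in checking that the enumeration of dimension vectors is complete.
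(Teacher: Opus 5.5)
Your proposal is correct and follows the paper's proof essentially step for step. You dispose of class at most four and of $d_1=1$, enumerate the class-five vectors, and apply Lemma~\ref{lem:min-T} whenever $d_3=d_4=1$. You then handle $(2,1,2,1,1)$ and $(2,1,1,2,1)$ by the square dichotomy together with Lemmas~\ref{lem:min-V} and~\ref{lem:min-d3}, and finish $(2,1,1,1,1,1)$ with Lemma~\ref{lem:min-six}.

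Two small remarks. With $d_1=2$ there is at most one extra unit, not exactly one, so $(2,1,1,1,1)$ is missing from your list; your $d_3=d_4=1$ branch covers it anyway. Your second ``main obstacle'' is moot, since \eqref{eq:min-alt} is only ever invoked for vectors with $d_1=2$.
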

\begin{proof}
If the nilpotency class is at most four, use Proposition~\ref{prop:min-smallclass}. If $d_1=1$, use Lemma~\ref{lem:min-ann}. We
can therefore assume that $c\ge5$ and $d_1\ge2$.

For $c=5$, strict decrease of the lower central series and $\sum_{i=1}^5d_i\le7$ leave exactly
\[
\begin{gathered}
(2,1,1,1,1),\ (3,1,1,1,1),\ (2,2,1,1,1),\\
(2,1,2,1,1),\ (2,1,1,2,1),\ (2,1,1,1,2).
\end{gathered}
\]
By Proposition~\ref{prop:min-smallclass}, it remains only to exclude~\eqref{eq:min-Y}. For the first, second, third and sixth vectors,
$d_3=d_4=1$, so Lemma~\ref{lem:min-T} excludes it.

For the two remaining vectors $d_1=2$ and $d_2=1$. If some square lies outside $A^3$, Lemma~\ref{lem:min-ann}
applies. Otherwise choose two generators modulo $A^2$ and~\eqref{eq:min-alt} holds. For $(2,1,2,1,1)$, Lemma~\ref{lem:min-V}
excludes~\eqref{eq:min-Y}. For $(2,1,1,2,1)$, Lemma~\ref{lem:min-d3} gives $d_4\le1$, contradicting $d_4=2$.

For $c\ge6$, the inequality $\dim A\ge2+(c-1)$ implies $c=6$, $\dim A=7$, and the vector
$(2,1,1,1,1,1)$. A square outside $A^3$ again gives Lemma~\ref{lem:min-ann}; otherwise apply Lemma~\ref{lem:min-six}. This
treats every possible dimension vector.
\end{proof}

\subsection{An eight-dimensional counterexample}
We next show that the dimension bound is sharp.
Let $A_8$ have basis $e_1,\ldots,e_8$ and the following nonzero products, with all omitted products zero:
\begin{equation}\label{eq:min-A8}
\begin{aligned}
{}[e_1,e_2]&=-e_3,&[e_2,e_1]&=e_3,&[e_2,e_2]&=e_4,\\
[e_2,e_3]&=e_5,&[e_4,e_1]&=e_5,&[e_4,e_2]&=e_6,\\
[e_4,e_3]&=e_7,&[e_5,e_2]&=e_7,&[e_6,e_1]&=2e_7,\\
[e_6,e_2]&=e_7,&[e_5,e_3]&=3e_8,&[e_6,e_3]&=e_8,\\
[e_7,e_1]&=3e_8,&[e_7,e_2]&=e_8.
\end{aligned}
\end{equation}
\begin{proposition}\label{prop:min-A8}
The algebra $A_8$ is a non-abelian non-Lie nilpotent right Leibniz algebra of class
five. It satisfies
\[
\LocDer(A_8)=\Der(A_8),\qquad\dim\Der(A_8)=7.
\]
\end{proposition}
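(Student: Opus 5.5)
Our plan has four steps: check that $A_8$ is a nilpotent right Leibniz algebra of class five, compute $\Der(A_8)$ explicitly, show it has dimension seven, and then prove that every local derivation is a derivation.

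\emph{Structure of $A_8$.} Non-abelian and non-Lie are immediate from $[e_2,e_2]=e_4\ne0$. We must verify the right Leibniz identity $[x,[y,z]]=[[x,y],z]-[[x,z],y]$ on basis triples. Each product raises a grading in which $e_1,e_2$ have degree $1$, $e_3,e_4$ degree $2$, $e_5,e_6$ degree $3$, $e_7$ degree $4$ and $e_8$ degree $5$. So only triples of total degree at most $5$ matter. Moreover, the left side vanishes unless $[y,z]\ne0$, which keeps the check short. The lower central series is then
\[
A^2=\langle e_3,\dots,e_8\rangle,\quad A^3=\langle e_5,\dots,e_8\rangle,\quad A^4=\langle e_7,e_8\rangle,\quad A^5=\langle e_8\rangle,
\]
so the class is five. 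The dimension vector is $(2,2,2,1,1)$, consistent with Theorem~\ref{thm:min-small}, and $[A^2,A^3]=0\ne[A^3,A^2]$ via $[e_5,e_3]=3e_8$, consistent with Proposition~\ref{prop:min-smallclass}.

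\emph{Computing $\Der(A_8)$.} A derivation is determined by $D(e_1)$ and $D(e_2)$, since $e_3,\dots,e_8$ are iterated brackets of the generators: $e_3=[e_2,e_1]$, $e_4=[e_2,e_2]$, $e_5=[e_4,e_1]$, $e_6=[e_4,e_2]$, $e_7=[e_5,e_2]$, $e_8=[e_7,e_2]$. We write $D(e_i)=\sum_k a_{ik}e_k$ for $i=1,2$, define $D$ on the other basis vectors by the derivation rule along these particular expressions, and impose all remaining relations, including the zero products. The expected outcome is a $7$-parameter family. Inner derivations $R_{e_1},\dots,R_{e_7}$ contribute, and the degree-zero (diagonal) part should be heavily constrained. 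For example, $[e_2,e_1]=e_3$ and $[e_1,e_2]=-e_3$ force $a_{12}$-type terms to interact with the asymmetric coefficients $2,3,1$. This should kill the generic diagonal torus and most outer degree-one maps. The coefficients $2$ and $3$ were evidently chosen so that these linear systems have small solution spaces.

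\emph{Local equals global.} Let $\Delta$ be a local derivation. First, $\Delta(e_i)=D_{e_i}(e_i)$ lies in a prescribed subspace for each basis vector; for instance, $\Delta(e_8)=0$ if every derivation kills $e_8$ or is forced in degree. Next, apply locality at well-chosen combinations such as $e_1+\lambda e_2$, $e_2+e_4$, $e_3+e_4$ and $e_5+e_6$. Each gives linear relations among the coordinates of $\Delta(e_i)$, because the same $7$ parameters must reproduce $\Delta$ at several points simultaneously. The aim is to show that the matrix of $\Delta$ lies in the $7$-dimensional space $\Der(A_8)$: we show each entry of $\Delta$ equals the corresponding linear expression in the parameters read off from $\Delta(e_1)$ and $\Delta(e_2)$. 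Concretely, we parametrize $\Der(A_8)$ by $(\alpha_1,\dots,\alpha_7)$ and write $D(x)=M(x)\alpha$ with $M(x)$ an $8\times7$ matrix linear in $x$. Locality means $\Delta(x)\in\operatorname{im}M(x)$ for all $x$, and we need to conclude that $\Delta=M(\cdot)\alpha$ for a single $\alpha$.

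\emph{Main obstacle.} The last step is the main obstacle. Locality only yields the membership $\Delta(x)\in\operatorname{im}M(x)$ pointwise, so we must pick test vectors $x=\sum t_ie_i$ and compare polynomial identities in $t$. The first thing to try is the following. Fix the parameters $\alpha$ from $\Delta(e_1),\Delta(e_2)$ (using generic $x=e_1+te_2$ if $\operatorname{im}M(e_i)$ is too small to determine them), subtract that derivation to reduce to $\Delta$ vanishing on $e_1,e_2$, and then show $\Delta(e_j)=0$ degree by degree using test vectors $e_1+e_j$ and $e_2+e_j$.
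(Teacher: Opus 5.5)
\textbf{This is a strategy, not a proof, and the plan as stated would not close.} Neither $\dim\Der(A_8)=7$ nor $\LocDer(A_8)=\Der(A_8)$ is established. The derivation algebra is only ``expected'' to be $7$-dimensional, and several of the heuristics guiding you are wrong:
\begin{itemize}
\item Only $R_{e_1},R_{e_2},R_{e_3}$ are nonzero, since no product in \eqref{eq:min-A8} has $e_4,\dots,e_8$ on the right. So the inner derivations span only three dimensions.
\item The degree-zero part is not killed. It is two-dimensional (the parameters $a,b$ in \eqref{eq:min-der-family}) and contains the grading derivation.
\item Since $De_8=(5a-9b)e_8$, nothing forces $\Delta(e_8)=0$.
\item Locality does not mean that ``the same $7$ parameters reproduce $\Delta$ at several points.'' The witness depends on $x$, and that is the whole difficulty.
\end{itemize}

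\textbf{The local-to-global step would fail at two points.}

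First, you cannot subtract one derivation to make $\Delta$ vanish on both $e_1$ and $e_2$. Every derivation ties the $e_3$-coefficient of $De_1$ to the $e_4$-coefficient of $De_2$ (they are $p$ and $-p$). But for $x=x_1e_1+x_2e_2$ with $x_2(3x_1+x_2)\ne0$, the set $\{D(x):D\in\Der(A_8)\}$ is all of $\langle e_1,e_2,e_3,e_4,e_5,e_8\rangle$. So locality on that plane cannot detect the coupling. It only gives the paper's normalization $\Delta(e_1)=ce_3+de_8$, $\Delta(e_2)=-ce_3-de_8$, with $c$ still undetermined.

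Second, and decisively, the vectors $e_1+e_j$ and $e_2+e_j$ impose no condition at all on the $e_8$-coordinates of the columns of $\Delta$. The parameters $s,t$ occur only as $se_8$ in $De_1$ and $te_8$ in $De_2$. Hence the $e_8$-coordinate of $D(x)$ can be adjusted freely, without changing the other coordinates, whenever $(x_1,x_2)\ne(0,0)$. Concretely, consider the map sending $e_8\mapsto e_8$ and $e_1,\dots,e_7\mapsto0$. It is local at every vector with $(x_1,x_2)\ne(0,0)$ and at every two-term vector $\mu e_i+\lambda e_j$. Yet it is not a derivation, because $e_8=[e_7,e_2]$ while $e_7$ and $e_2$ are killed. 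So no degree-by-degree argument with your test vectors can finish.

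\textbf{What the paper does instead.} It uses two ideas absent from your plan.
\begin{itemize}
\item At a generic vector ($x_2\ne0$, $3x_1+x_2\ne0$), the vanishing of the first two coordinates of $\Delta(x)$ forces the witness to have $a_x=b_x=0$. The remaining coordinates are then linear in $p_x,q_x,r_x$. Eliminating these gives the polynomial identities $x_2y_6-x_4y_4=0$ and \eqref{eq:min-poly}, which hold identically by density. Their coefficients (for example, that of $x_1x_4^2$, which is $-c$) kill $c$ and every non-central entry.
\item The central entries are then killed at the vectors $e_3+e_4+e_i$, which lie inside $A_8^2$. There the $e_3,e_4$ coordinates force $a=b=0$, and then the $e_5,e_6,e_7$ coordinates force $p=q=r=0$.
\end{itemize}
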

\begin{proof}
We give a direct proof of the Leibniz identity, determine all derivations, and then show that
locality forces the same formulas.

\emph{The algebra and its lower central series.} Let $R_i(x)=[x,e_i]$. From~\eqref{eq:min-A8}, the only nonzero right
multiplications are $R_1,R_2,R_3$, whose nonzero values are
\[
\begin{array}{ll}
R_1& e_2\mapsto e_3,\ e_4\mapsto e_5,\ e_6\mapsto2e_7,\ e_7\mapsto3e_8\\
R_2& e_1\mapsto-e_3,\ e_2\mapsto e_4,\ e_4\mapsto e_6,\ e_5\mapsto e_7,\ e_6\mapsto e_7,\ e_7\mapsto e_8\\
R_3& e_2\mapsto e_5,\ e_4\mapsto e_7,\ e_5\mapsto3e_8,\ e_6\mapsto e_8.
\end{array}
\]
Here $[S,T]=ST-TS$ for linear operators. The displayed actions give
\[
[R_1,R_2]=R_3,\qquad[R_1,R_3]=[R_2,R_3]=0.
\]
For example, $[R_1,R_2]$ sends $e_2,e_4,e_5,e_6$ to $e_5,e_7,3e_8,e_8$, respectively, and kills the other basis
vectors. These relations, together with $R_i=0$ for $i\ge4$, give
\[
R_{[e_i,e_j]}=[R_j,R_i]\qquad(1\le i,j\le8).
\]

By bilinearity, $R_{[y,z]}=R_zR_y-R_yR_z$ for all $y,z$. Applied to $x$, this is exactly the right Leibniz
identity. The lower central series follows immediately from the multiplication table:
\[
\begin{gathered}
A_8^2=\langle e_3,e_4,e_5,e_6,e_7,e_8\rangle,\\
A_8^3=\langle e_5,e_6,e_7,e_8\rangle,\quad A_8^4=\langle e_7,e_8\rangle,\quad
A_8^5=\langle e_8\rangle,\quad A_8^6=0.
\end{gathered}
\]
The nonzero square $[e_2,e_2]=e_4$ shows that $A_8$ is non-abelian and is not a Lie algebra.

\emph{Determination of the derivations.} The full derivation family, with arbitrary $a,b,p,q,r,s,t\in\C$,
is
\begin{equation}\label{eq:min-der-family}
\begin{aligned}
De_1&=ae_1+pe_3+se_8,\\
De_2&=be_1+(a-3b)e_2+qe_3-pe_4+re_5+te_8,\\
De_3&=(2a-3b)e_3,\\
De_4&=(2a-6b)e_4+qe_5-pe_6+re_7,\\
De_5&=(3a-6b)e_5-pe_7+3re_8,\\
De_6&=be_5+(3a-9b)e_6+(2q-p)e_7+re_8,\\
De_7&=(4a-9b)e_7+(3q-p)e_8,\\
De_8&=(5a-9b)e_8.
\end{aligned}
\end{equation}
To prove completeness, let $D$ be a derivation and write
\[
De_1=\sum_{i=1}^8u_ie_i,\qquad De_2=\sum_{i=1}^8v_ie_i.
\]
The algebra is generated by $e_1,e_2$, since
\[
e_3=[e_2,e_1],\ e_4=[e_2,e_2],\ e_5=[e_2,e_3],\ e_6=[e_4,e_2],\ e_7=[e_5,e_2],\ e_8=[e_6,e_3].
\]
Thus the derivation rule determines $De_3,\ldots,De_8$ from $De_1,De_2$. Applying it to $[e_1,e_1]=0$ gives
\[
u_4e_5+2u_6e_7+3u_7e_8=0,
\]
so $u_4=u_6=u_7=0$. Applying it to $[e_1,e_3]=0$ then gives $u_2e_5+3u_5e_8=0$, hence $u_2=u_5=0$.
Before the remaining coefficients have been simplified, the generating products give
\[
\begin{aligned}
De_3&=(u_1+v_2)e_3+(u_3+v_4)e_5+2v_6e_7+3v_7e_8,\\
De_4&=2v_2e_4+v_3e_5+v_4e_6+(v_5+v_6)e_7+v_7e_8,\\
De_5&=(u_1+2v_2)e_5+v_4e_7+(3v_5+v_6)e_8,\\
De_6&=v_1e_5+3v_2e_6+(2v_3+v_4)e_7+(v_5+v_6)e_8,\\
De_7&=(u_1+3v_2)e_7+(3v_3+v_4)e_8,\\
De_8&=(u_1+3v_1+4v_2)e_8.
\end{aligned}
\]
The zero product $[e_3,e_1]=0$ gives $6v_6e_8=0$, so $v_6=0$. Next, $[e_3,e_2]=0$ gives $u_3+v_4=0$, and
$[e_1,e_2]=-e_3$ gives $v_7=0$. Finally, compare the derivation rule for $[e_6,e_2]=e_7$ with the displayed
expression for $De_7$. The $e_7$ coordinate gives
\[
u_1=3v_1+v_2.
\]
Setting $a=u_1$, $b=v_1$, $p=u_3$, $q=v_3$, $r=v_5$, $s=u_8$, $t=v_8$ yields exactly~\eqref{eq:min-der-family}.

Conversely, a linear map in~\eqref{eq:min-der-family} satisfies
\[
\begin{aligned}
{}[D,R_1]&=aR_1+pR_3,\\
[D,R_2]&=bR_1+(a-3b)R_2+qR_3,\\
[D,R_3]&=(2a-3b)R_3.
\end{aligned}
\]

These equalities follow by applying both sides to the basis using the three right multiplications
listed above. Their right sides are $R_{De_1},R_{De_2},R_{De_3}$, respectively. For $i\ge4$, both $[D,R_i]$ and $R_{De_i}$
are zero. Thus $[D,R_y]=R_{Dy}$ for every $y$, which is equivalent to the derivation rule. This proves
the converse. The seven parameters are independent, so $\dim\Der(A_8)=7$.

\emph{Normalization of a local derivation.} Let $T$ be a local derivation. Choose $D_0\in\Der(A_8)$
with
\[
T(e_1+e_2)=D_0(e_1+e_2)
\]
and put $\Delta=T-D_0$. Subtracting a derivation preserves locality: a witness $D_x$ for $T$ is replaced
by $D_x-D_0$. Hence $\Delta$ is local and $\Delta(e_1+e_2)=0$. Locality at $e_1$ gives
\[
\Delta(e_1)=a_0e_1+ce_3+de_8,\qquad\Delta(e_2)=-a_0e_1-ce_3-de_8.
\]
For any derivation in~\eqref{eq:min-der-family}, the first two coordinates of $D(e_1-3e_2)$ are $(a-3b,-3(a-3b))$. The
first two coordinates of $\Delta(e_1-3e_2)=4\Delta(e_1)$ are $(4a_0,0)$. A local witness therefore forces $a_0=0$.
Consequently
\begin{equation}\label{eq:min-normal-gen}
\Delta(e_1)=ce_3+de_8,\qquad\Delta(e_2)=-ce_3-de_8.
\end{equation}
Locality at the remaining basis vectors gives scalars such that
\begin{equation}\label{eq:min-normal-cols}
\begin{aligned}
\Delta(e_3)&=\ell e_3,\\
\Delta(e_4)&=t_0e_4+ue_5+ve_6+we_7,\\
\Delta(e_5)&=me_5+ne_7+he_8,\\
\Delta(e_6)&=re_5+s_0e_6+je_7+ke_8,\\
\Delta(e_7)&=fe_7+ge_8,\qquad\Delta(e_8)=ze_8.
\end{aligned}
\end{equation}
In particular, the first two coordinates of $\Delta(x)$ vanish for every $x$.

\emph{Relations from a local witness at a general vector.} Fix $x=\sum_{i=1}^8x_ie_i$ with
\begin{equation}\label{eq:min-open}
x_2\ne0,\qquad3x_1+x_2\ne0,
\end{equation}
and choose a derivation $D_x$ satisfying $\Delta(x)=D_x(x)$. Denote its parameters in~\eqref{eq:min-der-family} by
$a_x,b_x,p_x,q_x,r_x,s_x,t_x$. For every derivation of that form,
\[
(D_xx)_2=(a_x-3b_x)x_2,\qquad3(D_xx)_1+(D_xx)_2=a_x(3x_1+x_2).
\]
Since the left sides vanish, \eqref{eq:min-open} implies $a_x=b_x=0$. Writing $y_i$ for the coordinates of $\Delta(x)$, the
remaining formulas give
\begin{equation}\label{eq:min-witness}
\begin{aligned}
y_3&=p_xx_1+q_xx_2,&y_4&=-p_xx_2,\\
y_5&=r_xx_2+q_xx_4,&y_6&=-p_xx_4,\\
y_7&=r_xx_4-p_xx_5+(2q_x-p_x)x_6.
\end{aligned}
\end{equation}
In particular, $x_2y_6-x_4y_4=0$. Substituting~\eqref{eq:min-normal-cols}, we obtain
\[
vx_2x_4+s_0x_2x_6-t_0x_4^2=0.
\]
This polynomial identity holds whenever~\eqref{eq:min-open} holds, and hence holds identically. Indeed the
excluded set is the union of two hyperplanes, so its complement is dense, and a polynomial is
continuous. Comparing the coefficients of the three distinct monomials gives
\[
v=s_0=t_0=0.
\]
Thus $y_4=y_6=0$, and the equation $y_4=-p_xx_2$ gives $p_x=0$ at every vector satisfying~\eqref{eq:min-open}. The
remaining equations in~\eqref{eq:min-witness} reduce to
\[
y_3=q_xx_2,\qquad y_5=r_xx_2+q_xx_4,\qquad y_7=r_xx_4+2q_xx_6.
\]

Eliminating $q_x,r_x$ gives the explicit relation
\begin{equation}\label{eq:min-poly}
(2x_2x_6-x_4^2)y_3+x_2x_4y_5-x_2^2y_7=0.
\end{equation}
By~\eqref{eq:min-normal-gen} and~\eqref{eq:min-normal-cols}, this is
\[
\begin{aligned}
0={}&(2x_2x_6-x_4^2)\bigl(c(x_1-x_2)+\ell x_3\bigr)\\
&+x_2x_4(ux_4+mx_5+rx_6)-x_2^2(wx_4+nx_5+jx_6+fx_7).
\end{aligned}
\]
Again it holds identically as a polynomial. The coefficients of $x_1x_4^2$ and $x_3x_4^2$ are $-c$ and $-\ell$, so
$c=\ell=0$. The identity then becomes
\[
ux_2x_4^2+mx_2x_4x_5+rx_2x_4x_6-wx_2^2x_4-nx_2^2x_5-jx_2^2x_6-fx_2^2x_7=0.
\]
Its seven monomials are distinct. Their coefficients must vanish, giving
\[
u=m=r=w=n=j=f=0.
\]
Consequently
\[
\Delta(e_3)=\Delta(e_4)=0,\quad\Delta(e_5)=he_8,\quad\Delta(e_6)=ke_8,\quad
\Delta(e_7)=ge_8,\quad\Delta(e_8)=ze_8.
\]
\emph{The central coefficients.} For each $i\in\{5,6,7,8\}$, put $w_i=e_3+e_4+e_i$. Then $\Delta(w_i)$ is a multiple
of $e_8$. Let $D_i$ be a local witnessing derivation with $D_i(w_i)=\Delta(w_i)$, and denote its parameters by
$a_i,b_i,p_i,q_i,r_i,s_i,t_i$. The $e_3,e_4$ coordinates give
\[
2a_i-3b_i=0,\qquad2a_i-6b_i=0,
\]
hence $a_i=b_i=0$. The $e_5$ coordinate of $D_i(w_i)$ is now $q_i$, so $q_i=0$. Its $e_6$ coordinate is $-p_i$,
so $p_i=0$. Its $e_7$ coordinate is then $r_i$, so $r_i=0$. Since $w_i$ has zero $e_1,e_2$ coordinates, the only
parameters still available, $s_i,t_i$, contribute nothing to $D_i(w_i)$. Hence $D_i(w_i)=0$ for each $i$, and
therefore
\[
h=k=g=z=0.
\]
We have proved that $\Delta$ sends $e_1$ to $de_8$, $e_2$ to $-de_8$, and kills $e_3,\ldots,e_8$. This is the derivation in
\eqref{eq:min-der-family} with $a=b=p=q=r=0$, $s=d$ and $t=-d$. Thus $T=D_0+\Delta$ is a derivation. Conversely
every derivation is local, proving $\LocDer(A_8)=\Der(A_8)$.
\end{proof}
\begin{theorem}\label{thm:min-eight}
The least dimension of a non-abelian nilpotent complex right Leibniz algebra $A$
satisfying $\LocDer(A)=\Der(A)$ is eight. The least possible nilpotency class of such an algebra is
five.
\end{theorem}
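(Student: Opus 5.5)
The theorem combines the existence results already established with the example $A_8$, so the plan is to assemble them rather than prove anything new. The argument has two halves: a lower bound, showing that no smaller dimension or class works, and attainment, showing that dimension eight and class five are realised.

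For the lower bound on the dimension, let $A$ be any non-abelian nilpotent complex right Leibniz algebra with $\LocDer(A)=\Der(A)$. If $\dim A\le 7$, then Theorem~\ref{thm:min-small} provides a proper local derivation, which contradicts the assumed equality. Hence $\dim A\ge 8$.

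For the lower bound on the class, if the nilpotency class of $A$ is at most four, then Proposition~\ref{prop:min-smallclass} again provides a proper local derivation. Hence the class is at least five. Non-abelianity is used in both cases, because every abelian algebra satisfies the equality trivially. This is why the statement restricts to non-abelian algebras.

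For attainment, Proposition~\ref{prop:min-A8} shows that $A_8$ is a non-abelian nilpotent right Leibniz algebra of dimension eight and class five with $\LocDer(A_8)=\Der(A_8)$. So both minima are realised, and by the same algebra. The only point needing care is the precise reading of ``least possible'': the minimum is taken over the nonempty class of non-abelian nilpotent algebras satisfying the equality, so the lower bound together with this single example gives both values. All the substantive work already lies in Theorem~\ref{thm:min-small}, Proposition~\ref{prop:min-smallclass} and Proposition~\ref{prop:min-A8}.
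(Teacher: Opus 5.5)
Your proposal is correct and follows the paper's proof: Theorem~\ref{thm:min-small} excludes dimensions at most seven, Proposition~\ref{prop:min-smallclass} excludes classes at most four in every dimension, and Proposition~\ref{prop:min-A8} shows that $A_8$ attains dimension eight and class five at once.
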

\begin{proof}
Theorem~\ref{thm:min-small} excludes dimensions at most seven, while Proposition~\ref{prop:min-A8} supplies an example
in dimension eight and class five. Proposition~\ref{prop:min-smallclass} excludes classes at most four in every dimension.
\end{proof}

\section{Local automorphisms of nilpotent Leibniz algebras}

Throughout this section, $A$ denotes a finite-dimensional complex
right Leibniz algebra. We use the notation of
Section~\ref{sec:leibniz-local-maps}.
We prove that every non-abelian nilpotent algebra $A$ admits
a proper local automorphism whenever $\dim A\le7$ or its
nilpotency class is at most four. We then determine the full
automorphism group of the eight-dimensional algebra $A_8$
of class five defined in~\eqref{eq:min-A8} and establish
$\LocAut(A_8)=\Aut(A_8)$. Consequently, eight and five are
the least possible dimension and nilpotency class, respectively,
for this equality among non-abelian nilpotent complex
right Leibniz algebras.

\subsection{A construction of proper local automorphisms}
The projection argument for local
derivations needs an additional verification in the automorphism setting: the maps witnessing
locality must preserve products and be invertible. The following lemma supplies both properties.
\begin{lemma}\label{lem:aut-projection}
Suppose $A$ admits a derivation $D$ satisfying
\begin{equation}\label{eq:aut-strong}
D(A)\subseteq A^2,\qquad0\ne D(A^2)\subseteq Z(A),\qquad D(Z(A))=0.
\end{equation}
Let $A=V\oplus A^2$, and let $\pi$ be the projection onto $A^2$. For every $\lambda\in\C\setminus\{0\}$, the map
\[
T=\Id+\lambda D\pi
\]
is a proper local automorphism. If $A$ is non-abelian of class two, then $T=\Id+3\pi$ is a proper local
automorphism.
\end{lemma}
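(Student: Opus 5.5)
We follow the proof of Lemma~\ref{lem:class-three-plus-local-aut}, replacing the Lie-specific inputs by the hypotheses \eqref{eq:aut-strong}.

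\emph{Step 1: $T$ is bijective.} Put $\Delta=\lambda D\pi$. Since $D(A^2)\subseteq Z(A)\cap A^2$, we have $\Delta(A)\subseteq Z(A)\cap A^2$. Moreover $\pi$ fixes $A^2$ and $D(Z(A))=0$, so
\[
\Delta^2=\lambda^2 D\pi D\pi=\lambda^2 D^2\pi=0 .
\]
Hence $T=\Id+\Delta$ is invertible, with inverse $\Id-\Delta$.

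\emph{Step 2: $T$ is not an automorphism.} Choose a bracket $[a,b]$ with $D([a,b])\ne0$. Centrality of $\Delta(A)$ gives $[Ta,Tb]=[a,b]$. On the other hand,
\[
T[a,b]=[a,b]+\lambda D[a,b]\ne[a,b].
\]

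\emph{Step 3: locality.} Fix $x$ and split into two cases.
\begin{itemize}
\item If $x\notin A^2$, take $f$ with $f(A^2)=0$ and $f(x)=1$, and set $E_x(z)=f(z)\Delta(x)$. Since $\Delta(x)\in A^2\cap Z(A)$, we get $f(\Delta(x))=0$. Lemma~\ref{lem:central-maps} then makes $\Id+E_x$ an automorphism, and it sends $x$ to $T(x)$.
\item If $x\in A^2$, the natural witness is $\Id+\lambda D$, which agrees with $T$ on $A^2$. Its invertibility needs nilpotence of $D$. From $D(A)\subseteq A^2$ we get $D^2(A)\subseteq D(A^2)\subseteq Z(A)$, and then $D^3=0$ because $D(Z(A))=0$. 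So $\Id+\lambda D$ is invertible.
\end{itemize}

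\emph{Main obstacle: $\Id+\lambda D$ preserves the bracket.} The Lie version, Lemma~\ref{lem:nilpotent-derivation-automorphism}, needs $[D(a),D(b)]=0$. We expand
\[
[(\Id+\lambda D)a,(\Id+\lambda D)b]=[a,b]+\lambda D[a,b]+\lambda^2[Da,Db].
\]
Here $Da,Db\in A^2$, so we must show $[A^2\cap D(A),\,D(A)]=0$. Apply the derivation rule to $[Da,b]$:
\[
D[Da,b]=[D^2a,b]+[Da,Db].
\]
The left side lies in $D(A^2)\subseteq Z(A)$, and $[D^2a,b]=0$ since $D^2a$ is central. So $[Da,Db]=D([Da,b])$, which is central but not obviously zero. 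The first attempt will be to replace $\Id+\lambda D$ by the corrected witness $\exp(\lambda D)=\Id+\lambda D+\tfrac{\lambda^2}{2}D^2$. This is an automorphism because $D$ is a nilpotent derivation over $\C$ (Leibniz rule for exponentials). Since $D^2$ vanishes on $A^2$ (it maps $A^2$ into $D(Z(A))=0$), it agrees with $T$ at $x\in A^2$. This resolves the obstacle.

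\emph{Class-two case.} Here $A^2\subseteq Z(A)$, and $T=\Id+3\pi$ has inverse $\Id-\tfrac34\pi$. It fails to be an automorphism, since $T[a,b]=4[a,b]\ne[a,b]=[Ta,Tb]$. For locality:
\begin{itemize}
\item At $x\in A^2$, use the automorphism acting by $2$ on $V$ and by $4$ on $A^2$. It preserves brackets because products land in $A^2$ and scale by $4$.
\item At $x\notin A^2$, use $\Id+E_x$ with $E_x(z)=f(z)\cdot3\pi(x)$, exactly as in Lemma~\ref{lem:class-two-local-aut}.
\end{itemize}
The choice $3$ avoids the $\sqrt2$ used there.
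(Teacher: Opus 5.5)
Your proof is correct and has the same structure as the paper's: the square-zero map $\lambda D\pi$ gives invertibility, central shears $\Id+E_x$ handle $x\notin A^2$, and in class two you scale by $2$ on $V$ and $4$ on $A^2$. The one deviation is the witness on $A^2$, where you use $\exp(\lambda D)=\Id+\lambda D+\tfrac{\lambda^2}{2}D^2$ (the alternative the paper itself mentions), whereas the paper keeps $\Id+\lambda D$ because the term you called ``not obviously zero'' does vanish: $0=D^2[a,b]=[D^2a,b]+2[Da,Db]+[a,D^2b]=2[Da,Db]$, using $D^2(A^2)\subseteq D(Z(A))=0$ and $D^2(A)\subseteq Z(A)$.
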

\begin{proof}
Assume first~\eqref{eq:aut-strong}. A derivation preserves $A^2$, since it sends every bracket to a sum of
brackets. Consequently
\[
D^2(A)\subseteq D(A^2)\subseteq Z(A)\cap A^2,\qquad D^3=0,\qquad D^2(A^2)=0.
\]
Set $N=D\pi$. Its image lies in $Z(A)\cap A^2$, on which $D$ vanishes and $\pi$ acts as the identity. Thus
\[
N^2(x)=D\pi(D\pi x)=D^2\pi x=0,
\]
and $T^{-1}=\Id-\lambda N$.

We record explicitly that these hypotheses also imply $[D(A),D(A)]=0$. Applying the derivation
identity twice gives
\[
D^2[u,v]=[D^2u,v]+2[Du,Dv]+[u,D^2v].
\]
The left side vanishes because $[u,v]\in A^2$, and the first and last terms vanish because $D^2(A)\subseteq Z(A)$.
Since the characteristic is zero, $[Du,Dv]=0$. It follows that
\[
\psi=\Id+\lambda D
\]
preserves every bracket:
\[
[\psi u,\psi v]=[u,v]+\lambda D[u,v]+\lambda^2[Du,Dv]=\psi[u,v].
\]
Moreover $(\Id+\lambda D)^{-1}=\Id-\lambda D+\lambda^2D^2$, because $D^3=0$. Hence $\psi$ is an automorphism. If $x\in A^2$,
then $\pi x=x$ and $\psi(x)=T(x)$.

If $x\notin A^2$, put $z=\lambda D\pi x\in Z(A)\cap A^2$. Choose $f\in A^*$ with $f(A^2)=0$ and $f(x)=1$. Define
\[
\sigma_x(y)=y+f(y)z.
\]
Centrality of $z$ and $f(A^2)=0$ give
\[
[\sigma_xu,\sigma_xv]=[u,v]=\sigma_x[u,v].
\]
Since $f(z)=0$, its inverse is $y\mapsto y-f(y)z$. Thus $\sigma_x$ is an automorphism and $\sigma_x(x)=x+z=T(x)$.
This proves locality at every vector.

Finally, $N(A)\subseteq Z(A)$ implies $[Tu,Tv]=[u,v]$, whereas
\[
T[u,v]=[u,v]+\lambda D[u,v].
\]
The space $A^2$ is spanned by brackets and $D(A^2)\ne0$, so some pair $u,v$ has $D[u,v]\ne0$. For this
pair the two expressions differ. Therefore $T$ is proper.

Now suppose $A$ has class two. The filtration inclusion~\eqref{eq:min-filtration} gives both $[A^2,A]=0$ and $[A,A^2]=0$,
so $A^2\subseteq Z(A)$. The map $T=\Id+3\pi$ is invertible, with inverse $\Id-\tfrac34\pi$. The map $\psi=2\Id+2\pi$
multiplies $V$ by two and $A^2$ by four; centrality of $A^2$ gives
\[
[\psi u,\psi v]=4[u,v]=\psi[u,v].
\]
It is invertible and agrees with $T$ on $A^2$. At $x\notin A^2$ use the preceding shear construction with
$z=3\pi x$. Thus $T$ is local. For any nonzero bracket,
\[
T[u,v]=4[u,v]\ne[u,v]=[Tu,Tv],
\]
which proves properness in this case as well.
\end{proof}
\begin{remark}
Under~\eqref{eq:aut-strong}, one may alternatively use $\exp(\lambda D)=\Id+\lambda D+\tfrac{\lambda^2}{2}D^2$ on $A^2$. The
preceding proof shows that the simpler witness $\Id+\lambda D$ already works: the vanishing of $[D(A),D(A)]$
follows from the stated hypotheses, rather than being an additional assumption.
\end{remark}
\begin{theorem}\label{thm:aut-small}
Every non-abelian nilpotent complex right Leibniz algebra of dimension at most
seven admits a proper local automorphism. The same conclusion holds in every finite dimension if
the class is at most four, and in class five unless $[A^2,A^3]=0\ne[A^3,A^2]$.
\end{theorem}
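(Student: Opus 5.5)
The plan is to reuse the case analysis of Section~\ref{sec:leibniz-local-maps} and to check, in each case, that the derivation produced there satisfies the stronger hypothesis \eqref{eq:aut-strong} of Lemma~\ref{lem:aut-projection}. Class two is covered directly by the second part of Lemma~\ref{lem:aut-projection}. In every other case the derivation used for the local-derivation statement is either an inner derivation $R_w$ or a word of right multiplications, and the argument will turn on this form.

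The first step is a general observation. Let $D$ be a nonempty word in right multiplications, or a single $R_w$, that is a derivation. Each $R_y$ maps $A$ into $A^2$, so $D(A)\subseteq A^2$. Each $R_y$ also kills $Z(A)$, because $[z,y]=0$ for central $z$; hence $D(Z(A))=0$. So whenever such a $D$ satisfies $0\ne D(A^2)\subseteq Z(A)$, all three conditions of \eqref{eq:aut-strong} hold, and Lemma~\ref{lem:aut-projection} yields a proper local automorphism.

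The second step runs through the cases and checks that every derivation invoked is of this form.
\begin{enumerate}
\item In Lemma~\ref{lem:min-ann}, class $c\ge3$ uses a word of length $c-2$. Class two there is handled by Lemma~\ref{lem:aut-projection} directly.
\item Lemma~\ref{lem:min-split} uses words of length $m$.
\item Proposition~\ref{prop:min-smallclass} uses either $R_y$, $R_w$, or Lemma~\ref{lem:min-split}.
\item Lemma~\ref{lem:min-six} uses $R_w$ or Lemma~\ref{lem:min-split}.
\end{enumerate}
With this check, the proof of Proposition~\ref{prop:min-smallclass} gives the class $\le4$ statement and the class-five statement outside \eqref{eq:min-Y}. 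The proof of Theorem~\ref{thm:min-small} gives the dimension $\le7$ statement: it excludes \eqref{eq:min-Y} in the relevant dimension vectors via Lemmas~\ref{lem:min-T}, \ref{lem:min-V} and~\ref{lem:min-d3}, and otherwise falls back on Lemma~\ref{lem:min-ann} or Lemma~\ref{lem:min-six}. Those exclusion lemmas are purely structural statements about $A$, so they transfer to the automorphism setting unchanged.

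The main obstacle is the bookkeeping in this audit. Lemma~\ref{lem:min-projection} only required $0\ne D(A^2)\subseteq Z(A)$, so one must confirm that every case genuinely produces a word in right multiplications rather than some other derivation. The one exception is the class-two construction $\Id$ on $V$, $2\Id$ on $A^2$ in Lemma~\ref{lem:min-ann}, which has $D(A)\not\subseteq A^2$. It must be rerouted to the class-two clause of Lemma~\ref{lem:aut-projection}, which is legitimate because $[A,A^2]=0$ together with class two gives $A^2\subseteq Z(A)$.
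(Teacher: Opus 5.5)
Your proposal is correct and follows essentially the same route as the paper. You observe that any nonempty word $R_{y_r}\cdots R_{y_1}$ that is a derivation with $0\ne D(A^2)\subseteq Z(A)$ automatically satisfies \eqref{eq:aut-strong}, you send class two to the second clause of Lemma~\ref{lem:aut-projection}, and you rerun the case analysis of Proposition~\ref{prop:min-smallclass} and Theorem~\ref{thm:min-small}. You also correctly identify the $\Id\oplus2\Id$ derivation of Lemma~\ref{lem:min-ann} as the only one that fails $D(A)\subseteq A^2$, which is exactly why the paper handles class two separately.
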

\begin{proof}
We explain why the structural constructions of Section~5 give the stronger hypotheses~\eqref{eq:aut-strong},
rather than merely the existence of a proper local derivation. If a nonempty word
\[
D=R_{y_r}\cdots R_{y_1},\qquad r\ge1,
\]
is a derivation and satisfies $0\ne D(A^2)\subseteq Z(A)$, then
\[
D(A)\subseteq A^{r+1}\subseteq A^2,\qquad D(Z(A))=0.
\]
The second equality holds because the rightmost factor $R_{y_1}$, which acts first, kills the two-sided centre. Thus every such
word satisfies~\eqref{eq:aut-strong}.

For completeness, we list the constructions used in the dimension argument. Class two is covered
directly by Lemma~\ref{lem:aut-projection}. In class three, choose $y$ with $[A^2,y]\ne0$ and put $D=R_y$. In class
four, either choose $w\in A^2$ with $[A^2,w]\ne0$ and put $D=R_w$, or, when $[A^2,A^2]=0$, use the
nonzero word of length two from Lemma~\ref{lem:min-split}. In class five, if $[A^2,A^3]\ne0$, take $D=R_w$ for
an appropriate $w\in A^3$; if both $[A^2,A^3]$ and $[A^3,A^2]$ vanish, use a word of length three from that
lemma. Every displayed derivation is nonzero on $A^2$ and sends it into the last nonzero lower central
term. Lemma~\ref{lem:aut-projection} proves all assertions concerning the nilpotency class.

If $[A,A^2]=0$ and the class $c\ge3$, Lemma~\ref{lem:min-ann} constructs a word of length $c-2$ with $0\ne D(A^2)\subseteq
A^c\subseteq Z(A)$. Hence this case too satisfies~\eqref{eq:aut-strong}. In particular, it includes $d_1=1$, and $d_2=1$ when
a square lies outside $A^3$.

Assume now $\dim A\le7$ and that these cases have not applied.  As in the proof of Theorem~\ref{thm:min-small},
only classes five and six remain, with $d_1\ge2$. In class five the possible dimension vectors are
$(2,1,1,1,1),\ (3,1,1,1,1),\ (2,2,1,1,1),
(2,1,2,1,1),\ (2,1,1,2,1),\ (2,1,1,1,2).$

For the first, second, third and sixth vectors, Lemma~\ref{lem:min-T} excludes the exceptional case $[A^2,A^3]=0\ne[A^3,A^2]$.
For $(2,1,2,1,1)$, squares outside $A^3$ have already been treated; otherwise Lemma~\ref{lem:min-V}
excludes the exceptional case. For $(2,1,1,2,1)$, the same alternative on squares applies; if every
square is in $A^3$, Lemma~\ref{lem:min-d3} gives $d_4\le1$, a contradiction. Thus each class-five case has one of the
required derivations.

In class six the only remaining vector is $(2,1,1,1,1,1)$. Squares outside $A^3$ again have already
been treated. Otherwise the proof of Lemma~\ref{lem:min-six} gives either $D=R_w$ for $w\in A^4$ or a word of four
right multiplications, with $0\ne D(A^2)\subseteq A^6\subseteq Z(A)$. Both satisfy~\eqref{eq:aut-strong}. No class greater than six
is possible, because $d_1\ge2$ implies $\dim A\ge c+1$. This completes the proof of the lower dimension bound.
\end{proof}
\subsection{The full automorphism group of the eight-dimensional algebra}
We now work with
$A_8$ from~\eqref{eq:min-A8}. Computing its automorphisms explicitly will allow us to prove rigidity of local automorphisms
without drawing any inference from the equality of local derivations and derivations. To keep the
formulas readable, write the seven parameters as $a,b,\alpha,\beta,\gamma,\mu,\nu$, with $ab\ne0$.
\begin{proposition}\label{prop:aut-eight}
The automorphisms of $A_8$ are precisely the maps $\varphi$ given by
\begin{equation}\label{eq:aut-eight-family}
\begin{aligned}
\varphi(e_1)&=ae_1+a\alpha e_3+\mu e_8,\\
\varphi(e_2)&=\frac{a-b}{3}e_1+be_2+\beta e_3-\alpha be_4+\gamma e_5+\frac{\alpha^2b}{2}e_6
+\left(-\alpha\gamma+\frac{\alpha^3b}{6}\right)e_7+\nu e_8,\\
\varphi(e_3)&=abe_3,\\
\varphi(e_4)&=b^2e_4+b\left[\beta+\frac{\alpha(b-a)}{3}\right]e_5-\alpha b^2e_6+\left(\frac{a\alpha^2b}{3}-\alpha b\beta+b\gamma+\frac{\alpha^2b^2}{6}\right)e_7\\
&\quad+\left(-\frac{a\alpha^3b}{6}-a\alpha\gamma+\frac{\alpha^2b\beta}{2}+3\beta\gamma\right)e_8,\\
\varphi(e_5)&=ab^2e_5-\alpha ab^2e_7+\left(3ab\gamma+\frac{ab^2\alpha^2}{2}\right)e_8,\\
\varphi(e_6)&=\frac{ab^2-b^3}{3}e_5+b^3e_6+(2b^2\beta-\alpha ab^2)e_7\\
&\quad+\left(-2ab\alpha\beta+ab\gamma+\frac{ab^2\alpha^2}{6}+\frac{a^2b\alpha^2}{3}+3b\beta^2\right)e_8,\\
\varphi(e_7)&=ab^3e_7+(3ab^2\beta-\alpha a^2b^2)e_8,\\
\varphi(e_8)&=a^2b^3e_8.
\end{aligned}
\end{equation}
The parameters are unique, and $\det\varphi=a^6b^{15}$.
\end{proposition}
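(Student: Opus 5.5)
An automorphism $\varphi$ of $A_8$ is determined by $\varphi(e_1)$ and $\varphi(e_2)$, because $A_8$ is generated by $e_1,e_2$ via
\[
e_3=[e_2,e_1],\ e_4=[e_2,e_2],\ e_5=[e_2,e_3],\ e_6=[e_4,e_2],\ e_7=[e_5,e_2],\ e_8=[e_6,e_3].
\]
So I would mirror the derivation computation in Proposition~\ref{prop:min-A8}. Write $\varphi(e_1)=\sum u_ie_i$ and $\varphi(e_2)=\sum v_ie_i$. Then define $\varphi(e_3),\dots,\varphi(e_8)$ through these generating products, and impose the remaining relations of~\eqref{eq:min-A8} together with the vanishing products.

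\emph{Constraints on $\varphi(e_1)$.} These come from the relations $[e_1,e_1]=0$ and $[e_1,e_3]=0$, exactly as for derivations. The element $[\varphi e_1,\varphi e_1]$ is computed from the right-multiplication table and forces the coefficients of $e_2,e_4,e_5,e_6,e_7$ in $\varphi(e_1)$ to vanish modulo lower-order corrections. Since $\varphi$ is only quadratic rather than linear in these conditions, I would argue degree by degree along the lower central filtration. The first step is to read off from the $e_4$ coordinate of $[\varphi e_1,\varphi e_1]$ that $u_2^2=0$, so $u_2=0$. Then $[\varphi e_1,\varphi e_1]$ reduces to $u_4e_5+2u_6e_7+3u_7e_8+\dots$, giving $u_4=u_6=u_7=0$. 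After that, $[\varphi e_1,\varphi e_3]=0$ kills $u_5$.

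\emph{Constraints on $\varphi(e_2)$.} Set $a=u_1$, $b=v_2$; invertibility modulo $A_8^2$ forces $ab\ne0$. The relation $[e_1,e_2]=-[e_2,e_1]$ together with the zero products $[e_3,e_1]=[e_3,e_2]=0$ then determines $v_4,v_6,v_7$ in terms of $\alpha=u_3/a$ and the other parameters. Here $v_4=-\alpha b$ appears first, followed by $v_6=\alpha^2b/2$ and the cubic $v_7$ term. Finally, comparing $\varphi([e_6,e_2])=\varphi(e_7)$ in the $e_7$ coordinate gives the linear relation $v_1=(a-b)/3$, the group analogue of $u_1=3v_1+v_2$. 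The free parameters $\beta=v_3$, $\gamma=v_5$, $\mu=u_8$, $\nu=v_8$ remain, and the formulas for $\varphi(e_3),\dots,\varphi(e_8)$ follow by substitution.

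\emph{Converse and uniqueness.} To show that every map in~\eqref{eq:aut-eight-family} is an automorphism, I would use the operator criterion $\varphi R_y\varphi^{-1}=R_{\varphi y}$, i.e.\ $\varphi R_y=R_{\varphi y}\varphi$ for $y=e_1,e_2,e_3$ (both sides vanish for $y=e_i$, $i\ge4$, since $\varphi(e_i)\in\langle e_5,\dots\rangle$ for $i\ge 4$ and $R_5=\dots=0$). This is equivalent to bracket preservation. A slicker alternative is to factor $\varphi$ as a diagonal automorphism $\mathrm{diag}$ with weights $(a,b)$ composed with $\exp$ of nilpotent derivations from~\eqref{eq:min-der-family}. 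The $\alpha^2/2$ and $\alpha^3/6$ coefficients suggest that the $p$-direction enters through $\exp(pD_p)$, which makes the automorphism property automatic.

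Uniqueness of the parameters is read off from the coordinates $u_1,u_3,u_8,v_2,v_3,v_5,v_8$. The matrix is triangular with respect to the ordering $e_1,e_2,\dots,e_8$ once the $e_1$-component of $\varphi(e_2)$ is accounted for: the $e_1,e_2$ block is upper triangular. Hence $\det\varphi$ is the product of the diagonal entries $a\cdot b\cdot ab\cdot b^2\cdot ab^2\cdot b^3\cdot ab^3\cdot a^2b^3=a^6b^{15}$.

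The main obstacle is the bookkeeping of the nonlinear higher-degree coordinates, namely the $e_7$ and $e_8$ entries of $\varphi(e_4)$ and $\varphi(e_6)$. I would try the exponential factorization first to avoid solving for them directly, and verify the resulting formulas only against the generating products.
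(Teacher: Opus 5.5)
Your plan is the paper's proof. In both, $\varphi$ is fixed by $u=\varphi(e_1)$ and $v=\varphi(e_2)$, and the relations $[e_1,e_1]=0$, $[e_1,e_2]+[e_2,e_1]=0$ and $[e_6,e_2]=[e_5,e_2]$ cut out the seven-parameter family; the last relation has $e_7$-coordinate $v_2^3(u_1-3v_1-v_2)$. The converse is a finite check, and the determinant is read off from the block-triangular shape.

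Two of your local variations are slightly cleaner than the paper. Once $u_2=u_6=0$, the $e_8$-coordinate of $[u,[v,u]]$ is $3u_1v_2u_5$. This kills $u_5$ directly, whereas the paper goes through $[\varphi(e_3),u]=0$ and the $e_5,e_7$-coordinates of $[u,v]+[v,u]=0$. Also, checking $\varphi R_i=R_{\varphi(e_i)}\varphi$ for $i=1,2,3$ replaces the paper's check of all $64$ products, mirroring how the paper treats derivations.

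Several details need repair.
\begin{enumerate}
\item After $u_2=u_6=0$, the $e_8$-coordinate of $[u,u]$ is $3(u_1u_7+u_3u_5)$, not $3u_7$. So $u_7=0$ only follows after $u_5=0$. Your $[e_1,e_3]$ computation does not involve $u_7$, so reordering the steps fixes this.
\item In the converse, $\varphi(e_4)$ contains $b^2e_4$, so $\varphi(e_i)\notin\langle e_5,\dots,e_8\rangle$ for $i=4$. The correct reason both sides vanish for $i\ge4$ is $R_4=\dots=R_8=0$ together with $\varphi(e_i)\in\langle e_4,\dots,e_8\rangle$.
\item For the determinant, the $(e_5,e_6)$ block also carries the off-diagonal entry $\frac{ab^2-b^3}{3}$. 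That block is still triangular, so $a^6b^{15}$ stands.
\item For the exponential route, no automorphism with $a\neq b$ is diagonal in this basis, since the $e_1$-coefficient of $\varphi(e_2)$ is $\frac{a-b}{3}$. The torus factor has to be the member of the family with $\alpha=\beta=\gamma=\mu=\nu=0$, and that factor needs its own check.
\end{enumerate}

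Finally, carrying out your computation exposes a sign error in the statement. The $e_8$-coordinate of $[u,v]+[v,u]=0$ gives $3av_7=-a\alpha(3\gamma+\frac{\alpha^2b}{2})$, so $v_7=-\alpha\gamma-\frac{\alpha^3b}{6}$. Consistently, let $D_p$ be the derivation in \eqref{eq:min-der-family} with $p=1$ and all other parameters $0$; then $\exp(\alpha D_p)(e_2)$ has $e_7$-coefficient $-\frac{\alpha^3}{6}$. The $+\frac{\alpha^3b}{6}$ in the displayed $\varphi(e_2)$ is therefore wrong. For example, with $a=b=\alpha=1$ and the remaining parameters $0$, the displayed map gives $[\varphi(e_2),\varphi(e_1)]=e_3+e_8\neq\varphi(e_3)$. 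The $e_8$-entry of $\varphi(e_4)$ in the statement is already computed with the minus sign. Your converse verification will therefore close only after that one sign in $\varphi(e_2)$ is corrected.
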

\begin{proof}
For clarity, the entire multiplication table can be written as one coordinate formula. If
$x=\sum_{i=1}^8x_ie_i$ and $y=\sum_{i=1}^8y_ie_i$, then
\begin{equation}\label{eq:aut-coordinate}
\begin{aligned}
{}[x,y]={}&(x_2y_1-x_1y_2)e_3+x_2y_2e_4+(x_2y_3+x_4y_1)e_5+x_4y_2e_6\\
&+(x_4y_3+x_5y_2+2x_6y_1+x_6y_2)e_7\\
&+(3x_5y_3+x_6y_3+3x_7y_1+x_7y_2)e_8.
\end{aligned}
\end{equation}
Let $u=\varphi(e_1)=\sum_{i=1}^8u_ie_i$ and $v=\varphi(e_2)=\sum_{i=1}^8v_ie_i$. The algebra is generated by $e_1,e_2$, as seen from
\[
e_3=[e_2,e_1],\ e_4=[e_2,e_2],\ e_5=[e_2,e_3],\ e_6=[e_4,e_2],\ e_7=[e_5,e_2],\ e_8=[e_6,e_3].
\]
Thus $u,v$ determine $\varphi$ uniquely.

The $e_4$ coordinate of $[u,u]=0$ gives $u_2^2=0$, so $u_2=0$. Since an automorphism induces an
invertible map on $A_8/A_8^2$, we have $u_1v_2\ne0$. The $e_5,e_7,e_8$ coordinates of $[u,u]=0$ now give
\[
u_4=u_6=0,\qquad u_1u_7+u_3u_5=0.
\]
Next, the $e_5,e_7$ coordinates of $[u,v]+[v,u]=0$ give
\begin{equation}\label{eq:aut-generator-rel}
u_1v_4+u_3v_2=0,\qquad2u_1v_6+u_3v_4+u_5v_2=0.
\end{equation}
The relation $[\varphi(e_3),u]=[[v,u],u]=0$ has $e_8$ coordinate
\[
6u_1^2v_6+6u_1u_3v_4+3u_3^2v_2=0.
\]
Substituting the first equality of~\eqref{eq:aut-generator-rel} yields $2u_1^2v_6=u_3^2v_2$. The second equality of~\eqref{eq:aut-generator-rel} therefore
reduces to $u_5v_2=0$. Hence $u_5=0$ and then $u_7=0$. The $e_8$ coordinate of $[u,v]+[v,u]=0$
consequently reduces to
\[
3u_1v_7=-u_3(3v_5+v_6).
\]
Finally, compute the images of $e_5,e_6$ from the generating products above. The $e_7$ coordinate of
\[
[\varphi(e_5),v]-[\varphi(e_6),v]=0
\]
is $v_2^3(u_1-3v_1-v_2)$. Therefore $u_1=3v_1+v_2$.

Set
\[
a=u_1,\quad b=v_2,\quad\alpha=\frac{u_3}{u_1},\quad\beta=v_3,\quad\gamma=v_5,\quad\mu=u_8,\quad\nu=v_8.
\]
The preceding equations give exactly the first two lines of~\eqref{eq:aut-eight-family}. Recursively applying the generating
products gives all its remaining lines.

Conversely, substitute~\eqref{eq:aut-eight-family} into~\eqref{eq:aut-coordinate}. For each pair $i,j$ one obtains
\[
[\varphi(e_i),\varphi(e_j)]=\varphi([e_i,e_j]).
\]
This is a direct polynomial verification: the nonzero right sides are the fourteen products of~\eqref{eq:min-A8},
and the other fifty right sides are zero. For example, the coordinate formula gives $[\varphi(e_2),\varphi(e_1)]=
abe_3=\varphi(e_3)$ and $[\varphi(e_5),\varphi(e_3)]=3a^2b^3e_8=3\varphi(e_8)$; applying the same formula to the other pairs
gives the indicated products and zero for every omitted product. Thus bilinearity proves product
preservation on the whole algebra. The matrix is block lower triangular with blocks indexed by
$(e_1,e_2),e_3,e_4,(e_5,e_6),e_7,e_8$. Their diagonal entries are
$a,\ b,\ ab,\ b^2,\ ab^2,\ b^3,\ ab^3,\ a^2b^3.$
Their product is $a^6b^{15}\ne0$, so $\varphi$ is invertible. The parameter definitions also prove uniqueness.
\end{proof}

\subsection{Polynomial identities for the automorphism action}
We next record the identities that
detect the nonlinear compatibility conditions between the columns of an automorphism. A polynomial
$F$ below is a semi-invariant: its value after applying $\varphi$ is multiplied by a scalar depending
only on the parameters $a,b$.
\begin{lemma}\label{lem:aut-invariants}
For $x=\sum_{i=1}^8x_ie_i$, define
\[
\begin{aligned}
\ell(x)&=3x_1+x_2,\qquad Q(x)=2x_2x_6-x_4^2,\\
P(x)&=-x_2x_4^3-3x_2x_3x_4^2+3x_2^2x_4x_6+3x_2^2x_4x_5\\
&\quad+6x_2^2x_3x_6-3x_2^3x_7-3x_1x_4^3+6x_1x_2x_4x_6.
\end{aligned}
\]
Then every automorphism in~\eqref{eq:aut-eight-family} satisfies
\begin{equation}\label{eq:aut-PQ}
\ell(\varphi x)=a\ell(x),\quad(\varphi x)_2=bx_2,\quad Q(\varphi x)=b^4Q(x),\quad P(\varphi x)=ab^6P(x).
\end{equation}
On $A_8^2$, put
\[
K(x)=x_6^3+6x_5x_6^2+9x_5^2x_6-3x_4x_6x_7-9x_4x_5x_7+3x_4^2x_8.
\]
For $x\in A_8^2$,
\begin{equation}\label{eq:aut-K}
(\varphi x)_3=abx_3,\qquad(\varphi x)_4=b^2x_4,\qquad K(\varphi x)=a^2b^7K(x).
\end{equation}
\end{lemma}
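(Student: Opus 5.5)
Since each identity is polynomial in the coordinates and in the parameters $a,b,\alpha,\beta,\gamma,\mu,\nu$, the plan is to verify them by direct substitution, using the structure of $\Aut(A_8)$ to limit the work. First I would write the coordinates of $\varphi x$ from~\eqref{eq:aut-eight-family} as linear forms in the $x_i$. The $e_1$ and $e_2$ coordinates only involve $\varphi(e_1),\varphi(e_2)$, giving $(\varphi x)_1=ax_1+\tfrac{a-b}{3}x_2$ and $(\varphi x)_2=bx_2$. The first two identities in~\eqref{eq:aut-PQ} follow at once: $3(\varphi x)_1+(\varphi x)_2=3ax_1+(a-b)x_2+bx_2=a\ell(x)$. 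On $A_8^2$, $(\varphi x)_3=abx_3$ and $(\varphi x)_4=b^2x_4$ also follow directly, since $e_3$ and $e_4$ occur only in the images of $e_1,e_2$ (zero on $A_8^2$) and of $e_3,e_4$ themselves.

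For $Q$, $P$ and $K$, I would avoid expanding with all seven parameters simultaneously. The plan is to factor $\varphi$ as a product of simpler automorphisms and check each identity on a generating set. The family~\eqref{eq:aut-eight-family} is a group, and the identities are multiplicative. If $F(\psi x)=\chi(\psi)F(x)$ for each generator $\psi$, with $\chi$ a character depending only on $(a,b)$, the identity then holds for all compositions. The natural generators are:
\begin{itemize}
\item the torus $\alpha=\beta=\gamma=\mu=\nu=0$ with $a=b$ (diagonal), together with the map with parameters $(a,b)$ and the off-diagonal $e_1,e_5$ terms forced by $\tfrac{a-b}{3}$;
\item the one-parameter unipotent subgroups obtained by varying one of $\alpha,\beta,\gamma,\mu,\nu$ with $a=b=1$.
\end{itemize}
Their products give every $\varphi$, since the parameters are unique and the group is connected and solvable. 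For the $\mu,\nu$ shears only $x_8$ changes, by multiples of $x_1,x_2$, and $P$ and $Q$ do not involve $x_8$. On $A_8^2$ they act trivially, so $K$ is unaffected.

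For the diagonal part, I would check homogeneity with respect to the weights. In the diagonal automorphism with $a=b$ (and more generally for the $(a,b)$-torus after absorbing the $e_1/e_5$ corrections), $e_i$ scales by the diagonal entries $a,b,ab,b^2,ab^2,b^3,ab^3,a^2b^3$. Each monomial of $Q$ then scales by $b^4$, and each monomial of $K$ by $a^2b^7$. For example, $x_6^3\mapsto b^9$? I would recheck this against the weights, because $x_i$ transforms contragrediently on coordinates: the coordinate $x_i$ of $\varphi x$ picks up the weight of $e_i$ times $x_i$, so $x_6^3\mapsto b^9x_6^3$. That is inconsistent with $a^2b^7$. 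So the off-diagonal $(e_5,e_6)$ block and the $e_5$-component of $\varphi(e_6)$, namely $\tfrac{ab^2-b^3}{3}$, must be included. In effect, the "torus" should be taken as the $a=b$ one plus a unipotent correction, and the genuine check is over all of~\eqref{eq:aut-eight-family}.

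The main obstacle is the unipotent parameters $\alpha,\beta,\gamma$, together with the $(a-b)$ corrections, acting on $P$ and $K$. For these I would compute $\varphi x$ for each one-parameter subgroup separately, keeping only coordinates $1$–$7$ for $P$ and $3$–$8$ for $K$. I would expand and verify the invariance coefficient by coefficient, organizing terms by degree in the parameter: the linear terms must cancel, which is essentially the statement that $F$ is annihilated by the corresponding nilpotent derivation in~\eqref{eq:min-der-family}, and the higher terms then follow. Indeed, the cleanest route is to reduce everything to the infinitesimal statement. One checks $D\cdot F=\lambda(a,b)F$ for each derivation in~\eqref{eq:min-der-family} acting as a vector field, where $\lambda$ is linear in $a,b$ only. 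This is a finite linear check. Then integrate, using that $\Aut(A_8)$ is connected, since $(a,b)\in(\C^*)^2$ and the rest is affine, and is generated by $\exp$ of derivations. Finally, the scalar characters are fixed by evaluating at a diagonal element.
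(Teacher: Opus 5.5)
Your final argument is correct, but it takes a different route from the paper's. The paper proves the identities by direct substitution of the closed-form coordinates of $\varphi x$ from \eqref{eq:aut-eight-family}:
\begin{itemize}
\item $Q$ is checked from $(\varphi x)_4$ and $(\varphi x)_6$;
\item $P$ is checked after rewriting it as $(\ell x_4+3x_2x_3)Q+x_2^2x_4(x_6+3x_5)-3x_2^3x_7$, so the $\ell$ and $Q$ identities do most of the work;
\item $K$ is checked by expanding the last five coordinates.
\end{itemize}
You instead prove infinitesimal semi-invariance, $\sum_i\partial_iF(x)\,(Dx)_i=\lambda(D)F(x)$ for all $D$ in \eqref{eq:min-der-family}. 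You then integrate along $t\mapsto F(e^{tD}x)$. This uses that $\Aut(A_8)$ is connected, being parametrized by $(\C^*)^2\times\C^5$, and hence is generated by $\exp\Der(A_8)$.

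The linear check does close up. In the derivation parameters one gets $\lambda=4(a-3b)$ for $Q$, $a+6(a-3b)$ for $P$, and $2a+7(a-3b)$ for $K$, with $p,q,r,s,t$ cancelling. For $K$ it helps to write $K=x_6m^2-3x_4x_7m+3x_4^2x_8$ with $m=x_6+3x_5$, since on $A_8^2$ one has $D\cdot m=(3a-6b)m+(3q-p)x_4$.

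Fix the characters from these $\lambda$ together with $\ell\circ e^{D}=e^{a}\ell$ and $x_2\circ e^{D}=e^{a-3b}x_2$. These give exactly $b^4$, $ab^6$, $a^2b^7$ on exponentials, hence on the whole group. Evaluating at a diagonal automorphism would not be enough: diagonal automorphisms have $a=b$, so they cannot separate, for example, $a^2b^7$ from $a^9$.

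Delete the abandoned first plan. Besides the weight slip you caught, the single-parameter slices are not all subgroups: composing two $\gamma$-maps gives a map with $\nu=3\gamma\gamma'$.

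The trade-off between the two routes is as follows. The paper's route is elementary and needs no Lie theory, but it depends on every nonlinear entry of \eqref{eq:aut-eight-family}, including terms cubic in $\alpha$. Yours uses only the linear family \eqref{eq:min-der-family} and explains why $\ell,x_2,Q,P,K$ are semi-invariants. It is also insensitive to slips in higher-order entries, which matters here.

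The printed $e_7$-coefficient of $\varphi(e_2)$ should be $-\alpha\gamma-\frac{\alpha^3b}{6}$. This is forced by the relation $3u_1v_7=-u_3(3v_5+v_6)$ in the proof of Proposition~\ref{prop:aut-eight}. It is also what the printed $\varphi(e_3)=abe_3$ and the printed $e_8$-entry of $\varphi(e_4)$ require. With the printed sign, literal substitution gives $P(\varphi e_2)=-\alpha^3b^4\neq0=ab^6P(e_2)$. Your argument proves the identities for the genuine automorphisms regardless.
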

\begin{proof}
The first two identities follow immediately from the first two rows of the matrix in~\eqref{eq:aut-eight-family}.
To check $Q$, note that
\[
(\varphi x)_4=b^2x_4-\alpha bx_2,\qquad(\varphi x)_6=b^3x_6-\alpha b^2x_4+\frac{\alpha^2b}{2}x_2.
\]
Therefore
\[
2bx_2\left(b^3x_6-\alpha b^2x_4+\frac{\alpha^2b}{2}x_2\right)-(b^2x_4-\alpha bx_2)^2=b^4Q(x).
\]
For a shorter verification of $P$, rewrite it as
\[
P(x)=(\ell(x)x_4+3x_2x_3)Q(x)+x_2^2x_4(x_6+3x_5)-3x_2^3x_7.
\]
Here the additional coordinates needed are
\[
\begin{aligned}
(\varphi x)_3&=a\alpha x_1+\beta x_2+abx_3,\\
(\varphi x)_5&=\gamma x_2+b\left[\beta+\frac{\alpha(b-a)}{3}\right]x_4+ab^2x_5+\frac{ab^2-b^3}{3}x_6,\\
(\varphi x)_7&=\left(-\alpha\gamma+\frac{\alpha^3b}{6}\right)x_2
+\left(\frac{a\alpha^2b}{3}-\alpha b\beta+b\gamma+\frac{\alpha^2b^2}{6}\right)x_4\\
&\quad-\alpha ab^2x_5+(2b^2\beta-\alpha ab^2)x_6+ab^3x_7.
\end{aligned}
\]
Substitution in the rewritten expression cancels the terms involving $\alpha,\beta,\gamma$ and leaves $ab^6P(x)$.

For $x\in A_8^2$, the first two coordinates vanish. The identities for the third and fourth coordinates
follow from the displayed columns of $\varphi$. Substituting the last five coordinates of~\eqref{eq:aut-eight-family} in $K$ and
collecting terms gives
\[
K(\varphi x)=a^2b^7(x_6^3+6x_5x_6^2+9x_5^2x_6-3x_4x_6x_7-9x_4x_5x_7+3x_4^2x_8).
\]
These substitutions establish all the asserted polynomial identities for arbitrary complex parameters
with $ab\ne0$.
\end{proof}

\subsection{Rigidity of local automorphisms}
The next theorem uses only the explicitly computed
automorphisms and their polynomial identities. The details below exhibit the equations
forcing a local automorphism to preserve all products.
\begin{theorem}\label{thm:aut-rigid-eight}
Every local automorphism of $A_8$ is an automorphism. Thus
\[
\LocAut(A_8)=\Aut(A_8).
\]
\end{theorem}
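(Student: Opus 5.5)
Let $\Phi$ be a local automorphism of $A_8$. The plan mirrors the rigidity argument for local derivations in Proposition~\ref{prop:min-A8}: first normalize $\Phi$ by composing with a global automorphism, then extract coordinate constraints from witnesses at special vectors, and finally use the semi-invariants of Lemma~\ref{lem:aut-invariants} at generic vectors to force the remaining columns into the shape~\eqref{eq:aut-eight-family}.

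\emph{Step 1 (reduction).} Composition with an automorphism preserves locality: if $\Phi$ is local with witnesses $A_x$, then so is $\psi^{-1}\Phi$, with witnesses $\psi^{-1}A_x$. Take the witness $\psi=A_{e_1+e_2}$ and replace $\Phi$ by $\psi^{-1}\Phi$. Then $\Phi(e_1+e_2)=e_1+e_2$. Next apply locality at $e_1$. By~\eqref{eq:aut-eight-family}, $\Phi(e_1)=a_1e_1+a_1\alpha_1e_3+\mu_1e_8$, so $\Phi(e_2)=(1-a_1)e_1+e_2-a_1\alpha_1e_3-\mu_1e_8$. Comparing this with the admissible form of $\varphi(e_2)$ at the vector $e_2$ forces $b=1$, $(a-1)/3=1-a_1$ and further relations. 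The invariants $\ell$ and $x_2$ in~\eqref{eq:aut-PQ} should pin down the scalars. For instance, applying $\ell(\Phi x)=a_x\ell(x)$ and $(\Phi x)_2=b_xx_2$ at $x=e_1-3e_2$, where $\ell=0$, gives $\ell(\Phi(e_1-3e_2))=0$. This yields $a_1=1$, in analogy with the derivation case. Locality at $e_3,\dots,e_8$ then gives the triangular shapes of the columns $\Phi(e_i)$, since each $\varphi(e_i)$ lies in the span listed in~\eqref{eq:aut-eight-family}.

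\emph{Step 2 (generic vectors).} For $x$ in the dense open set where $x_2\ne0$, $\ell(x)\ne0$, $Q(x)\ne0$ and $P(x)\ne0$, the witness parameters satisfy $a_x=\ell(\Phi x)/\ell(x)$ and $b_x=(\Phi x)_2/x_2$. After Step~1 the first two coordinates of $\Phi$ are linear, and we expect $\ell(\Phi x)=\ell(x)$ and $(\Phi x)_2=x_2$, so $a_x=b_x=1$. Then \eqref{eq:aut-PQ} gives the polynomial identities $Q(\Phi x)=Q(x)$ and $P(\Phi x)=P(x)$. Both hold on a dense set, hence identically. Comparing coefficients of monomials, in the same way as with~\eqref{eq:min-poly}, should kill the non-automorphism entries in the columns $\Phi(e_3),\dots,\Phi(e_7)$ involving $x_3,\dots,x_7$. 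It should also force the off-diagonal entries to take the specific quadratic and cubic relations in $\alpha,\beta,\gamma$ that appear in~\eqref{eq:aut-eight-family}.

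\emph{Step 3 (the derived algebra).} For vectors in $A_8^2$, the invariants $x_3$, $x_4$ and $K$ of~\eqref{eq:aut-K} play the same role. Take $x\in A_8^2$ with $x_3x_4\ne0$; then $a_xb_x$ and $b_x^2$ are determined by $\Phi$. The identity $K(\Phi x)=a_x^2b_x^7K(x)$ then constrains the $e_8$-coefficients $\Phi(e_5)_8,\Phi(e_6)_8,\Phi(e_7)_8$ and $\Phi(e_8)_8$, much as the vectors $w_i=e_3+e_4+e_i$ did in the derivation proof. Once every column agrees with a single member of~\eqref{eq:aut-eight-family}, $\Phi$ is an automorphism, and composing back with $\psi$ finishes the proof.

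The main obstacle is Step~2. The identity $P(\Phi x)=P(x)$ is quartic and mixes many unknown column entries, and the sign ambiguity in $b_x$ (only $b_x^2$ or $b_x$ through $x_2$) must be controlled. I would first exploit $Q$ alone, as was done with $x_2y_6-x_4y_4$ in the derivation proof, to kill the $e_4$ and $e_6$ entries. I would then substitute into $P$ and read off coefficients of well-chosen monomials such as $x_1x_4^3$, $x_3x_4^2x_2$ and $x_2^3x_7$.
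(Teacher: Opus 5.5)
Your proposal is correct and follows the paper's proof essentially step for step: the same normalization $\Phi(e_1+e_2)=e_1+e_2$, the test vector $e_1-3e_2$ to get $a=1$, the identities $Q(\Phi x)=Q(x)$ and $P(\Phi x)=P(x)$ on the set where $x_2\ell(x)\ne0$ to remove the noncentral entries, and $K$ on $A_8^2$ for the $e_8$-entries, with your coefficient comparison (valid by density) in place of the paper's evaluations at specific test vectors. The sign ambiguity actually arises in Step~3, not Step~2 (in Step~2 you have $a_x=b_x=1$ exactly): on $A_8^2$ you only know $a_xb_x=1$ and $b_x^2=1$, so $a_x^2b_x^7=b_x=\pm1$, and the paper removes the sign by working with $K(\Phi x)^2=K(x)^2$.
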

\begin{proof}
Let $T$ be a local automorphism. Choose an automorphism $\varphi_0$ with $T(e_1+e_2)=
\varphi_0(e_1+e_2)$. Replacing $T$ by $\varphi_0^{-1}T$ preserves locality, because at each $x$ a witness $\varphi_x$ is replaced by
$\varphi_0^{-1}\varphi_x$. It is therefore enough to treat the normalization
\[
T(e_1+e_2)=e_1+e_2.
\]
\emph{The generator columns.} Locality at $e_1$ gives $T(e_1)=ae_1+a\alpha e_3+\mu e_8$, so linearity determines
$T(e_2)=e_1+e_2-T(e_1)$. Every automorphism sends $e_1-3e_2$ to a vector with first two coordinates
$(b,-3b)$. On the other hand those coordinates of $T(e_1-3e_2)$ are $(4a-3,-3)$. Comparing gives
$b=1$ for this witness and then $a=1$. Hence, for some $c,d\in\C$,
\[
T(e_1)=e_1+ce_3+de_8,\qquad T(e_2)=e_2-ce_3-de_8.
\]
\emph{The remaining column shapes.} Applying locality separately at $e_3,\ldots,e_8$ and using Proposition~\ref{prop:aut-eight}, write
\[
\begin{aligned}
T(e_3)&=fe_3,\\
T(e_4)&=te_4+ue_5+ve_6+we_7+z_4e_8,\\
T(e_5)&=me_5+ne_7+z_5e_8,\\
T(e_6)&=re_5+se_6+je_7+z_6e_8,\\
T(e_7)&=le_7+z_7e_8,\qquad T(e_8)=s_8e_8.
\end{aligned}
\]
In particular, $T$ fixes the first two coordinates of every vector. If $x_2\ne0$ and $3x_1+x_2\ne0$, a witness
$T(x)=\varphi_x(x)$ must have $a=b=1$, by the first two identities of~\eqref{eq:aut-PQ}. Consequently
\begin{equation}\label{eq:aut-local-PQ}
Q(Tx)=Q(x),\qquad P(Tx)=P(x)
\end{equation}
for every such $x$. All test vectors used in the following paragraph satisfy these two nonvanishing conditions.

\emph{Removing the noncentral errors.} The $Q$ identity at $e_2+e_6$ gives $s=1$. At $e_2+e_4$ and $e_2-e_4$ it
gives respectively
\[
2v-t^2=-1,\qquad-2v-t^2=-1.
\]
Thus $v=0$ and $t^2=1$. The differences $P(Tx)-P(x)$ at $e_2+e_6$ and $2e_1+e_2+e_6$ are
\[
-6c-3j=0,\qquad6c-3j=0,
\]
so $c=j=0$. With these simplifications, the same differences at $e_2+e_4$, $e_2-e_4$, and $e_2+2e_4$ give
\[
\begin{aligned}
1-t^3+3tu-3w&=0,\\
-1+t^3+3tu+3w&=0,\\
8-8t^3+12tu-6w&=0.
\end{aligned}
\]
Adding the first two yields $6tu=0$; since $t^2=1$, we have $u=0$. The first and third equations then
give $w=0$ and $t^3=1$. Together with $t^2=1$ this forces $t=1$.

The final five evaluations of $P(Tx)-P(x)$ are
\[
\begin{array}{c|c}
x&P(Tx)-P(x)\\\hline
e_2+e_3+e_4&3(1-f)\\
e_2+e_5&-3n\\
e_2+e_4+e_5&3(m-n-1)\\
e_2+e_4+e_6&3r\\
e_2+e_7&3(1-l).
\end{array}
\]
They all vanish. Hence $f=m=l=1$ and $n=r=0$. We have proved
\[
T(e_3)=e_3,\qquad T(e_i)=e_i+z_ie_8\ (4\le i\le7),\qquad T(e_8)=s_8e_8.
\]
\emph{Removing the central errors.} For $x\in A_8^2$ with $x_3x_4\ne0$, comparison of coordinates three and
four in $T(x)=\varphi_x(x)$ gives $ab=1$ and $b^2=1$. Thus $(a^2b^7)^2=1$, and~\eqref{eq:aut-K} implies
\begin{equation}\label{eq:aut-K-local}
K(Tx)^2=K(x)^2.
\end{equation}
For all $x\in A_8^2$ and $h\in\C$,
\[
K(x+he_8)=K(x)+3x_4^2h.
\]
At $x=e_3+e_4$, the value $K(x)$ is zero and~\eqref{eq:aut-K-local} gives $(3z_4)^2=0$, so $z_4=0$. Applying it at
$e_3+e_4+e_5$ and $e_3+e_4+e_7$, which also have $K(x)=0$, gives successively $z_5=z_7=0$. At
$e_3+e_4+e_6$ and $e_3+e_4+2e_6$ the same identity gives
\[
(1+3z_6)^2=1,\qquad(8+6z_6)^2=64.
\]
The first equation gives $z_6\in\{0,-\tfrac23\}$ and the second gives $z_6\in\{0,-\tfrac83\}$. Thus $z_6=0$. Finally, the
vectors $e_3+e_4+e_8$ and $e_3+e_4+e_6+e_8$ give
\[
9s_8^2=9,\qquad(1+3s_8)^2=16.
\]
The first allows $s_8=\pm1$, and substitution in the second leaves only $s_8=1$.

We conclude that the normalized map fixes $e_3,\ldots,e_8$ and sends
\[
e_1\longmapsto e_1+de_8,\qquad e_2\longmapsto e_2-de_8.
\]
It is precisely~\eqref{eq:aut-eight-family} with $(a,b,\alpha,\beta,\gamma,\mu,\nu)=(1,1,0,0,0,d,-d)$, so it is an automorphism. Undoing
the normalization proves that the original $T$ is an automorphism. The reverse inclusion is immediate
from the definition of locality.
\end{proof}

\subsection{Minimal dimension and minimal nilpotency class}
We can now combine the constructive
lower bounds and the explicit eight-dimensional example. The two minimality statements concern
all non-abelian nilpotent complex right Leibniz algebras.
\begin{theorem}\label{thm:aut-minima}
The least dimension of such an algebra $A$ satisfying $\LocAut(A)=\Aut(A)$ is eight.
The least possible nilpotency class of such an algebra is five. Both minima are attained by $A_8$.
\end{theorem}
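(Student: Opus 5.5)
The statement combines the existence results of Theorem~\ref{thm:aut-small} with the rigidity result of Theorem~\ref{thm:aut-rigid-eight}. The plan therefore has two parts: a lower bound and attainment.

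\emph{Lower bound on the dimension.} Let $A$ be a non-abelian nilpotent complex right Leibniz algebra with $\dim A\le7$. Theorem~\ref{thm:aut-small} gives a proper local automorphism, so $\Aut(A)\subsetneq\LocAut(A)$. Hence any non-abelian $A$ with $\LocAut(A)=\Aut(A)$ has $\dim A\ge8$.

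\emph{Lower bound on the class.} Let $A$ have class at most four and any finite dimension. The second sentence of Theorem~\ref{thm:aut-small} again supplies a proper local automorphism. That sentence relies on Lemma~\ref{lem:aut-projection} applied to the words of right multiplications from Proposition~\ref{prop:min-smallclass} and Lemma~\ref{lem:min-split}. Since $A$ is non-abelian, its class is at least two, so classes two, three and four are all excluded. Any non-abelian algebra with the equality therefore has class at least five.

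\emph{Attainment.} By Proposition~\ref{prop:min-A8}, $A_8$ is a non-abelian nilpotent right Leibniz algebra of dimension eight and class five. Its lower central series computed there has $A_8^5=\langle e_8\rangle\ne0$ and $A_8^6=0$. Theorem~\ref{thm:aut-rigid-eight} gives $\LocAut(A_8)=\Aut(A_8)$. Thus both lower bounds are attained, and by the same algebra.

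There is no genuine obstacle in this proof; it is a bookkeeping combination of earlier results. The only care needed is to confirm two points. First, the class statement of Theorem~\ref{thm:aut-small} holds for arbitrary dimension. Second, the class-two case, handled separately inside Lemma~\ref{lem:aut-projection} via $T=\Id+3\pi$, is genuinely included, so that every class from two to four is covered.
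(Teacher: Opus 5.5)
Your proposal is correct and follows the same route as the paper. Theorem~\ref{thm:aut-small} rules out non-abelian algebras of dimension at most seven and of class at most four, and Proposition~\ref{prop:min-A8} together with Theorem~\ref{thm:aut-rigid-eight} shows that $A_8$, of dimension eight and class five, attains both bounds.
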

\begin{proof}
Theorem~\ref{thm:aut-small} constructs a proper local automorphism for every algebra of dimension at most
seven. Hence rigidity cannot occur in those dimensions. The algebra $A_8$ has dimension eight, and
Theorem~\ref{thm:aut-rigid-eight} proves its rigidity. This proves the dimension assertion.

Theorem~\ref{thm:aut-small} also excludes rigidity in classes at most four, independently of dimension. Proposition~\ref{prop:min-A8}
computes the class of $A_8$ to be five. Thus the smallest class is five, attained by the same
algebra.
\end{proof}
\begin{corollary}
The single eight-dimensional non-Lie algebra $A_8$ satisfies simultaneously
\[
\LocDer(A_8)=\Der(A_8),\qquad\LocAut(A_8)=\Aut(A_8).
\]
For each equality, eight is the smallest possible dimension and five is the smallest possible nilpotency
class among non-abelian nilpotent complex right Leibniz algebras.
\end{corollary}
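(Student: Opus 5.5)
The corollary is essentially a bookkeeping assembly of results already proved, so I would prove it by citing them in two parts: one part for the single algebra $A_8$, and one part for the two minimality claims.

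\emph{Simultaneous equalities.} First I would record that $A_8$ is non-Lie, non-abelian and nilpotent of class five. This is part of Proposition~\ref{prop:min-A8}, witnessed by the nonzero square $[e_2,e_2]=e_4$ and the computed lower central series. The equality $\LocDer(A_8)=\Der(A_8)$ is then exactly Proposition~\ref{prop:min-A8}. The equality $\LocAut(A_8)=\Aut(A_8)$ is exactly Theorem~\ref{thm:aut-rigid-eight}. No further argument is needed, since both results refer to the same algebra with the multiplication table \eqref{eq:min-A8}.

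\emph{Minimality for derivations.} Theorem~\ref{thm:min-small} produces a proper local derivation in every non-abelian nilpotent algebra of dimension at most seven. Proposition~\ref{prop:min-smallclass} does the same for class at most four, in every dimension. Hence the equality $\LocDer(A)=\Der(A)$ forces $\dim A\ge 8$ and class at least five. The algebra $A_8$ attains both bounds, and this is the content of Theorem~\ref{thm:min-eight}.

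\emph{Minimality for automorphisms.} Theorem~\ref{thm:aut-small} gives the analogous exclusions: it supplies a proper local automorphism whenever $\dim A\le 7$ or the class is at most four. The same algebra $A_8$ attains both bounds, as recorded in Theorem~\ref{thm:aut-minima}.

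There is no genuine obstacle here. The only point needing care is the phrase ``smallest possible nilpotency class''. The class bound must be independent of dimension, and this is guaranteed because Proposition~\ref{prop:min-smallclass} and Theorem~\ref{thm:aut-small} impose no restriction on dimension in the class-at-most-four case. I would also remark that the same algebra realizes both minima for both equalities simultaneously, which is what ``single'' in the statement asserts.
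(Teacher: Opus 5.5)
Your proposal is correct and follows the paper's proof, which simply combines Theorems~\ref{thm:min-eight} and~\ref{thm:aut-minima}. You unpack those two theorems into the results they rest on: Proposition~\ref{prop:min-A8}, Theorem~\ref{thm:aut-rigid-eight}, Theorem~\ref{thm:min-small}, Proposition~\ref{prop:min-smallclass} and Theorem~\ref{thm:aut-small}, and you correctly note that the class bound holds in every dimension.
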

\begin{proof}
Combine Theorems~\ref{thm:min-eight} and~\ref{thm:aut-minima}.
\end{proof}
\begin{remark}
The results concern local maps. They neither classify all rigid right Leibniz
algebras in larger dimensions nor make an assertion about potentially nonlinear $2$-local maps. The
two rigidity equalities for $A_8$ have been proved separately; no implication between them for arbitrary
algebras is used.
\end{remark}

\end{document}